\setlist[itemize]{itemsep=0mm} 
\numberwithin{equation}{section} 
\titleformat*{\section}{\Large \scshape\center} 
\titleformat*{\subsection}{\fontsize{14}{14} \sffamily} 
\theoremstyle{plain}
\newtheorem{theorem}{Theorem}[section]
\newtheorem*{theorem*}{Theorem} 
\newtheorem{lemma}[theorem]{Lemma}
\newtheorem{proposition}[theorem]{Proposition}
\newtheorem{corollary}[theorem]{Corollary}
\theoremstyle{definition}
\newtheorem{definition}[theorem]{Definition}
\newtheorem{example}[theorem]{Example}
\newcommand{\Addresses}{{
  \bigskip
  \footnotesize

  \textsc{Department of Mathematical Sciences, Norwegian University of Science and Technology,\\ 7491 Trondheim, Norway.}\par\nopagebreak
  \textit{E-mail addresses}: \texttt{franz.luef@ntnu.no} and  \texttt{eirik.berge@ntnu.no}
}}
\theoremstyle{remark}
\newtheorem*{remark}{Remark}
\begin{document}
\pagenumbering{gobble}
\title{\Huge{A Large Scale Approach to Decomposition Spaces}}
\author{Eirik Berge and Franz Luef}
\date{}
\maketitle

\pagenumbering{arabic}

\begin{abstract}
Decomposition spaces are a class of function spaces constructed out of ``well-behaved" coverings and partitions of unity of a set. The structure of the covering of the set determines the properties of the decomposition space. Besov spaces, shearlet spaces and modulation spaces are well-known decomposition spaces. In this paper we focus on the geometric aspects of decomposition spaces and utilize that these are naturally captured by the large scale properties of a metric space, the covered space, associated to a covering of a set. We demonstrate that decomposition spaces constructed out of quasi-isometric covered spaces have many geometric features in common. The notion of geometric embedding is introduced to formalize the way one decomposition space can be embedded into another decomposition space while respecting the geometric features of the coverings. Some consequences of the large scale approach to decomposition spaces are (i) comparison of coverings of different sets, (ii) study of embeddings of decomposition spaces based on the geometric features and the symmetries of the coverings and (iii) the use of notions from large scale geometry, such as asymptotic dimension or hyperbolicity, to study the properties of decomposition spaces. 

We draw some consequences of our general investigations for the modulation spaces and for a new class of decomposition spaces based on the special linear group of the Euclidean plane. These results are based on considerations of the large scale properties of stratified Lie groups, locally compact groups and Euclidean spaces, where we utilize the growth type of a group as a large scale invariant. 

\end{abstract}

\section{Introduction}

Large scale geometry has its origins in the seminal work of Gromov in \cite{Gromov_Hyperbolic, Gromov} and has led to substantial progress in group theory, operator algebras and geometry. In this paper we add another item to the long list of applications of large scale geometry: the theory of function spaces, in particular the decomposition spaces of Feichtinger and Gr\"obner \cite{Hans_Grobner,Hans_2}. The link between decomposition spaces and coarse geometry has also been pointed out in the Ph.D. thesis of Koch \cite{ThesisRene}.

Several function spaces in time-frequency analysis and harmonic analysis possess a description through a geometric decomposition of the domain space. These spaces are referred to as \textit{decomposition spaces} and contain among them Besov spaces and modulation spaces. Since the inception of decomposition spaces in \cite{Hans_Grobner}, a fundamental question has been to decide whether one decomposition space embeds into another decomposition space. These investigations have mostly been considered when the two decomposition spaces in question consist of functions/distributions on the same underlying space; an exception is the tour de force paper \cite{Felix_main} where many results treat the case where the underlying spaces are different open subsets of the same ambient Euclidean space with non-empty intersection. We will investigate embeddings of a geometric nature between decomposition spaces defined on different sets by utilizing methods from large scale geometry. \par
Let us briefly sketch the construction of decomposition spaces, see Section \ref{Chapter_Decomposition_Spaces}. Let $\mathcal{Q} = (Q_i)_{i \in I}$ be a well-behaved covering on a set $X$ and consider a partition of unity $\Phi = (\varphi_i)_{i \in I}$ subordinate to the covering $\mathcal{Q}$. Decomposition spaces consist of functions $f:X \to \mathbb{C}$ that have nice local behaviour with respect to the partition $\Phi$ measured in terms of a Banach space $(B,\|\cdot\|_B)$: This local information is encoded in the sequence \[f_i := \|f \cdot \varphi_{i}\|_{B}, \qquad i \in I.\] 
Furthermore we want to ensure global regularity of $f$, which we obtain by imposing the sequence $(f_i)_{i \in I}$ to be an element of a suitably chosen sequence space $(Y,\|\cdot\|_Y)$. Hence, the {\it decomposition space} $\mathcal{D}(\mathcal{Q},B,Y)$  is the space of functions such that the norm 
\begin{equation}
\label{decomposition norm}
    \|f\|_{\mathcal{D}(\mathcal{Q},B,Y)} := \|(f_i)_{i \in I}\|_{Y},
\end{equation}
is finite. \par
The way to relate decomposition spaces with large scale geometry is to associate to any well-behaved covering $\mathcal{Q}$ on $X$ a metric space $(X,d_{\mathcal{Q}})$. The metric $d_{\mathcal{Q}}(x,y)$ essentially counts the minimum number of borders of the sets $Q_i$ one need to cross when going from $x$ to $y$. The important features of the covering $\mathcal{Q}$ are encapsulated in the metric space structure of $(X,d_{\mathcal{Q}})$. 

Recall that a map $f:(X,d_X) \to (Z,d_Z)$ between metric spaces is called a \textit{quasi-isometric embedding} if there exist constants $L,C > 0$ such that \[ \frac{1}{L}d_{X}(x,y) - C \leq d_{Z}(f(x),f(y)) \leq Ld_{X}(x,y) + C,\] for all $x,y \in X$. Quasi-isometric embeddings are generalizations of isometric embeddings that allow the spaces to be locally different as long as they have the same global behavior. This leads us to consider maps $\phi:(X,\mathcal{Q}) \to (Z,\mathcal{P})$ between sets equipped with well-behaved coverings that are quasi-isometric embeddings with respect to the distances $d_{\mathcal{Q}}$ and $d_{\mathcal{P}}$. 

There is a standard notion of \textit{equivalence} between coverings $\mathcal{Q},\mathcal{P}$ on the same space $X$ present in the literature on decomposition spaces \cite{Labate2013, Hans_Grobner, Felix_main, Hartmut_Besov}. We give a new proof of Proposition \ref{generalization_of_equivalent_coverings} stating that the coverings $\mathcal{Q}$ and $\mathcal{P}$ are equivalent if and only if the identity map $Id_X:(X,d_{\mathcal{Q}}) \to (X,d_{\mathcal{P}})$ is a bijective quasi-isometric embedding, that is, a \textit{quasi-isometry}. The statement goes back to the paper \cite{Hans_Grobner} and has been recently proved in a special case in the Ph.D. thesis of Koch \cite{ThesisRene} where its purpose was to compare decomposition spaces with coarse geometric methods. This framework provides a natural extension of equivalent coverings to coverings defined on different sets. \par
The functorial way of associating the metric space $(X,d_{\mathcal{Q}})$ to the space $X$ equipped with the well-behaved covering $\mathcal{Q}$ allows us to consider quasi-isometric invariant properties of the covering $\mathcal{Q}$. In particular, we discuss the \textit{asymptotic dimension}, \textit{growth type}, and \textit{quasi-hyperbolicity} of a well-behaved covering. These properties will be used time and time again in later sections to simplify arguments already present in the literature. \par There is a canonical way of associating to a path-connected, locally compact group $G$ a covering $\mathcal{U}(G)$ reflecting the group operation introduced in \cite{Hans_2}. We will show in Theorem \ref{uniform_structure} that we can reduce the problem of understanding the covering $\mathcal{U}(G)$ to the study of the asymptotic dimension, the growth type, or the hyperbolicity of certain finitely generated subgroups of $G$. This is explored in more detail for stratified Lie groups in Proposition \ref{Carnot_group_result} and solvable groups in Proposition \ref{solvable_group_result}, where the finitely generated subgroups are respectively nilpotent and strongly polycyclic. For a stratified Lie group $G$ and a lattice $N$ in $G$, we establish in Theorem \ref{growth_vector_result} a correspondence between the growth type of the metric space $(N,d_{\mathcal{U}(G)})$ and the \textit{homogeneous dimension} of the stratified Lie group $G$. \par 
Consider two decomposition spaces $\mathcal{D}(\mathcal{Q},B_1,Y_1)$ and $\mathcal{D}(\mathcal{P},B_2,Y_2)$ related to the coverings $\mathcal{Q}$ and $\mathcal{P}$ on the locally compact spaces $X$ and $Z$, respectively. We will investigate the existence of Banach space embeddings $F:\mathcal{D}(\mathcal{Q},B_1,Y_1) \to \mathcal{D}(\mathcal{P},B_2,Y_2)$ that induce a quasi-isometric embedding between the metric spaces $(X,d_{\mathcal{Q}})$ and $(P,d_{\mathcal{P}})$. These embeddings are called \textit{geometric embeddings} and are introduced in Section \ref{sec: Geometric Embeddings}.
The rest of the paper is devoted to give criteria for when geometric embeddings exist and to examine examples. This relies crucially on the material developed in Section \ref{sec: From Admissible Coverings to the Large Scale Setting} and Section \ref{Chapter_Uniform_Metric_Spaces}. \par 
Two highlights are Proposition \ref{covering_invariance}, showing that geometric embeddings induce quasi-isometric embeddings of the underlying coverings, and Theorem \ref{quasi_implies_embeddings}, showing when quasi-isometries between the metric spaces $(X,d_{\mathcal{Q}})$ and $(Z,d_{\mathcal{P}})$ can induce geometric embeddings between the decomposition spaces $\mathcal{D}(\mathcal{Q},B_1,Y_1)$ and $\mathcal{D}(\mathcal{P},B_2,Y_2)$. \par
In the final section we look at geometric embeddings between well-known decomposition spaces such as the modulation spaces $M^{p,q}(\mathbb{R}^n)$. In Theorem \ref{modulation_space_result} we show that there is a tower of compatible geometric embeddings \[M^{p,q}(\mathbb{R}) \xrightarrow{\Gamma_{1}^{2}}M^{p,q}(\mathbb{R}^2) \xrightarrow{\Gamma_{2}^3} \dots \xrightarrow{\Gamma_{n-1}^{n}} M^{p,q}(\mathbb{R}^n) \xrightarrow{\Gamma_{n}^{n+1}} \dots, \] where there are no geometric embeddings in the other direction. Combining this result with \cite[Theorem 12.2.2]{TF_analysis} shows that there exists a geometric embedding from the Feichtinger algebra $\mathcal{S}_{0}(\mathbb{R}) := M^{1,1}(\mathbb{R})$ to any of the modulation spaces $M^{p,q}(\mathbb{R}^n)$. 
\par
Finally, we consider in Subsection \ref{sec:A Decomposition Space of Hyperbolic Type} the decomposition space \[\mathcal{D}^{p,q}\left(SL(2,\mathbb{R})\right) := \mathcal{D}(\mathcal{U}(SL(2,\mathbb{R})),L^{p},l^q)\] on the semisimple Lie group $SL(2,\mathbb{R})$. The associated metric space $(SL(2,\mathbb{R}),d_{\mathcal{U}})$ is quasi-hyperbolic by Proposition \ref{SL} and we show in Proposition \ref{final_result} that the decomposition space $\mathcal{D}^{p,q}\left(SL(2,\mathbb{R})\right)$ is radically different from the modulation spaces and Besov spaces. \par
In order to make this paper accessible for a broad audience we have included basic results and definitions from large scale geometry. These are given when they are needed rather than including them in an appendix since there are several excellent introductory texts available \cite{Large_Scale,Geometric_Group_Theory}. 
\subsection*{Acknowledgements}
The authors would like to express their gratitude to Hans Feichtinger and Felix Voigtlaender for insightful comments and suggestions. The first author was partially supported by the BFS/TFS project Pure Mathematics in Norway.

\section{From Admissible Coverings to the Large Scale Setting}
\label{sec: From Admissible Coverings to the Large Scale Setting}

\subsection{Covered Spaces and Associated Metric Spaces}

The first order of business is to associate a metric space to any sufficiently nice covering. Let $X$ be a non-empty set. A collection of non-empty subsets $\mathcal{Q} = (Q_i)_{i \in I}$ of $X$ is called an \textit{admissible covering} if it is a covering of $X$ such that \[N_{\mathcal{Q}} := \sup_{i \in I}|i^{*}| < \infty ,\qquad i^* := \left\{j \in I \, \Big| \, Q_{i} \cap Q_{j} \ne \emptyset \right\}.\] We will call the constant $N_{\mathcal{Q}}$ the \textit{admissibility constant} of the covering, while $i^*$ is called the \textit{neighbours} of the index $i$. Analogously, the sets $Q_j$ for $j \in i^*$ are called the \textit{neighbours} of the set $Q_i$. This notion can be inductively extended by $i^{k*} := (i^{(k-1)*})^*$ for $k \geq 2$. Moreover, the abbreviations
\begin{equation}
\label{neighbours}
    Q_{i}^{*} := \bigcup_{j \in i^{*}}Q_j, \qquad Q^{k*} := \left(Q^{(k-1)*}\right)^{*}, \quad k \geq 2
\end{equation} 
will be used to ease the notation. Note that $i \in j^{k*}$ if and only if $j \in i^{k*}$ for all $k \geq 1$. Other elementary properties of neighbours can be found in \cite[Lemma 2.1]{Hans_Grobner} \footnote{The reader should be aware that first statement in \cite[Lemma 2.1]{Hans_Grobner} is false.}. \par 
We call a sequence $Q_{i_1}, \dots, Q_{i_k} \in \mathcal{Q}$ with $x \in Q_{i_1}$ and $y \in Q_{i_k}$ a $\mathcal{Q}$-\textit{chain} from $x$ to $y$ of \textit{length} $k$ whenever $Q_{i_l} \cap Q_{i_{l+1}} \ne \emptyset$ for every $1 \leq l \leq k-1$. The notation $\mathcal{Q}(k,x,y)$ will be used to denote all $\mathcal{Q}$-chains of length $k$ from $x$ to $y$. We will need one additional assumption on admissible coverings so that we can associate to them metric spaces in a natural manner.

\begin{definition}
An admissible covering $\mathcal{Q}$ on a set $X$ will be called a \textit{concatenation} if for every pair of points $x,y \in X$ there exists a positive number $k \in \mathbb{N}$ such that $\mathcal{Q}(k,x,y) \ne \emptyset$. We will refer to the pair $(X,\mathcal{Q})$ as a \textit{covered space} whenever $\mathcal{Q}$ is a concatenation on $X$.
\end{definition}

The notion of a concatenation first appeared in \cite{Hans_Grobner} and is equivalent to the requirement that \[X = \bigcup_{k = 1}^{\infty}Q_{i}^{k*},\] for some (and hence all) $Q_i \in \mathcal{Q}$. 

\begin{definition}
Define the metric $d_{\mathcal{Q}}$ on the covered space $(X,\mathcal{Q})$ by the rule \[d_{\mathcal{Q}}(x,x) = 0, \quad d_{\mathcal{Q}}(x,y) = \inf \left\{k:\mathcal{Q}(k,x,y) \ne \emptyset \right\}, \qquad x,y \in X, \, \, x \ne y.\] The defining properties of a covered space ensure that $(X,d_{\mathcal{Q}})$ is a metric space. We will refer to $(X,d_{\mathcal{Q}})$ as the \textit{associated metric space} to the covered space $(X,\mathcal{Q})$.
\end{definition}

The metric space $(X,d_\mathcal{Q})$ was introduced in \cite{Hans_Grobner} together with a few basic properties. Notice that $(X,d_\mathcal{Q})$ is a uniformly discrete metric space since $d_{\mathcal{Q}}(x,y) \geq 1$ whenever $x$ and $y$ are distinct points. \par 
A common way of comparing two coverings on the same space is as follows: Let $X$ be a set equipped with two admissible coverings $\mathcal{Q} = (Q_i)_{i \in I}$ and $\mathcal{P} = (P_j)_{j \in J}$. We say that $\mathcal{Q}$ is \textit{almost subordinate} to $\mathcal{P}$ and write $\mathcal{Q} \leq \mathcal{P}$ if there exists a $k \in \mathbb{N}$ such that for every $i \in I$ there is a $j \in J$ with $Q_{i} \subset P_{j}^{k*}$. The coverings $\mathcal{Q}$ and $\mathcal{P}$ are said to be \textit{equivalent} if both $\mathcal{Q} \leq \mathcal{P}$ and $\mathcal{P} \leq \mathcal{Q}$ hold. It follows that any admissible covering $\mathcal{Q}$ on a set $X$ is equivalent to the covering $\mathcal{Q}^{k*} := \{Q_i^{k*} \, | \, i \in I\}$ for any $k \geq 1$. So far in the study of decomposition spaces, only coverings on the same set have been compared in the literature. \par

\begin{definition}
A \textit{(metric) net} in a metric space $(X,d_X)$ is a subset $N$ of $X$ such that there exists a constant $M > 0$ with \[\inf_{y \in N}d_{X}\left(x,y\right) \leq M,\] for every $x \in X$. 
A map $f:(X,d_X) \to (Z,d_Z)$ between metric spaces is called a \textit{quasi-isometric embedding} if there exist constants $L,C > 0$ such that \[ \frac{1}{L}d_{X}(x,y) - C \leq d_{Z}(f(x),f(y)) \leq Ld_{X}(x,y) + C,\] for all $x,y \in X$. The constants $L,C$ are called the \textit{parameters} of the quasi-isometric embedding. The map $f$ will be called a \textit{quasi-isometry} if it in addition satisfies that $f(X)$ is a net in $Z$. 
\end{definition}

The notation $(X,d_X) \simeq (Z,d_Z)$ indicates that there exists a quasi-isometry between the metric spaces $(X,d_X)$ and $(Z,d_Z)$. It is common to refer to the quasi-isometry class of a metric space as its \textit{large scale geometry}. A quasi-isometric embedding can have discontinuities; they are however controlled in a uniform manner. We can always choose the parameters $L$ and $C$ of a quasi-isometric embedding to be integers by enlarging them. Two maps $f,g:(X,d_X) \to (Z,d_Z)$ between metric spaces are said to be \textit{close} if there exists a constant $C>0$ such that \[d_{Z}(f(x),g(x)) < C,\] for every $x \in X$. It follows from \cite[Proposition 5.1.10]{Geometric_Group_Theory} that a quasi-isometric embedding $f:(X,d_X) \to (Z,d_Z)$ is a quasi-isometry if and only if there exists a quasi-isometric embedding $g:(Z,d_Z) \to (X,d_X)$ such that $g \circ f$ and $f \circ g$ are close to their respective identity maps. \par

\begin{example}
\label{fin_gen_groups}
It is illustrative to see that the class of metric spaces that can be obtained as the associated metric space of a covered space is rather large. Let $G$ be a finitely generated group with a symmetric generating set $\Sigma$ that contains the identity element of $G$. We obtain a left-invariant metric $d_G$ on $G$ by defining 
\begin{equation}
\label{word_metric}
    d_{G}(g,h) = \min \left\{n \, \Big| \, g^{-1}h = \sigma_{1} \cdots \sigma_{n}, \, \sigma_{i} \in \Sigma\right\}.
\end{equation} 
Consider the covering $\mathcal{Q} = (g\Sigma)_{g \in G}$ on $G$. The admissibility condition is satisfied due the cardinality of the generating set $\Sigma$. To see that $\mathcal{Q}$ is a concatenation it suffices to connect the identity to an arbitrary element $g = \sigma_{1} \cdots \sigma_{k}$ where $\sigma_{i} \in \Sigma$ for $i = 1, \dots, k$. The chain \[\Sigma,\sigma_{1}\Sigma,\sigma_{1}\sigma_{2}\Sigma, \dots, g\Sigma\] connects the identity to $g$ and we have \[\sigma_1 \cdots \sigma_{s+1} \in \left(\sigma_{1} \cdots \sigma_{s}\Sigma \right) \cap \left(\sigma_{1} \cdots \sigma_{s+1}\Sigma\right),\] for every $1 \leq s \leq k-1$. Hence $(G,\mathcal{Q})$ is a covered space where the identity $Id_{G}:(G,d_{\mathcal{Q}}) \to (G,d_{G})$ is a quasi-isometry. Moreover, it follows from \cite[Theorem 1.3.12]{Large_Scale} that any other choice of finite generating set than $\Sigma$ in (\ref{word_metric}) would give a quasi-isometric metric space. 
\end{example}

\begin{proposition}
Let $(X,\mathcal{Q})$ and $(Z,\mathcal{P})$ be covered spaces. Then a map $f:(X,d_{Q}) \to (Z,d_{\mathcal{P}})$ is a quasi-isometric embedding if and only if there exist constants $L,C \in \mathbb{N}$ such that
\begin{equation}
\label{covered_morphism}
    \mathcal{Q}(L(k + C),x,y) \ne \emptyset, \qquad \mathcal{Q}\left(\left\lfloor\frac{k - C}{L}\right\rfloor,x,y\right) = \emptyset,
\end{equation} for every $x,y \in X$, where $k$ is the smallest natural number such that $\mathcal{P}(k,f(x),f(y)) \ne \emptyset$. 
\end{proposition}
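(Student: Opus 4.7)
The plan is to translate the chain-theoretic conditions (\ref{covered_morphism}) into numerical distance inequalities, since chain existence is tightly coupled to the metric $d_{\mathcal{Q}}$ by its very definition. Writing $m := d_{\mathcal{Q}}(x,y)$, we have $\mathcal{Q}(n,x,y) \neq \emptyset$ iff $n \geq m$, and $\mathcal{Q}(n,x,y) = \emptyset$ iff $n < m$. By the same principle, the quantity $k$ appearing in the statement coincides with $d_{\mathcal{P}}(f(x),f(y))$.

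Under this dictionary, the first chain condition $\mathcal{Q}(L(k+C),x,y) \neq \emptyset$ becomes $m \leq L(k+C)$, which rearranges to $\tfrac{m}{L} - C \leq k$, giving the lower inequality in the definition of a quasi-isometric embedding. For the second condition, I would invoke the elementary fact that for real $\alpha$ and integer $m$, $\lfloor \alpha \rfloor < m$ iff $\alpha < m$. Hence $\mathcal{Q}(\lfloor (k-C)/L \rfloor, x, y) = \emptyset$ amounts to $\tfrac{k-C}{L} < m$, i.e., $k < Lm + C$, which in particular yields the upper bound $k \leq Lm + C$.

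For the converse, assume $f$ is a quasi-isometric embedding and invoke the remark preceding the proposition to choose integer parameters $L_0, C_0 \in \mathbb{N}$. The lower bound rearranges directly to $m \leq L_0(k + C_0)$, delivering the first chain condition with $L=L_0$, $C=C_0$. The upper bound $k \leq L_0 m + C_0$ gives only $\tfrac{k - C_0}{L_0} \leq m$; to upgrade this to the required strict inequality, I would replace $C_0$ by $C_0 + 1$. With $C := C_0 + 1$, we obtain $\tfrac{k - C}{L_0} < m$ and hence $\lfloor \tfrac{k-C}{L_0} \rfloor < m$. Taking $L := L_0$ and $C := C_0 + 1$ then satisfies both chain conditions simultaneously.

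The main subtlety I foresee is the off-by-one interplay between the strict inequality encoded by an empty chain set and the non-strict inequalities defining a quasi-isometric embedding, which in one direction requires enlarging the additive constant $C$ by one. This is the only genuine bookkeeping step; once it is in place, the proof reduces to an elementary rewriting of the same numerical inequalities.
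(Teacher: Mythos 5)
Your proof is correct and follows essentially the same route as the paper's: both directions amount to unfolding the two chain conditions into the defining inequalities relating $d_{\mathcal{Q}}(x,y)$ and $d_{\mathcal{P}}(f(x),f(y))$. You are in fact slightly more careful than the paper in the converse direction, where the strict inequality needed for $\mathcal{Q}(\lfloor(k-C)/L\rfloor,x,y)=\emptyset$ genuinely requires enlarging the additive constant by one --- a bookkeeping point the paper's proof passes over.
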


\begin{proof}
Let $f:(X,\mathcal{Q}) \to (Z,\mathcal{P})$ be a map that satisfies (\ref{covered_morphism}). Fix $x,y \in X$ and choose the smallest $k \in \mathbb{N}$ such that $d_{\mathcal{P}}(f(x),f(y)) \leq k$. Then $\mathcal{P}(k,f(x),f(y)) \ne \emptyset$ and it follows that \[\mathcal{Q}(L(k + C),x,y) \ne \emptyset.\] Hence $d_{\mathcal{Q}}(x,y) \leq L(k + C)$. The upper bound in the definition of a quasi-isometric embedding is verified similarly. Conversely, let $f:(X,d_\mathcal{Q}) \to (Z,d_{\mathcal{P}})$ be a quasi-isometric embedding with integer parameters $L,C > 0$. Fix $x,y \in X$ and let $k:= d_{\mathcal{P}}(f(x),f(y))$. Then we have \[L(k + C) \geq d_{\mathcal{Q}}(x,y) \geq \frac{k - C}{L}.\] These inequalities imply that $f$ satisfies (\ref{covered_morphism}) by the definition of the distance function $d_{\mathcal{Q}}$.
\end{proof}

A metric space $(X,d)$ is \textit{coarsely connected} if there exists a constant $c > 0$ such that for any two points $x,y \in X$ there exists a sequence $x = x_0,x_1,\dots,x_n = y$ such that $d(x_i,x_{i+1}) \leq c$ for $i = 0, \dots, n-1$. Coarse connectedness is a property that is invariant under quasi-isometries. It is clear from the construction that the associated metric space $(X,d_{\mathcal{Q}})$ of a covered space $(X,\mathcal{Q})$ is coarsely connected.
\begin{example}
Consider $\mathbb{N}_0$ with the metric \[d(n,m) = \max\{n,m\}, \quad \textrm{when} \, n \ne m\] and $d(n,n) = 0$ for any $n,m \in \mathbb{N}_{0}$. Clearly $(\mathbb{N}_{0},d)$ is a uniformly discrete metric space. However, for $m > 1$ we have $d(1,m) = m$ and $d(n,m) \geq m$ for every $n \in \mathbb{N}_{0}$. Since we can pick $m$ arbitrary large the metric space $(\mathbb{N}_{0},d)$ is not coarsely connected. Therefore, the metric space $(\mathbb{N}_{0},d)$ is not quasi-isometric to any associated metric space of a covered space.
\end{example}

\begin{remark}
Let $(X,\mathcal{Q})$ be a covered space with associated metric space $(X,d_{\mathcal{Q}})$. It is often more convenient to work with a smaller metric space; we do this by considering a net $N$ in $(X,d_Q)$. The inclusion $N \hookrightarrow X$ is then a quasi-isometry when we restrict the metric $d_Q$ to the set $N$. We will usually consider nets in $X$ with \textit{bounded geometry}, that is, nets $N$ such that \[|B_{N}(x,r)| \leq \psi(r), \quad r > 0,\] for some function $\psi$ that does not depend on the point $x \in N$. One option for a bounded geometry net $N$ in $(X,d_{\mathcal{Q}})$ is picking a uniformly finite number $F$ of points in each $Q_i \in Q$. Then we have \[|B_N(x,r)| \leq F N_{\mathcal{Q}}^{r}, \quad x \in N, \, r > 0.\] 
\end{remark}

The following proposition originates in the paper \cite[Proposition 3.8 C)]{Hans_Grobner} where it was formulated in terms of bi-Lipschitz equivalences. The fact that any bijective quasi-isometry on a uniformly discrete, bounded geometry metric space is a bi-Lipschitz equivalence \cite[Proposition 9.4.2]{Geometric_Group_Theory} gives the transition between their statement and the one below. Prior to our investigations, a special case of the result \cite[Proposition 3.8 C)]{Hans_Grobner} was proved in the Ph.D. thesis of Ren\'{e} Koch \cite[Theorem 5.2.6]{ThesisRene} containing more details than the original source. We will give a new proof since a detailed proof of the general version of the statement is lacking in the literature.

\begin{proposition}
\label{generalization_of_equivalent_coverings}
Let $(X,\mathcal{Q})$ and $(X,\mathcal{P})$ be covered spaces. Then $\mathcal{Q} \leq \mathcal{P}$ if and only if the identity map $Id_{X}:(X,d_\mathcal{Q}) \to (X,d_\mathcal{P})$ is Lipschitz continuous. Hence the coverings $\mathcal{Q}$ and $\mathcal{P}$ are equivalent if and only if the identity map $Id_X:(X,d_{\mathcal{Q}}) \to (X,d_{\mathcal{P}})$ is a quasi-isometry.
\end{proposition}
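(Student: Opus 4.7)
The plan is to prove the stated Lipschitz characterization, from which the quasi-isometry equivalence follows by absorbing additive constants into multiplicative ones on a uniformly discrete space. For both directions the central tool is the translation between the almost-subordinate condition and the geometry of $\mathcal{P}$-chains.

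For the forward direction, assume $\mathcal{Q} \leq \mathcal{P}$ with parameter $k_0$, so every $Q_i$ is contained in some $P_{j(i)}^{k_0*}$. I first prove a single-set bound: any two points $x, y$ lying in a common $Q_i$ satisfy $d_\mathcal{P}(x, y) \leq 2k_0 + 1$. The argument is that $x$ and $y$ lie in $P_{l_x}$ and $P_{l_y}$ respectively for indices $l_x, l_y \in j(i)^{k_0*}$, each of which is joined to $P_{j(i)}$ by a $\mathcal{P}$-chain of length $k_0 + 1$; concatenating these two chains through $P_{j(i)}$ produces a $\mathcal{P}$-chain of length $2k_0 + 1$ connecting $x$ to $y$. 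For general $x, y \in X$ with $d_\mathcal{Q}(x, y) = n$, I pick a witnessing $\mathcal{Q}$-chain $Q_{i_1}, \dots, Q_{i_n}$ and auxiliary points $z_l \in Q_{i_l} \cap Q_{i_{l+1}}$ (with $z_0 = x$, $z_n = y$) so that each consecutive pair $z_{l-1}, z_l$ lies inside a single $Q_{i_l}$; the single-set bound and the triangle inequality then yield $d_\mathcal{P}(x, y) \leq (2k_0 + 1) \, d_\mathcal{Q}(x, y)$, which is the required Lipschitz estimate.

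For the reverse direction, assume $d_\mathcal{P}(x, y) \leq L \cdot d_\mathcal{Q}(x, y)$ for all $x, y$. Given $i \in I$, fix any $x_0 \in Q_i$ and choose $j \in J$ with $x_0 \in P_j$ (possible since $\mathcal{P}$ covers $X$). For each $y \in Q_i$, the one-element $\mathcal{Q}$-chain $Q_i$ gives $d_\mathcal{Q}(x_0, y) \leq 1$, hence $d_\mathcal{P}(x_0, y) \leq L$. Taking a shortest $\mathcal{P}$-chain $P_{m_1}, \dots, P_{m_s}$ from $x_0$ to $y$ with $s \leq L$, I argue by induction on $t$ that $m_t \in j^{t*}$: the base case uses $x_0 \in P_{m_1} \cap P_j$ to give $m_1 \in j^{*}$, and the inductive step uses $m_t \in m_{t-1}^*$ together with the monotonicity $j^{(t-1)*} \subseteq j^{t*}$. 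Consequently $y \in P_{m_s} \subset P_j^{L*}$, with the integer $k = L$ independent of $i$, establishing $\mathcal{Q} \leq \mathcal{P}$.

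For the quasi-isometry statement, the first assertion gives equivalence of the coverings if and only if $Id_X$ is bi-Lipschitz in both directions, which is automatically a quasi-isometry with additive constant $0$ and surjective onto $X$. Conversely, any quasi-isometry $Id_X$ with parameters $L, C$ becomes bi-Lipschitz once the additive constant is absorbed, using $d_\mathcal{Q}(x, y), d_\mathcal{P}(x, y) \geq 1$ for distinct points; the Lipschitz characterization then yields equivalence. The main obstacle will be the bookkeeping with the $k*$-neighbor notation, since chain length, graph distance in the nerve of the covering, and the convention $i \in i^*$ interact in such a way that off-by-one errors are easy to introduce; once the single-set bound has been correctly pinned down, the rest reduces to triangle-inequality chasing.
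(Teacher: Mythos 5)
Your proof is correct and follows essentially the same route as the paper: the forward direction bounds $d_{\mathcal{P}}$ along a witnessing $\mathcal{Q}$-chain by passing through the sets $P_{j(i)}^{k_0*}$, and the reverse direction uses $d_{\mathcal{Q}}(x_0,y)\leq 1$ for $y\in Q_i$ to trap $Q_i$ inside $P_j^{L*}$. Your version is in fact slightly tighter in the bookkeeping: the explicit single-set bound $2k_0+1$ replaces the paper's informal diameter iteration, and your induction $m_t\in j^{t*}$ cleans up the paper's implicit assumption that the shortest $\mathcal{P}$-chain from $x_0$ to $y$ begins with $P_j$.
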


\begin{proof}
We start by assuming that $Q$ is almost subordinate to $\mathcal{P}$. For two distinct points $x,y \in X$, there exists a number $M \in \mathbb{N}$ such that $\mathcal{Q}(M,x,y) \neq \emptyset$. Pick a $Q$-chain $Q_{i_1},Q_{i_2}, \dots, Q_{i_M}$ from $x$ to $y$ of length $M$. Then there exists a $k \in \mathbb{N}$ such that for each $l = 1, \dots, M$ we can find an $P_{j(l)} \in \mathcal{P}$ such that \[Q_{i_l} \subset P_{j(l)}^{k*}.\] Since $P_{j(1)}^{k*}$ has non-empty intersection with $P_{j(2)}^{k*}$ we know that \[\textrm{diam}_{\mathcal{P}}\left(P_{j(1)}^{k*} \cup P_{j(2)}^{k*}\right) \leq 2k.\] Continuing this, we obtain by iteration that \[\textrm{diam}_{\mathcal{P}}\left(\bigcup_{l = 1}^{M}P_{j(l)}^{k*}\right) \leq Mk.\] Hence we can find a $\mathcal{P}$-chain between $x$ and $y$ with length at most $Mk$. This shows that \[d_{\mathcal{P}}(x,y) \leq kd_{Q}(x,y),\] and hence the identity map $Id_X:(X,d_{\mathcal{Q}}) \to (X,d_{\mathcal{P}})$ is Lipschitz continuous. \par
Conversely, assume that $d_{\mathcal{P}}(x,y) \leq M d_{Q}(x,y)$ for every $x,y \in X$ and some $M > 0$. We can assume that $M$ is an integer by enlarging it. Fix $x_0 \in X$ and choose $Q_i \in Q$ and $P_j \in \mathcal{P}$ such that $x_0 \in Q_i \cap P_j$. Then for any $y \in Q_i$ we have $d_{Q}(x_0,y) \leq 1$ and thus $d_{\mathcal{P}}(x,y) \leq M.$ Hence there is a $\mathcal{P}$-chain \[P_{j} = P_{j_1}, P_{j_2}, \dots, P_{j_M}\] from $x_0$ to $y$. This shows that $y \in P_{j}^{M*}$ for any $y \in Q_i$ and so $Q_i \subset P_{j}^{M*}$. Since the constant $M$ does not depend on $x$ and $y$ we have that $Q$ is almost subordinate to $\mathcal{P}$.
\end{proof}

The notion of quasi-isometries between associated metric spaces of covered spaces is more flexible than the notion of equivalent coverings since we can compare coverings on different sets. This will allow us to consider quasi-isometric invariant properties of covered spaces through the associated metric space in Subsection \ref{sec: Large Scale Invariants of a Covered Space}. The motivation for considering this is to show that certain decomposition spaces can not embed nicely into other decomposition spaces in Section \ref{Chapter_Decomposition_Spaces} and Section \ref{sec: Concrete Settings}. 

\begin{example}
\label{grid_example}
Consider the \textit{uniform covering} \[\mathcal{U} = (Q_{n_1,\dots,n_k})_{n_1,\dots,n_k \in \mathbb{Z}}, \qquad Q_{n_1,\dots,n_k} := [0,1]^{k} + (n_1,\dots,n_k),\] on $\mathbb{R}^k$.
It is straightforward to check that $\mathcal{U}$ is a concatenation. We will call the resulting metric space $(\mathbb{R}^k,d_\mathcal{U})$ the \textit{uniform metric space} on $\mathbb{R}^k$. The set $\mathbb{Z}^k$ is a net in $(\mathbb{R}^k,d_{\mathcal{U}})$ and we have \[d_{\mathcal{U}}((n_1,\dots,n_k),(m_1,\dots,m_k)) = \max\{|m_1 - n_1|, \dots, |m_k - n_k|\}, \quad (n_1, \dots, n_k), (m_1, \dots, m_k) \in \mathbb{Z}^{k}.\]
In this example a special feature emerges; the integer lattice $\mathbb{Z}^k$ is also a group that acts on itself by isometries when equipped with the metric $d_\mathcal{U}$. Hence the symmetries of the uniform covering $\mathcal{U}$ on $\mathbb{R}^k$ is incorporated in the metric $d_{\mathcal{U}}$ through being left (and right) invariant under the action of $\mathbb{Z}^k$. 
\end{example}

\begin{example}
\label{dyadic covering}
Consider the \textit{dyadic covering} $\mathcal{B} := \mathcal{B}(\mathbb{R}^n) = (D_m)_{m \in \mathbb{N}_{0}}$ on $\mathbb{R}^n$ given by the {\it dyadic intervals} \[D_0 = \left\{x \in \mathbb{R}^n \, \Big | \, \|x\|_{2} \leq 2 \right\}, \qquad D_{m} = \left\{x \in \mathbb{R}^n \, \Big | \, 2^{m-1} \leq \|x\|_{2} \leq 2^{m+1}\right\}, \quad  m \in \mathbb{N},\] where $\|\cdot\|_2$ denotes the Euclidean norm.  As only the magnitude of elements in $\mathbb{R}^n$ determines which dyadic interval they are in, the covering is inherently one-dimensional. Hence by picking the net \[N := \left\{(2^n,\dots,0)  \, \Big| \, n \in \mathbb{N}_{0}\right\},\] we have that $(\mathbb{R}^n,d_{\mathcal{B}(\mathbb{R}^n)}) \simeq (N,d_{\mathcal{B}(\mathbb{R}^n)})$ is quasi-isometric to $\mathbb{N}_0$ with its usual metric. In particular, the metric spaces $(\mathbb{R}^n, d_{\mathcal{B}(\mathbb{R}^n)})$ and $(\mathbb{R}^m,d_{\mathcal{B}(\mathbb{R}^m)})$ are quasi-isometric for all $n,m \geq 1$. 
\end{example}

\subsection{Incorporating the Symmetries of a Covering}

We take a closer look into the symmetries of a covering implemented by group actions, as seen in Example \ref{grid_example}. First we have to introduce some terminology to describe the setting. Let $G$ be a finitely generated group acting on a metric space $(X,d_X)$ by isometries. For $x \in X$ and $R > 0$, the $R$-\textit{stabilizer} $\textrm{Stab}_{R}(x)$ is the set \[\textrm{Stab}_{R}(x) := \left\{g \in G \, \Big | \, d_{X}(gx,x) \leq R \right\}.\] We will call the action of $G$ on $(X,d_{X})$ \textit{large scale stable} if any non-identity element $g \in G$ satisfies \[0 < \sup_{x \in X}d_{X}(gx,x) <\infty.\] Note that a large scale stable action is actually \textit{effective} due to the lower bound, that is, $gx = x$ for every $x \in X$ implies that $g$ is the identity element of $G$. We call a point $x_0 \in X$ \textit{almost transitive} if for every $x \in X$ there exists a $g \in G$ such that \[d_{X}(gx_0,x) \leq C,\] where $C>0$ does not depend on the point $x \in X$. This is a large scale analogue of a transitive action where one allows for some uniform error. Finally, recall that a finitely generated group $N$ is \textit{nilpotent} if its \textit{lower central series} terminates; there should exist $n \in \mathbb{N}_{0}$ such that \[N = C_{0}(N) \rhd C_{1}(N) \rhd \cdots \rhd C_{n}(N) = \{e\}, \quad C_{i}(N) := [N,C_{i-1}(N)], \quad i = 1, \dots, n.\]

\begin{theorem}
\label{symmetries_theorem}
Let $(X,\mathcal{Q})$ be a covered space with associated metric space $(X,d_{\mathcal{Q}})$. Assume there is a large scale stable action of a finitely generated group $G$ on $(X,d_\mathcal{Q})$.

\begin{enumerate}[(a)]
    \item The function \[d_{G}(g,h) := \sup_{x \in X}d_{\mathcal{Q}}(gx,hx), \quad g,h \in G,\] defines a left-invariant metric on $G$. 
    \item Assume that there exists an almost transitive point $x_0 \in X$ such that
\begin{equation}
\label{fixed_point_estimate}
\sup_{x \in X}d_{\mathcal{Q}}(gx,x) \leq Ld_{\mathcal{Q}}(gx_0,x_0) + C
\end{equation}
holds for arbitrary $g \in G$ and uniform constants $L,C > 0$. Then $(G,d_G)$ is quasi-isometric to $(X,d_\mathcal{Q})$.
\item Assume that we have the bound 
\begin{equation}
\label{stabilizer_bound}
\left|\textrm{Stab}_{n}(x)\right| \leq p(n)
\end{equation}
for every $x \in X$ and $n \in \mathbb{N}$, where $p$ is a polynomial with integer coefficients. Then $G$ is quasi-isometric to a finitely generated nilpotent group.
\end{enumerate}  
\end{theorem}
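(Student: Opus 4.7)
For part (a), the defining formula rewrites, using that every $g \in G$ acts by $d_{\mathcal{Q}}$-isometries, as $d_G(g, h) = \sup_{x \in X} d_{\mathcal{Q}}(g^{-1}h \cdot x, x)$. Verification of the metric axioms is then routine: large scale stability gives both that this supremum is finite and that it is strictly positive whenever $g \ne h$; symmetry and the triangle inequality follow by taking suprema of the pointwise versions; and left-invariance is immediate from isometricity of the action of $k \in G$ when comparing $d_G(kg, kh)$ with $d_G(g, h)$.

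For part (b), the natural candidate is the orbit map $\phi:(G, d_G) \to (X, d_{\mathcal{Q}})$ defined by $\phi(g) = gx_0$. One inequality, $d_{\mathcal{Q}}(\phi(g), \phi(h)) \leq d_G(g, h)$, is immediate from the defining supremum. For the opposite inequality, the left-invariance established in (a) allows me to rewrite $d_G(g, h) = \sup_{x \in X} d_{\mathcal{Q}}(g^{-1}h \cdot x, x)$, and then to apply hypothesis (\ref{fixed_point_estimate}) to the group element $g^{-1}h$, obtaining the bound $L \cdot d_{\mathcal{Q}}(g^{-1}h \cdot x_0, x_0) + C = L \cdot d_{\mathcal{Q}}(\phi(g), \phi(h)) + C$. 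Thus $\phi$ is a quasi-isometric embedding, and the almost-transitivity of $x_0$ is exactly the statement that the orbit $\phi(G) = Gx_0$ is a net in $X$.

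Part (c) is the substantive step, and the plan is to convert the stabilizer bound into an intrinsic polynomial growth estimate on $G$. Fix any finite symmetric generating set $\Sigma$ for $G$ and set $K := \max_{\sigma \in \Sigma} \sup_{x \in X} d_{\mathcal{Q}}(\sigma x, x)$, which is finite by large scale stability together with finiteness of $\Sigma$. For any $g \in G$ of word length at most $n$, writing $g = \sigma_1 \sigma_2 \cdots \sigma_n$ and applying the triangle inequality along the telescoping chain $x, \sigma_n x, \sigma_{n-1}\sigma_n x, \dots, g x$, while exploiting isometricity of the action at each step, yields $d_{\mathcal{Q}}(gx, x) \leq nK$ for every $x \in X$. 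Hence $g \in \textrm{Stab}_{nK}(x)$, and hypothesis (\ref{stabilizer_bound}) gives $|B_G(e, n)| \leq p(nK)$ in the word metric, so $G$ has polynomial growth. Gromov's polynomial growth theorem then produces a nilpotent subgroup $N$ of finite index in $G$, and since a finitely generated group is quasi-isometric to any of its finite-index subgroups, one obtains $G \simeq N$ as required.

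The main obstacle is part (c), where the only genuinely deep ingredient is the invocation of Gromov's theorem; the remainder is careful bookkeeping to translate the action-theoretic bound (\ref{stabilizer_bound}), which is phrased in terms of $d_{\mathcal{Q}}$, into an intrinsic statement about the word metric of $G$. The crucial bridge is the uniform finiteness of $\sup_{x \in X} d_{\mathcal{Q}}(\sigma x, x)$ for $\sigma \in \Sigma$, which is guaranteed by large scale stability and allows word-length to dominate stabilizer-radius linearly, at which point the polynomial bound transfers directly.
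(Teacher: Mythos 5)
Your proof is correct. Parts (a) and (b) follow the paper's argument essentially verbatim: the same verification of the metric axioms from large scale stability, and the same orbit map $g \mapsto gx_0$ with the tautological upper bound and with (\ref{fixed_point_estimate}) applied to $g^{-1}h$ supplying the lower bound; your version actually writes out the computation that the paper only gestures at. In part (c) you take a genuinely different, more self-contained route to polynomial growth. The paper first observes that the stabilizer bound forces the metric $d_G$ from part (a) to be proper with $|B_{d_G}(e,n)| \leq p(n)$, then invokes the fact that all proper left-invariant metrics on a finitely generated group yield quasi-isometric spaces in order to transfer polynomial growth to the word metric, and only then applies Gromov's theorem. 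You bypass $d_G$ entirely: the telescoping estimate $d_{\mathcal{Q}}(gx,x) \leq nK$ for $g$ of word length at most $n$ places the word ball $B_G(e,n)$ inside $\textrm{Stab}_{nK}(x)$, giving $|B_G(e,n)| \leq p(nK)$ directly in the word metric. (The appeal to isometricity in the telescoping step is not even needed, since $K$ already bounds $\sup_x d_{\mathcal{Q}}(\sigma x, x)$ uniformly over all base points.) Your route buys an explicit bridge between the action-theoretic hypothesis and the intrinsic growth of $G$ and avoids the citation on proper left-invariant metrics; the paper's route has the mild advantage of reusing the metric already constructed in (a). Both arguments conclude identically with Gromov's polynomial growth theorem and the fact that a finite-index nilpotent subgroup is quasi-isometric to the ambient group.
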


\begin{proof}
\begin{enumerate}[(a)]
\item
The function $d_{G}$ is well-defined by the upper bound in the definition of a large scale stable action. If $d_{G}(g,h) = 0$, then we have $d_{\mathcal{Q}}(gx,hx) = 0$ for every $x \in X$ and the positivity of $d_{\mathcal{Q}}$ implies that $h^{-1}gx = x$ for every $x \in X$. Since the action is effective we conclude that $g = h$. The left-invariance of the metric $d_G$ is a reformulation of the fact the $G$ acts by isometries on $X$. 
\item
Assume there exists an almost transitive point $x_0 \in X$ such that (\ref{fixed_point_estimate}) is satisfied and consider the map $\phi:G \to X$ defined by $\phi(g) = gx_0$. We want to show that $\phi$ is a quasi-isometry between $(G,d_G)$ and $(X,d_{\mathcal{Q}})$. It is tautological that \[d_{\mathcal{Q}}(\phi(g),\phi(h)) \leq d_{G}(g,h).\] Moreover, the estimate (\ref{fixed_point_estimate}) is a simplification of the lower-bound estimate for a quasi-isometric embedding with parameters $L,C > 0$ where the isometry property is incorporated. Finally, the image of $\phi$ is a net because $x_0$ is transitive point.
\item
The $n$-stabilizer bound (\ref{stabilizer_bound}) implies in particular that the metric $d_G$ is \textit{proper}, that is, \[|B_{G}(e,n)|< \infty, \qquad \textrm{for every }n \in \mathbb{N}.\] It follows from \cite[Theorem 1.3.12]{Large_Scale} that all proper, left-invariant metrics on $G$ give quasi-isometric metric spaces. Moreover, Gromov's celebrated Polynomial Growth Theorem \cite{Polynomial_Growth_Paper} implies that the bound (\ref{stabilizer_bound}) is equivalent with $G$ being \textit{virtually nilpotent}, that is, possessing a nilpotent subgroup $N \subset G$ with finite index. The result follows from \cite[Corollary 5.4.5]{Geometric_Group_Theory} stating that finite index subgroups of finitely generated groups are nets.
\end{enumerate} 
\end{proof}

\begin{example}
\label{heisenberg_3_example}
Let $\mathcal{P} = (P_{n,m,l})_{n,m,l \in \mathbb{Z}}$ be the concatenation on $\mathbb{R}^3$ given by \[P_{n,m,l} = (n,m,l) * [0,1]^{3}, \qquad (n,m,l) * (n',m',l') := (n+n',m+m',l + l' + nm').\] This is almost the same as the uniform covering $\mathcal{U}$ on $\mathbb{R}^3$ introduced in Example \ref{grid_example}, except for the intertwining in the third component. It is straightforward to check that the \textit{discrete Heisenberg group} $\mathbb{H}_{3}(\mathbb{Z}) := (\mathbb{Z}^3,*)$ acts on the metric space $(\mathbb{R}^3,d_{\mathcal{P}})$ by isometries. It satisfies all the assumptions in Theorem \ref{symmetries_theorem} (b) and we deduce that the associated metric space $(\mathbb{R}^3,d_{\mathcal{P}})$ is quasi-isometric to the discrete Heisenberg group with any proper, left-invariant metric. We will see after Example \ref{Heisenberg_example} that the concatenation $\mathcal{P}$ on $\mathbb{R}^3$ is not equivalent to the uniform covering $\mathcal{U}$ on $\mathbb{R}^3$. 
\end{example}

\subsection{Large Scale Invariants of a Covered Space}
\label{sec: Large Scale Invariants of a Covered Space}

Let $\mathsf{P}$ denote a quasi-isometric invariant property of a metric space. We say that the covered space $(X,\mathcal{Q})$ has \textit{property} $\mathsf{P}$ if the associated metric space $(X,d_{\mathcal{Q}})$ has property $\mathsf{P}$. The first property we will introduce for covered spaces is a variant of topological dimension adapted to the quasi-isometric setting.

\subsubsection{Asymptotic Dimension}

\begin{definition}
Let $\mathcal{U}$ be a covering of a metric space $(X,d_X)$. The \textit{$R$-multiplicity} of $\mathcal{U}$ for $R>0$ is the smallest integer $n$ such that each ball $B(x,R)$ intersects at most $n$ elements of $\mathcal{U}$ for all $x \in X$. The \textit{asymptotic dimension} of $X$ is the smallest number $n \in \mathbb{N}_{0}$ such that for each $R>0$ there exists a covering $\mathcal{U} = (U_{i})_{i \in I}$ with uniformly bounded diameters and with $R$-multiplicity $n + 1$. If no $n \in \mathbb{N}_{0}$ satisfies the condition, then the metric space $(X,d_X)$ is said to have \textit{infinite asymptotic dimension}. We use the notation $\textrm{asdim}(X,d_{X})$ or simply $\textrm{asdim}(X)$ if the metric is clear from the context.
\end{definition}

The asymptotic dimension is invariant under quasi-isometries, see \cite[Theorem 2.2.5]{Large_Scale}. In particular, if $\mathcal{Q}$ and $\mathcal{P}$ are two concatenations on a set $X$ such that $\textrm{asdim}(X,d_{\mathcal{Q}}) \neq \textrm{asdim}(X,d_{\mathcal{P}})$, then Proposition \ref{generalization_of_equivalent_coverings} implies that $\mathcal{Q}$ and $\mathcal{P}$ are not equivalent coverings. 

\begin{example}
\label{asymptotic_dim_zero}
As an illustration we will show that a covered space has asymptotic dimension zero if and only if it is quasi-isometric to a point. Let $(X,\mathcal{Q})$ be a covered space with asymptotic dimension zero. Consider a net $N \subset X$ formed by picking one element $x_i \in Q_{i}$ for each $i \in I$. It suffices to consider $(N,\mathcal{Q})$ since asymptotic dimension is invariant under quasi-isometries. For $R = 2$ there exists a covering $\mathcal{U} = (U_{j})_{j \in J}$ with uniformly bounded diameters such that $B(x_i,2)$ only intersects one of the $U_{i}$'s for $x_i \in N$. Since $\mathcal{U}$ is a covering it follows that $B(x_i,2) \subset U_j$ for some $j \in J$. If $x_k \in N$ with $d_{\mathcal{Q}}(x_k,x_i) = 1$, then $B(x_k,2)$ also has to be contained in the same $U_j$. Continuing this way shows that $N \subset U_j$ since $\mathcal{Q}$ is a concatenation. Since $U_j$ is bounded it follows that $(N,d_{\mathcal{Q}})$, and hence $(X,d_{\mathcal{Q}})$, is quasi-isometric to a point. Conversely, any bounded metric space clearly has asymptotic dimension zero.
\end{example}

We emphasize that the argument in Example \ref{asymptotic_dim_zero} relies on that $(X,d_{\mathcal{Q}})$ is coarsely connected. The set if $p$-adic numbers $\mathbb{Q}_{p}$ for a prime $p$ has asymptotic dimension zero as a consequence of the inequality \[d_{\mathbb{Q}_p}(x,z) \leq \max \left\{d_{\mathbb{Q}_p}(x,y),d_{\mathbb{Q}_p}(y,z)\right\}, \quad x,y,z \in \mathbb{Q}_{p},\] without being bounded as a metric space.

\begin{proposition}
\label{grids_are_different}
The uniform metric spaces $(\mathbb{R}^n,d_{\mathcal{U}})$ and $(\mathbb{R}^m,d_{\mathcal{U}})$ considered in Example \ref{grid_example} are quasi-isometric only when $n = m$. Moreover, there exists a quasi-isometric embedding from $(\mathbb{R}^n,d_{\mathcal{U}})$ to $(\mathbb{R}^m,d_{\mathcal{U}})$ precisely when $n \leq m$.
\end{proposition}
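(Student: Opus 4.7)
The strategy is to reduce everything to a well-known computation in coarse geometry, namely that $\operatorname{asdim}(\mathbb{R}^n)=n$, and then invoke the monotonicity of asymptotic dimension together with the obvious coordinate inclusion.

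First I would observe that the integer lattice $\mathbb{Z}^n$ is a net in $(\mathbb{R}^n,d_{\mathcal{U}})$ by the computation in Example \ref{grid_example}, and the restricted metric on $\mathbb{Z}^n$ is the $\ell^{\infty}$-metric. Since all translation-invariant proper metrics coming from norms on $\mathbb{R}^n$ are bi-Lipschitz equivalent, the pair $(\mathbb{R}^n,d_{\mathcal{U}})$ is quasi-isometric to $\mathbb{R}^n$ equipped with the Euclidean metric. It is a standard fact in large scale geometry that $\operatorname{asdim}(\mathbb{R}^n)=n$; see for example \cite[Theorem 2.2.6]{Large_Scale}. Hence $\operatorname{asdim}(\mathbb{R}^n,d_{\mathcal{U}})=n$.

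For the non-existence of a quasi-isometric embedding when $n>m$, I would use the fact that asymptotic dimension is monotone under quasi-isometric embeddings: if $f\colon X\to Y$ is a quasi-isometric embedding, then $f(X)$ is quasi-isometric to $X$, and taking subspaces cannot increase asymptotic dimension, so $\operatorname{asdim}(X)\leq \operatorname{asdim}(Y)$. Applying this to a hypothetical quasi-isometric embedding $(\mathbb{R}^n,d_{\mathcal{U}})\to(\mathbb{R}^m,d_{\mathcal{U}})$ would give $n\leq m$. In particular, a quasi-isometry forces $n=m$.

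For the existence when $n\leq m$, the plan is to exhibit an explicit isometric embedding. Consider the coordinate inclusion $\iota\colon\mathbb{Z}^n\hookrightarrow\mathbb{Z}^m$, $(k_1,\dots,k_n)\mapsto(k_1,\dots,k_n,0,\dots,0)$. Under the explicit formula for $d_{\mathcal{U}}$ given in Example \ref{grid_example}, this map is a genuine isometry between nets, and by composing with the quasi-isometries $(\mathbb{R}^n,d_{\mathcal{U}})\simeq(\mathbb{Z}^n,d_{\mathcal{U}})$ and $(\mathbb{Z}^m,d_{\mathcal{U}})\simeq(\mathbb{R}^m,d_{\mathcal{U}})$ one obtains a quasi-isometric embedding $(\mathbb{R}^n,d_{\mathcal{U}})\to(\mathbb{R}^m,d_{\mathcal{U}})$. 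The only mildly subtle point, and the one place where a little care is needed, is the monotonicity statement for asymptotic dimension under quasi-isometric embeddings; this is routine from the definition but should be written out or cited explicitly. Otherwise the argument is essentially a direct appeal to the standard value of $\operatorname{asdim}(\mathbb{R}^n)$.
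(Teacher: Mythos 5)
Your proposal is correct and follows essentially the same route as the paper: reduce to the net $\mathbb{Z}^n$, use $\operatorname{asdim}(\mathbb{Z}^n)=n$ together with monotonicity of asymptotic dimension under quasi-isometric embeddings for the obstructions, and use the coordinate inclusion $\mathbb{Z}^n\hookrightarrow\mathbb{Z}^m$ for existence when $n\leq m$. The only cosmetic difference is that the paper handles the quasi-isometry case directly by quasi-isometric invariance of $\operatorname{asdim}$ rather than deducing it from the embedding statement, which is immaterial.
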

\begin{proof}
We have already established in Example \ref{grid_example} that $(\mathbb{R}^n,d_{\mathcal{U}})$ is quasi-isometric to the integer lattice $\mathbb{Z}^n$ with its usual left-invariant metric. A standard fact in large scale geometry \cite[Example 2.2.6]{Large_Scale} states that the asymptotic dimension of $\mathbb{Z}^n$ is $n$. Hence the first statement follows from the quasi-isometric invariance of asymptotic dimension. \par 
For the second statement, assume that there is a quasi-isometric embedding $\phi:\mathbb{Z}^n \to \mathbb{Z}^m$. The subspace $\phi(\mathbb{Z}^n) \subset \mathbb{Z}^m$ has to have asymptotic dimension less than $m$ by restricting any covering fulfilling the definition of asymptotic dimension. Hence $\mathbb{Z}^n \simeq \phi(\mathbb{Z}^n)$ implies the necessity of $n \leq m$. If $n \leq m$, the inclusion  $\mathbb{Z}^n \hookrightarrow \mathbb{Z}^m$ into the first $n$ coordinates is easily seen to be a quasi-isometric embedding.
\end{proof}

\begin{example}
\label{example_dyadic_and_grid}
The associated metric space of the dyadic covered space $(\mathbb{R}^n,\mathcal{B}(\mathbb{R}^n))$ considered in Example \ref{dyadic covering} is quasi-isomorphic to $\mathbb{N}_0$ with its usual metric. Since $\mathbb{N}_0 \subset \mathbb{Z}$ and $\mathbb{N}_0$ is not bounded, we can conclude from Example \ref{asymptotic_dim_zero} that the asymptotic dimension of $(\mathbb{R}^n,\mathcal{B}(\mathbb{R}^n))$ is one. Hence the dyadic covering $\mathcal{B}\left(\mathbb{R}^n\right)$ and the uniform covering $\mathcal{U}\left(\mathbb{R}^n\right)$ considered in Example \ref{grid_example} are not equivalent as coverings unless possibly when $n = 1$. However, it follows from a straightforward calculation that there are no quasi-isometries between $\mathbb{N}_0$ and $\mathbb{Z}$ with their usual metrics. Hence the associated metric spaces $(\mathbb{R}^n,d_{\mathcal{U}})$ and $(\mathbb{R}^l,d_{\mathcal{B}(\mathbb{R}^l)})$ are not quasi-isometric for any values $n,l \geq 1$. Although this is rather straightforward to show directly as well, it showcases the potential of the large scale approach.
\end{example}

We showed in Example \ref{fin_gen_groups} that every finitely generated group may be considered as the associated metric space of a covered space. There are examples of finitely generated groups that do not have finite asymptotic dimension, such as the \textit{wreath product} $\mathbb{Z} \wr \mathbb{Z}$. We refer the reader to \cite[Proposition 2.6.3]{Large_Scale} for the definition of wreath product and the calculation giving that $\mathbb{Z} \wr \mathbb{Z}$ has infinite asymptotic dimension.

\subsubsection{Representations as Graphs}

We will associate a graph to any covered space and demonstrate how this makes certain properties of covered spaces more apparent. 
Consider a covered space $(X,\mathcal{Q})$ and form a net $N = (x_i)_{i \in I} \subset X$ where $x_i \in Q_i$ for each $i \in I$. We can consider the graph $G(N)$ whose vertices are indexed by the points in $N$. We declare that there is an edge between the vertices $x_i$ and $x_j$ if and only if $d_{\mathcal{Q}}(x_i,x_j) \leq 2$. Then the metric space $(N,d_\mathcal{Q})$ is quasi-isometric to the usual graph metric on the vertices of $G(N)$, see \cite[Example 1.1.10]{Large_Scale}. Moreover, we can extend the graph metric to the edges by identifying each edge $e = x_i x_j$ with the interval $[0,1]$. The resulting metric space $(G(N),d_G)$ is quasi-isometric to $(X,d_{\mathcal{Q}})$. 
\begin{definition}
A metric space $(X,d_{X})$ is said to be \textit{(quasi-)geodesic} if there exist constants $L,C > 0$ such that for every two points $x,y \in X$ we can find a (quasi-)isometric embedding $\gamma:[0,d_{X}(x,y)] \to X$ with parameters $L,C$ where $\gamma(0) = x$ and $\gamma(d_{X}(x,y)) = y$. 
\end{definition}
Since $(G(N),d_G)$ is a geodesic metric space it follows that $(X,d_{\mathcal{Q}})$ is a quasi-geodesic metric space. The relationship between covered spaces and graph theory is more than superficial, and there is parallel terminology in the two subjects. Recall that the \textit{degree} of a vertex in a graph is the number of neighbouring vertices. A connected graph is said to have \textit{bounded geometry} if the degrees of the vertices are uniformly bounded. Hence the associated metric space of any covered space is quasi-isometric to a connected graph with bounded geometry. This allows us to borrow results from the well established theory of graphs, a connection that to our knowledge has not been made before. In particular, we have the following result from \cite[Example 3.8]{Random_Walks}.

\begin{proposition}
\label{graph_theory_lemma}
Let $(X,\mathcal{Q})$ be any covered space with admissibility constant $N_{\mathcal{Q}} \geq 3$. Then $(X,d_{\mathcal{Q}})$ is quasi-isometric to a connected graph $(G,d)$ equipped with the graph metric and with degrees bounded above by $3$.
\end{proposition}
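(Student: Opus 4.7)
The plan is to combine the graph-theoretic reformulation recorded just before the proposition with a classical vertex-splitting trick that trades high local degrees for a bounded distortion of distances. Concretely, I would choose a net $N = (x_i)_{i \in I}$ by picking one representative $x_i \in Q_i$ and form the graph $G(N)$ whose vertices are the $x_i$ and in which $x_i$ is joined to $x_j$ precisely when $Q_i \cap Q_j \ne \emptyset$ (equivalently, $d_\mathcal{Q}(x_i,x_j) \leq 2$). By the discussion preceding the proposition, $(X,d_\mathcal{Q}) \simeq (G(N),d_{G(N)})$, and since $|i^*| \leq N_\mathcal{Q}$ the degree of any vertex of $G(N)$ is at most $N_\mathcal{Q} - 1$. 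It therefore suffices to show that any connected graph $H$ of bounded maximum degree is quasi-isometric to a connected graph of maximum degree at most $3$.

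To accomplish this, I would perform the following vertex substitution. For each vertex $v \in H$, enumerate its neighbours as $w_{v,1},\dots,w_{v,d(v)}$ and replace $v$ by a path $P_v$ on $d(v)$ new vertices $v^{(1)},\dots,v^{(d(v))}$ (a single vertex when $d(v) \leq 1$), with the edges $\{v^{(i)},v^{(i+1)}\}$ for $i = 1,\dots,d(v)-1$. For each original edge $\{v,w\} \in E(H)$, written as $w = w_{v,k}$ and $v = w_{w,m}$, insert exactly one cross-edge $\{v^{(k)},w^{(m)}\}$ in the new graph $G$. Then each $v^{(k)}$ receives at most two edges from $P_v$ and exactly one cross-edge, so $\deg_G(v^{(k)}) \leq 3$, and the graph $G$ is connected because $H$ is. The assumption $N_\mathcal{Q} \geq 3$ ensures that this construction is non-degenerate, as paths $P_v$ of length $\geq 1$ appear.

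Next, I would verify that the map $\iota \colon H \to G$ defined by $\iota(v) = v^{(1)}$ is a quasi-isometry. For the upper bound, any edge $\{v,w\}$ of $H$ lifts to a path in $G$ of length at most $(d(v)-1) + 1 + (d(w)-1) \leq 2N_\mathcal{Q}$, whence $d_G(\iota(v),\iota(w)) \leq 2N_\mathcal{Q}\,d_H(v,w)$. For the lower bound, any walk in $G$ from $\iota(v)$ to $\iota(w)$ decomposes into alternating segments inside the paths $P_u$ and cross-edges; projecting each cross-edge $\{u^{(k)},(u')^{(m)}\}$ back to the edge $\{u,u'\} \in E(H)$ produces a walk from $v$ to $w$ in $H$ whose length equals the number of cross-edges in the original walk, which is bounded above by the walk's total length. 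Hence $d_H(v,w) \leq d_G(\iota(v),\iota(w))$. Finally, every vertex of $G$ lies on some $P_v$ and is therefore within distance at most $N_\mathcal{Q} - 1$ of $\iota(v)$, so $\iota(H)$ is a net in $G$.

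I expect the main subtlety to lie in the bookkeeping for the lower-bound estimate: one must confirm that the projection of a walk in $G$ to a walk in $H$ genuinely has length equal to the count of cross-edges used, and that no shortcut through the internal structure of some path $P_v$ can lower the projected length below $d_H(v,w)$. Once this is in place, the chain of quasi-isometries $(X,d_\mathcal{Q}) \simeq (G(N),d_{G(N)}) \simeq (G,d_G)$ yields the claim.
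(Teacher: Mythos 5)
The paper does not actually prove this proposition: it records it as a consequence of the discussion on nerve graphs and defers the degree-reduction step to \cite[Example 3.8]{Random_Walks}. Your argument is a correct, self-contained reconstruction of exactly that construction, and its two-stage structure (first pass to the bounded-degree graph $G(N)$ on a net, then split each vertex $v$ into a path $P_v$ carrying one cross-edge per original neighbour) matches both the paper's surrounding remarks and the cited source. The key estimates all check out: each new vertex has at most two path-edges and exactly one cross-edge, hence degree at most $3$; the upper bound $d_G(\iota(v),\iota(w)) \leq 2N_{\mathcal{Q}}\,d_H(v,w)$ follows by concatenating the per-edge lifts since each lift starts and ends at a vertex of the form $u^{(1)}$; and the lower bound is sound because the projection $u^{(k)} \mapsto u$ sends every edge of $G$ either to an edge of $H$ (a cross-edge) or to a single vertex (a path-edge), so a walk in $G$ projects to a walk in $H$ of length equal to the number of cross-edges used, which cannot undercut $d_H(v,w)$. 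Two minor points. First, your parenthetical claim that $Q_i \cap Q_j \neq \emptyset$ is \emph{equivalent} to $d_{\mathcal{Q}}(x_i,x_j) \leq 2$ is not exact (only the forward implication holds in general, since a short chain between $x_i$ and $x_j$ need not pass through $Q_i$ and $Q_j$ themselves); this is immaterial because the intersection-based definition you actually use is the one giving the degree bound $|i^*|-1 \leq N_{\mathcal{Q}}-1$, and that graph is still quasi-isometric to $(X,d_{\mathcal{Q}})$ up to an additive constant. Second, the hypothesis $N_{\mathcal{Q}} \geq 3$ plays no essential role in your argument beyond excluding the degenerate coverings with at most two elements, which is consistent with the paper's remark that $N_{\mathcal{Q}} = 2$ forces the covering to have only two members.
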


If the number of elements in each $Q_i$ is larger than $N_{\mathcal{Q}}$, then it is clear from the construction in \cite[Example 3.8]{Random_Walks} that we can take the vertices of $G$ to be elements in $X$ in Proposition \ref{graph_theory_lemma}. The number $3$ is clearly sharp, as any concatenation $\mathcal{Q}$ with $N_{\mathcal{Q}} = 2$ can only have two elements.

\begin{remark}
There is a more general notion than quasi-isometries present in the large scale literature known as \textit{coarse equivalences}, see \cite[Definition 1.4.1]{Large_Scale}. The reason we consider quasi-isometries rather than coarse equivalences follows from the fact that the two definitions coincide between quasi-geodesic metric spaces by \cite[Theorem 1.4.13]{Large_Scale}.
\end{remark}

\subsubsection{Hyperbolicity}

There is a notion of hyperbolicity of a quasi-geodesic metric space that we will use as an invariant of a covered space similarly to asymptotic dimension. First of all, a $(L,C)$ \textit{quasi-geodesic triangle} in a metric space $(X,d_{X})$ is a triple $(\gamma_1, \gamma_2, \gamma_3)$ of quasi-isometric embeddings $\gamma_i: [0,L_i] \to X$ with parameters $L,C > 0$ such that \[\gamma_1(L_1) = \gamma_{2}(0), \quad \gamma_2(L_2) = \gamma_{3}(0), \quad \gamma_{3}(L_3) = \gamma_{1}(0).\] We call such a quasi-geodesic triangle $\delta$\textit{-slim} if there exists $\delta > 0$ such that \[\textrm{Im}(\gamma_i) \subset \bigcup_{x \in \textrm{Im}(\gamma_j) \cup \textrm{Im}(\gamma_k)}B(x,\delta),\] where $i,j,k \in \{1,2,3\}$ are all distinct.

\begin{definition}
Let $(X,d_{X})$ be a quasi-geodesic metric space. We say that $(X,d_{X})$ is \textit{quasi-hyperbolic} if there exist constants $L,C, \delta > 0$ such that every $(L',C')$ quasi-geodesic triangle in $(X,d_{X})$ is $\delta$-slim for all $L' \geq L$ and $C' \geq C$. 
\end{definition}

Note that quasi-hyperbolicity is a quasi-isometric invariant by \cite[Proposition 7.2.9]{Geometric_Group_Theory}. Hence we can declare a covered space $(X,\mathcal{Q})$ to be quasi-hyperbolic if the associated metric space $(X,d_{\mathcal{Q}})$ is quasi-hyperbolic. If a finitely generated group $G$ is quasi-hyperbolic with any (hence all) proper, left-invariant metric, it is common in the literature to simply call it a \textit{hyperbolic group} and we will follow this convention. We will now present basic results regarding quasi-hyperbolic metric space assembled from \cite[Chapter 7]{Geometric_Group_Theory} that will be used in Subsection \ref{sec: More Examples} and Subsection \ref{sec:A Decomposition Space of Hyperbolic Type}.

\begin{lemma}
\label{hyperbolicity_lemma}
\begin{enumerate}[(a)]
    \item Let $(X,d_X)$ be a quasi-geodesic metric space and $(Z,d_Z)$ a quasi-hyperbolic metric space. Then the existence of a quasi-isometric embedding $\phi:(X,d_X) \to (Z,d_Z)$ implies that $(X,d_X)$ is also quasi-hyperbolic.
    \item The hyperbolic plane $\mathbb{H}^2 = \{(x,y) \in \mathbb{R}^2 \, | \, y > 0\}$ with its usual hyperbolic metric is quasi-hyperbolic.
    \item Among the groups $\mathbb{Z}^n$, only the group $\mathbb{Z}$ is hyperbolic. Moreover, if $G$ is any hyperbolic group and $g \in G$ has infinite order, then the map \[\psi:(\mathbb{Z},d_{\mathbb{Z}}) \longrightarrow (G,d_G), \qquad n \longmapsto g^n \] is a quasi-isometric embedding.
    \item Any group that contain a subgroup isomorphic to $\mathbb{Z}^2$ is not hyperbolic. 
\end{enumerate}
\end{lemma}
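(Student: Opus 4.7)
For part (a), the plan is to transport a quasi-geodesic triangle in $(X,d_X)$ into one in $(Z,d_Z)$ by post-composing with $\phi$. Given a triangle $(\gamma_1,\gamma_2,\gamma_3)$ with parameters $(L',C')$ in $X$, the compositions $\phi\circ\gamma_i$ form a quasi-geodesic triangle in $Z$ whose parameters can be expressed in terms of $L',C'$ and the parameters of $\phi$. By choosing $L',C'$ sufficiently large at the outset, these transported parameters exceed the quasi-hyperbolicity thresholds of $Z$, so the triangle in $Z$ is $\delta$-slim. Pulling this slimness back through the lower bound $\tfrac{1}{L}d_X(x,y)-C\leq d_Z(\phi(x),\phi(y))$ yields a $\delta'$-slim condition in $X$ with inflated constant $\delta' = L\delta+LC$, which gives the quasi-hyperbolicity of $(X,d_X)$.

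For part (b), I would exploit the explicit description of geodesics in the Poincar\'e upper half-plane (vertical rays and semicircles orthogonal to $\mathbb{R}\times\{0\}$). A standard hyperbolic trigonometry computation, or the observation that ideal triangles have area $\pi$ and hence bounded inradius, shows that every ideal geodesic triangle has each side contained in the $\log(1+\sqrt{2})$-neighbourhood of the union of the other two. Since every geodesic triangle is inscribed in an ideal one, the same slim constant works in general. To promote this to \emph{quasi}-geodesic triangles, I would invoke the Morse stability lemma: in a genuinely hyperbolic metric space, every $(L',C')$ quasi-geodesic remains within a uniform distance $R(L',C',\delta)$ of a true geodesic, and combining this with the geodesic slimness yields quasi-hyperbolicity with a suitably enlarged constant.

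For part (c), the hyperbolicity of $\mathbb{Z}$ is immediate since its Cayley graph is a tree and all geodesic triangles are degenerate. To rule out hyperbolicity of $\mathbb{Z}^n$ for $n\geq 2$, I would exhibit, for every proposed $\delta$, an explicit non-slim geodesic triangle: the vertices $0$, $(N,0,\dots,0)$, $(0,N,0,\dots,0)$ with $N\gg\delta$ produce a triangle in which the midpoint of the hypotenuse lies at distance of order $N$ from both legs. For the second statement, the upper Lipschitz bound on $n\mapsto g^n$ is immediate from the triangle inequality and left-invariance. The crucial lower bound is the undistortion of infinite-order cyclic subgroups in hyperbolic groups; my plan is to deduce it from the Morse lemma together with strict positivity of the translation length $\tau(g) := \lim_{n\to\infty} d_G(e,g^n)/n$, which is a standard consequence of hyperbolicity for infinite-order elements.

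For part (d), assume for contradiction that $G$ is hyperbolic and contains a subgroup $H\cong\mathbb{Z}^2$ generated by commuting infinite-order elements $g,h$. Applying (c) to $g$ and $h$ and using commutativity $gh=hg$, I would show that the map $\Psi:\mathbb{Z}^2\to G$, $(m,n)\mapsto g^m h^n$, is a quasi-isometric embedding; the upper bound follows readily from (c), while the lower bound---the main obstacle---requires that the orbit forms a coarse flat plane in $G$. I expect to derive this undistortion either by a direct flat-plane argument (the quasi-flat in $G$ combined with hyperbolicity forces degeneration, contradicting linear independence of $g,h$) or by invoking the classical consequence of hyperbolicity that the centralizer of an infinite-order element is virtually cyclic, which already rules out $\mathbb{Z}^2\hookrightarrow G$. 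Once $\Psi$ is a quasi-isometric embedding, part (a) forces $\mathbb{Z}^2$ to be quasi-hyperbolic, contradicting the first half of (c).
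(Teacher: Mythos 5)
The paper does not prove this lemma at all: it is presented as a collection of ``basic results regarding quasi-hyperbolic metric spaces assembled from \cite[Chapter 7]{Geometric_Group_Theory}'', so the paper's entire argument is a citation. Your proposal supplies the actual standard proofs, and they are essentially sound. Part (a) is exactly the textbook transport-and-pull-back argument; note only that you should normalize $L_\phi \geq 1$ so that composing with $\phi$ genuinely pushes the parameters of the transported triangle above the thresholds of $Z$, which the paper's definition of quasi-hyperbolicity requires. Parts (b)--(d) are the standard arguments, but each ultimately rests on an imported black box of comparable depth to the statement itself: the Morse stability lemma in (b), positivity of the translation length in (c), and the virtual cyclicity of centralizers of infinite-order elements in (d). That is perfectly consistent with the lemma's role as background material, though two small points deserve attention. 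In (c), degeneracy of \emph{geodesic} triangles in the Cayley graph of $\mathbb{Z}$ does not by itself give quasi-hyperbolicity in the paper's sense (which quantifies over quasi-geodesic triangles); you need the same Morse-lemma upgrade you invoke in (b). In (d), your primary route --- proving directly that $(m,n)\mapsto g^m h^n$ is a quasi-isometric embedding --- is the hard part and is not carried out; however, your fallback via virtually cyclic centralizers is complete and is the standard proof, so the part stands. Overall: correct, and strictly more informative than the paper, which proves nothing here.
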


\section{Uniform Metric Spaces on Locally Compact Groups}
\label{Chapter_Uniform_Metric_Spaces}

In this section we investigate coverings on path-connected, locally compact groups that reflect the group structure. While starting generally, we quickly focus in on stratified Lie groups and solvable Lie groups to obtain concrete examples. Finally, we examine a hyperbolic covering on the special linear group $SL(2,\mathbb{R})$. In Section \ref{Chapter_Decomposition_Spaces} we will start to build decomposition spaces on top of these coverings. The metric space machinery developed in Section \ref{sec: From Admissible Coverings to the Large Scale Setting} together with results in this section will be used in Subsection \ref{sec: Geometric Embeddings} and Section \ref{sec: Concrete Settings} to show that certain embeddings between different decomposition spaces are impossible. 

\subsection{Uniform Metric Spaces}

We begin by recalling some basic definitions related to locally compact groups. A \textit{locally compact group} $G$ is a locally compact Hausdorff space with a group structure such that the multiplication and inversion are continuous maps. A subset $A \subset G$ is called \textit{symmetric} if $A^{-1} = A$, where $A^{-1} := \{y^{-1} \,| \, y \in A \}$. One can always find a symmetric and precompact neighbourhood of the identity on a locally compact group $G$ by considering $U^{-1}U$, where $U$ is a precompact neighbourhood of the identity. \par 
On any locally compact group $G$ there exists a unique \textit{left Haar measure} $\mu$ up to scaling, that is, a non-zero Radon measure satisfying $\mu(gE) = \mu(E)$ for $E \subset G$ and $g \in G$. The analogous statement also holds true for right Haar measures. Locally compact groups where the right and left Haar measure coincide are called \textit{unimodular}. We will later consider the spaces $L^{p}(G) := L^{p}(G,\mathcal{B},\mu)$ for $p \in [1,\infty)$, where $\mathcal{B}$ is the Borel sigma-algebra and $\mu$ is a fixed left Haar measure. \par 
In Subsections \ref{sec: Carnot Groups} and \ref{sec: More Examples} we will consider \textit{lattices} in locally compact groups $G$; they are discrete subgroups $\Gamma$ in $G$ such that there exists a $G$-invariant Borel measure $\mu_{G/\Gamma}$ on the quotient $G/\Gamma$ with $\mu_{G/\Gamma}(G/\Gamma) < \infty$. The prototypical example to have in mind is the lattice $\mathbb{Z}^n$ inside the locally compact group $\mathbb{R}^n$. The concrete examples considered in Section \ref{sec: Concrete Settings} will all be Lie groups. We refer the readers to \cite{Folland} and \cite{Warner} for basic material about locally compact groups and Lie groups, respectively. \par

Let $G$ be a locally compact group that is path-connected and fix a Haar measure $\mu$ on $G$. We will associate to $G$ a metric space that will reflect the group structure. Pick a precompact and symmetric set $Q_{0} \subset G$ with non-void interior called a \textit{reference set} and consider the \textit{continuous covering} $\{gQ_{0}\}_{g \in G}$ in the language of \cite{Hans_2}. The precompactness of $Q_{0}$ insures that $\mu(Q_{0}) < \infty$ while the non-void interior guarantees that $0 < \mu(Q_{0})$. It follows from the symmetry of $Q_{0}$ and \cite[Theorem 4.1 (A)]{Hans_2} that there exist elements $\{g_i\}_{i \in I}$ in $G$ such that $\mathcal{U} := \mathcal{U}(G) := \{g_i Q_{0} \}_{i \in I}$ defines an admissible covering on $G$. \par 
We simplify the notation $Q_i := g_{i} Q_{0}$ and assume without loss of generality that $g_0 = e$ to make the notation compatible with the one already in place for the reference set $Q_{0}$. Furthermore, we have from  \cite[Theorem 4.1 (B)]{Hans_2} that any other family $\{h_j\}_{j \in J}$ in $G$ with the same property defines an equivalent covering. Moreover, the specific choice of the reference set $Q_0$ is easily seen to be irrelevant. Hence we can always choose $Q_0$, and hence $Q_i$, to be open if we so desire. We refer to $\mathcal{U}(G)$ as the \textit{uniform covering} of the path-connected, locally compact group $G$. Notice that this notation is compatible with Example \ref{grid_example} since $\mathcal{U}(\mathbb{R}^n)$ is the uniform covering on $\mathbb{R}^n$.

\begin{lemma}
The uniform covering of any path-connected, locally compact group $G$ is a concatenation.
\end{lemma}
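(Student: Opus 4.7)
The plan is to verify the equivalent characterization recorded just after the definition of concatenation in the excerpt: it suffices to show that for any two points $x, y \in G$ there exists a finite $\mathcal{U}(G)$-chain from $x$ to $y$. Admissibility of $\mathcal{U}(G)$ is already guaranteed by the choice of the indexing elements $\{g_i\}_{i \in I}$ via \cite[Theorem 4.1 (A)]{Hans_2}, so the only remaining content is a path-connectedness argument.

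First, I would exploit the observation made just before the lemma: the reference set $Q_0$ may be chosen open without changing the covering up to equivalence. Under this choice every translate $Q_i = g_i Q_0$ is open, and the family $\{Q_i\}_{i \in I}$ is therefore an \emph{open} cover of $G$. This is the only slightly delicate move in the argument; if one were unwilling to replace $Q_0$ by an open set, one would have to work with the interior of $Q_0$ and re-verify admissibility, which is unnecessarily cumbersome.

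Next, fix $x, y \in G$. By path-connectedness of $G$ choose a continuous path $\gamma : [0,1] \to G$ with $\gamma(0) = x$ and $\gamma(1) = y$. The pullbacks $\gamma^{-1}(Q_i)$ form an open cover of the compact interval $[0,1]$, so the Lebesgue number lemma produces a partition $0 = t_0 < t_1 < \cdots < t_n = 1$ and indices $i_1, \ldots, i_n \in I$ with $\gamma([t_{k-1}, t_k]) \subset Q_{i_k}$ for each $1 \leq k \leq n$. Then for each $1 \leq k \leq n-1$ the point $\gamma(t_k)$ lies in $Q_{i_k} \cap Q_{i_{k+1}}$, which is therefore non-empty. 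Hence $Q_{i_1}, Q_{i_2}, \ldots, Q_{i_n}$ is a $\mathcal{U}(G)$-chain from $x$ to $y$ and $\mathcal{U}(G)(n,x,y) \neq \emptyset$, completing the proof.

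I do not anticipate a serious obstacle: the proof is essentially the standard compactness-plus-path-connectedness argument for showing that every open cover of a path-connected space yields a chain relation. The only point that requires a line of justification rather than a one-word citation is the reduction to open $Q_i$, which the authors' own remark preceding the statement readily provides.
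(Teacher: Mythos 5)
Your proof is correct and follows essentially the same route as the paper's: choose the reference set $Q_0$ open, pull the resulting open cover of $G$ back along a continuous path $\gamma:[0,1]\to G$ from $x$ to $y$, and use compactness of $[0,1]$ to extract a finite chain. The one genuine difference is that you invoke the Lebesgue number lemma to produce an \emph{ordered} partition $0=t_0<\dots<t_n=1$ with $\gamma([t_{k-1},t_k])\subset Q_{i_k}$, so that consecutive sets visibly share the point $\gamma(t_k)$; the paper merely extracts an unordered finite subcover of $\mathrm{Im}(\gamma)$ and asserts that $\mathcal{U}(n,x,y)\neq\emptyset$, leaving implicit the step of arranging the subcover into a sequence in which consecutive members intersect. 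Your version closes that small gap at no extra cost, so if anything it is the more complete of the two arguments.
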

\begin{proof}
Fix $g,h \in G$ and let $\gamma:[0,1] \to G$ denote a continuous path such that $\gamma(0) = g$ and $\gamma(1) = h$. We choose $Q_0$ to be open and consider the sets \[U_{i} := \gamma^{-1}(Q_{i} \cap \textrm{Im}(\gamma)), \quad i \in I.\] The collection $(U_i)_{i \in I}$ form an open covering of $[0,1]$ and the compactness of the interval $[0,1]$ implies that there exists a finite sub-covering $U_{i_{1}}, \dots, U_{i_n}$. Thus \[\textrm{Im}(\gamma) \subset  \bigcup_{l = 1}^{n} Q_{i_{l}},\] and we have that $\mathcal{U}(x,y,n) \ne \emptyset.$ The necessity of requiring that $G$ is path-connected follows from considering $G = \mathbb{Z}_2$.
\end{proof}
In our language, we obtain that $(G,\mathcal{U})$ is a covered space such that the quasi-isometry class of $(G,d_{\mathcal{U}})$ does not depend on the construction. We will call the resulting metric space $(G,d_\mathcal{U})$ the \textit{uniform metric space} on the path-connected, locally compact group $G$. We make the convention that a covering $\mathcal{U}$ on a path-connected, locally compact group $G$ is assumed to be the uniform covering unless stated otherwise. 

\begin{remark}
Uniform metric spaces have also been considered by Ren\'e Koch in his Ph.D. Thesis \cite{ThesisRene} through a slightly different construction: The author defines a metric $d_W$ on any locally compact group $G$ by fixing a symmetric and precompact unit neighbourhood $W$ and defining the distance $d_{W}(x,y)$ between two distinct points $x,y \in G$ to be the minimal number $m$ such that $yx^{-1} \in W^{m}$. This description is convenient and makes it obvious that the resulting metric $d_{W}$ on $G$ is left-invariant. The reader should be aware that \cite{ThesisRene} allows the metric to take infinite values as he also consider locally compact groups that are not necessarily path-connected. 
\end{remark}

A metric $d$ on a set $X$ is said to be \textit{proper} if the balls induced by $d$ are precompact. This coincides with our use of the term proper in the proof of Theorem \ref{symmetries_theorem} and in Example \ref{heisenberg_3_example}. The following result shows that we can sometimes understand the uniform metric space on path-connected, locally compact groups by understanding the large scale geometry of a finitely generated subgroup.

\begin{theorem}
\label{uniform_structure}
Let $G$ be a path-connected, locally compact group and let $d$ be a proper, left-invariant metric on $G$ that is compatible with the topology on $G$. Assume $N$ is a finitely generated subgroup of $G$ that is a net in $G$ and that $d$ restricts to a locally finite metric on $N$. Then the uniform metric space $(G,d_{\mathcal{U}})$ is quasi-isometric to the space $(N,d)$ where $d$ is any proper, left-invariant metric on $N$.
\end{theorem}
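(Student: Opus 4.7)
The plan is to deduce the result from the chain of quasi-isometries
\[
(G, d_\mathcal{U}) \;\simeq\; (N, d|_N) \;\simeq\; (N, d'),
\]
where $d'$ is an arbitrary proper, left-invariant metric on $N$. The second equivalence is essentially for free: the hypothesis that $d$ restricts to a locally finite metric on the discrete, finitely generated group $N$ makes $d|_N$ a proper, left-invariant metric on $N$, and \cite[Theorem 1.3.12]{Large_Scale} (already invoked in Example \ref{fin_gen_groups}) then identifies all such metrics up to quasi-isometry. So the heart of the proof is the first equivalence, which I would realise through the inclusion $\iota \colon (N, d|_N) \hookrightarrow (G, d_\mathcal{U})$.

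I plan to verify two properties of $\iota$: that its image is a net in $(G, d_\mathcal{U})$, and that it is a quasi-isometric embedding. For the net condition, I use that $N$ is a net in $(G,d)$, so any $g \in G$ is $d$-close to some $n \in N$; the translate $g^{-1}n$ then lies in the precompact set $\overline{B_d(e,M)}$, which is covered by finitely many $\mathcal{U}$-cells, yielding a uniform $d_\mathcal{U}$-bound $d_\mathcal{U}(g,n) \leq K$. For the quasi-isometric embedding, the lower bound is the easy direction: precompactness of $Q_0$ forces $\mathrm{diam}_d(Q_0^{2}) < \infty$, and a $\mathcal{U}$-chain of length $k$ from $m$ to $n$ yields $d(m,n) \leq C_1 k + C_2$, i.e.\ $d|_N(m,n) \leq C_1\, d_\mathcal{U}(m,n) + C_2$. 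For the upper bound $d_\mathcal{U}(m,n) \leq L\, d|_N(m,n) + C$, let $\Sigma \subset N$ be a finite symmetric generating set; again by \cite[Theorem 1.3.12]{Large_Scale} it suffices to replace $d|_N$ by the word metric $d_\Sigma$ and to prove $d_\mathcal{U}(e, \sigma_1 \cdots \sigma_m) \leq K_0\, m$ for $\sigma_i \in \Sigma$. The triangle inequality combined with left-invariance reduces this to finiteness of $K_0 := \max_{\sigma \in \Sigma} d_\mathcal{U}(e,\sigma)$, which holds because $\Sigma$ is finite and each $d_\mathcal{U}(e,\sigma)$ is finite by the concatenation property.

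The main technical obstacle is that the discrete metric $d_\mathcal{U}$ built from the selection $(g_i)_{i \in I}$ is not strictly left-invariant, whereas the triangle-inequality step above (and the net argument) tacitly rely on left-invariance. I plan to resolve this by replacing $d_\mathcal{U}$ with a quasi-isometric, genuinely left-invariant companion; the natural candidate is the Koch-type metric $d_W(x,y) := \min\{m \in \mathbb{N} : x^{-1}y \in W^{m}\}$ with $W := Q_0$, whose quasi-equivalence with $d_\mathcal{U}$ follows from the equivalence of coverings established in \cite[Theorem 4.1]{Hans_2} together with the symmetry of $Q_0$. With this substitution in place the bounds above assemble into the required quasi-isometric embedding, and the theorem follows by composition with the finitely-generated-group identification.
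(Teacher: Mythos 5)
Your argument is correct, but it takes a genuinely different route from the paper's. You keep an arbitrary realization $\mathcal{U}=\{g_iQ_0\}_{i\in I}$ of the uniform covering and prove directly that the inclusion $(N,d|_N)\hookrightarrow (G,d_{\mathcal{U}})$ is a quasi-isometry: chain length against $\mathrm{diam}_d(Q_0)$ for one bound, a word-metric/triangle-inequality argument for the other, and compactness of $\overline{B_d(e,M)}$ for the net condition. Crucially, you correctly identify the one real obstacle --- that $d_{\mathcal{U}}$ is not left-invariant --- and repair it by passing to the Koch metric $d_W$ with $W=Q_0$. The paper instead dissolves that obstacle at the source: it \emph{chooses} the uniform covering to be $\{nB_d(e,M)\}_{n\in N}$ with $M$ a net constant for $N$ in $(G,d)$, checks that this is an admissible concatenation (admissibility from local finiteness of $d|_N$, concatenation from $B(e,kM)\subset B(e,M)^{k*}$ and $\bigcup_{k}B(e,kM)=G$), and identifies it with the uniform covering via \cite[Theorem 4.1 (B)]{Hans_2}. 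With that choice the covering is manifestly invariant under left translation by $N$, so $d_{\mathcal{U}}$ restricted to $N$ is itself a proper left-invariant metric and \cite[Theorem 1.3.12]{Large_Scale} finishes in one step, the net condition being the observation that every $g\in G$ lies in some $B(n,M)$, a set of $d_{\mathcal{U}}$-diameter one. Both proofs rest on the same two external inputs (independence of the realization of $\mathcal{U}(G)$, and independence of the proper left-invariant metric on a finitely generated group); the paper's choice of reference set buys a shorter proof, while yours makes the geometry of the inclusion $N\hookrightarrow G$ explicit and works for any realization of the covering. The one place where you should add a couple of lines is the comparison $d_W\simeq d_{\mathcal{U}}$: the cited \cite[Theorem 4.1]{Hans_2} concerns discrete admissible selections, whereas $d_W$ is built from the continuous covering $\{gQ_0\}_{g\in G}$, so you need the short explicit check that $x^{-1}y\in Q_0^{m}$ yields a $\mathcal{U}$-chain of length $O(m)$ through the intermediate points $xq_1\cdots q_j$, and conversely that a chain of length $k$ forces $x^{-1}y\in Q_0^{2k}$.
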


\begin{proof}
Since $N$ is a net in $G$ we can find a constant $M > 0$ such that \[\mathcal{U} = \{nB(e,M)\}_{n \in N} = \{B(n,M)\}_{n \in N}\] is a covering on $G$. By picking a left Haar measure $\mu$ on $G$ it follows that $0 < \mu(B(n,M)) < \infty$ since the balls $B(n,M)$ for $n \in N$ are precompact due to the properness of the metric. If we can show that $\mathcal{U}$ is a concatenation, then it follows that $\mathcal{U}$ is the uniform covering on $G$. \par 
Since $d$ restricts to a locally finite left-invariant metric on $N$ we have \[|B(n,R) \cap N| = |B(e,R) \cap N| < \infty\] for every $R \geq 0$. Assume that $B(n,M) \cap B(m,M) \ne \emptyset$ for $n,m \in N$. Then the triangle inequality implies that $m \in B(n,2M)$ and we have the bound \[N_\mathcal{U} \leq |B(e,2M) \cap N| < \infty,\] where $N_\mathcal{U}$ is the admissibility constant of the covering $\mathcal{U}$. Hence $\mathcal{U}$ is admissible and it is straightforward to see that $\mathcal{U}$ is a concatenation since \[B(e,kM) \subset B(e,M)^{k*}, \qquad \bigcup_{k = 1}^{\infty}B(e,kM) = G.\] \par 
By picking $n \in B(n,M)$ we conclude that $(N,d_\mathcal{U})$ is quasi-isometric to the uniform metric space $(G,d_{\mathcal{U}})$. Moreover, it is clear that $d_\mathcal{U}$ is a left-invariant metric on $N$ by construction. The result follows since the quasi-isometry class of a finitely generated group does not depend on the choice of a proper, left-invariant metric.
\end{proof}

Note that the uniform metric space $(G,d_{\mathcal{U}})$ is also quasi-isometric to $(G,d)$ since $N$ was a net in $G$. However, two left-invariant and compatible metrics on $G$ are not necessarily quasi-isometric. While the uniform metric space is quasi-isometric to $N$ with \textit{any} proper, left-invariant metric, this does not hold for $G$. \par
Although the number of assumptions in Theorem \ref{uniform_structure} might look overwhelming at first, there are many settings of this type. In particular, any left-invariant Riemannian metric on a connected Lie group $G$ induces a left-invariant and proper metric $d$ on $G$. Notice that any two left-invariant Riemannian metrics on a Lie group induce quasi-isometric distances. \par
It is important to keep in mind that an arbitrary locally compact group might not have a proper, left-invariant metric compatible with its topology. In fact, a classical result of Struble \cite{Metrics_on_locally_compact_groups} gives that the existence of a compatible, proper, and left-invariant metric on $G$ is equivalent to $G$ being second countable. Therefore, we restrict our attention to second countable and path-connected locally compact groups to avoid pathological examples.

\subsection{Stratified Lie Groups}
\label{sec: Carnot Groups}

We will now investigate a large class of examples within nilpotent Lie groups called \textit{stratified Lie groups}. In this setting, we will obtain stronger statements in Proposition \ref{Carnot_group_result} and Theorem \ref{growth_vector_result} than what was possible for general path-connected, locally compact groups. 

\begin{definition}
A \textit{stratified Lie group} $G$ is a connected and simply connected Lie group such that its Lie algebra $\mathfrak{g}$ has a \textit{stratification} \[\mathfrak{g} = V_1 \oplus \cdots \oplus V_s, \qquad [V_1, V_j] = V_{j+1}, \quad j=1, \dots, s-1, \qquad [V_1, V_s] = 0.\] The \textit{homogeneous dimension} of a stratified Lie algebra is defined to be \[Q := \sum_{j = 1}^{s}j \cdot \textrm{dim}_{\mathbb{R}}(V_j).\]
\end{definition}
The homogeneous dimension of a stratified Lie group is by definition the homogeneous dimension of its Lie algebra and is independent of the chosen stratification of the Lie algebra by \cite[Proposition 1.17]{Le_Donne_1}. The Lie group exponential map from $\mathfrak{g}$ to $G$ is a diffeomorphism for stratified Lie groups. Moreover, the Haar measure on $G$ is simply the push-forward of the Lebesgue measure on $\mathfrak{g}$ under the exponential map. In particular, every stratified Lie group is unimodular and diffeomorphic to Euclidean space. \par 
On stratified Lie groups there is a class of metrics that are intimately tied with the stratification of the Lie algebra: Fix an inner product $\langle \cdot, \cdot\rangle$ on $V_1$ and left translate this to obtain a Riemannian metric $g$ on $G$ that is only defined on the subbundle \[\mathcal{H} \subset TM, \qquad \mathcal{H}_x := dL_{x}V_1, \quad x\in G.\] The metric $g$ is called a \textit{sub-Riemannian metric} on $G$. An absolutely continuous curve $\gamma:[a,b] \to G$ is called \textit{horizontal} if \[dL_{\gamma(t)}^{-1}(\gamma(t)) \in V_1 \subset \mathfrak{g},\] for almost every $t \in [a,b]$. This gives a left-invariant distance function $d_{CC}$ by considering the infimum over horizontal curves: For $x,y \in G$ we define \[d_{CC}(x,y) := \inf_{\gamma}\int_{a}^{b}|\dot{\gamma}(t)|\, dt,\] where the infimum is taken over all horizontal curves such that $\gamma(a) = x$ and $\gamma(b) = y$. The distance function $d_{CC}$ is called the \textit{Carnot-Carath\'{e}odory distance} on $G$. The completeness of $(G,d_{CC})$ follows from Chow's Theorem \cite[Chapter 2]{Montgomery} in sub-Riemannian geometry. It is also common to refer to a stratified Lie group $G$ together with the data $(\mathcal{H},g)$ as a \textit{Carnot group} in the sub-Riemannian literature. \par 
Finally, recall that if $X_1, \dots, X_n$ is a basis for $\mathfrak{g}$ then the numbers $\{c_{ij}^{k}\}$ defined by \[[X_i,X_j] = \sum_{k = 1}^n c_{ij}^{k}X_k, \qquad i,j,k = 1, \dots, n,\] are called the \textit{structure constants} of the Lie algebra $\mathfrak{g}$ in the basis $X_1, \dots, X_n$. We call a Lie group \textit{realizable over the rationals} if there exists a basis for its Lie algebra such that the resulting structure constants are rational numbers.

\begin{proposition}
\label{Carnot_group_result}
Let $G$ be a stratified Lie group that is realizable over the rationals and let $N \subset G$ be any lattice in $G$. Then the uniform metric space $(G,d_\mathcal{U})$ is quasi-isometric to $(N,d)$, where $d$ is any proper, left-invariant metric on $N$.
\end{proposition}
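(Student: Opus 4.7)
The plan is to verify the hypotheses of Theorem \ref{uniform_structure} for the pair $(G,N)$ and then invoke it directly. Since a stratified Lie group is by definition connected and simply connected, it is in particular path-connected, and every lattice in $G$ is discrete by definition. It remains to produce a proper, left-invariant, compatible metric $d$ on $G$ and to check that such a lattice $N$ is finitely generated, a net in $G$, and locally finite with respect to $d$.

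First I would fix any left-invariant Riemannian metric on $G$ (which exists because $G$ is a connected Lie group) and let $d$ be the induced distance. Since $G$ is connected, $d$ is a metric compatible with the topology, it is left-invariant by construction, and it is proper because closed balls of finite radius are compact (Hopf--Rinow applied to the complete left-invariant Riemannian manifold $G$). This takes care of the ambient metric hypothesis in Theorem \ref{uniform_structure}.

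The key structural input is a classical theorem of Malcev on lattices in simply connected nilpotent Lie groups: since $G$ is stratified (hence nilpotent and simply connected) and realizable over the rationals, Malcev's theorem guarantees that lattices in $G$ exist and that every lattice $N \subset G$ is finitely generated, torsion-free, and \emph{cocompact}, i.e.\ $G/N$ is compact. Cocompactness immediately implies that $N$ is a net in $(G,d)$: pick a compact fundamental domain $K \subset G$ and set $M := \sup_{x \in K}d(e,x) < \infty$, then every $g \in G$ lies within distance $M$ of some element of $N$ by left-invariance. Local finiteness of $d|_N$ is also immediate, since for any $R > 0$ the ball $B(e,R) \subset G$ is compact by properness of $d$ and $N$ is discrete, so $B(e,R) \cap N$ is finite.

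With all four hypotheses of Theorem \ref{uniform_structure} verified, that theorem yields $(G,d_{\mathcal{U}}) \simeq (N,d')$ for any proper, left-invariant metric $d'$ on $N$, which is precisely the statement of the proposition. The only genuine obstacle is invoking Malcev's theorem correctly, namely checking that ``realizable over the rationals'' in the sense defined in the excerpt is the same rationality condition that guarantees both the existence of a lattice and the cocompactness and finite generation of every lattice; this is standard in the nilpotent setting (see, e.g., Raghunathan's or Corwin--Greenleaf's treatment), so no further calculation is needed.
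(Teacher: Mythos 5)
Your proposal is correct and follows essentially the same route as the paper: both verify the hypotheses of Theorem \ref{uniform_structure} using the Malcev/Raghunathan structure theory of lattices in simply connected nilpotent Lie groups (finite generation, uniformity, discreteness giving local finiteness). The only cosmetic difference is that you use a left-invariant Riemannian metric where the paper uses the Carnot--Carath\'eodory distance induced from $V_1$; both are proper, left-invariant and compatible, so this changes nothing.
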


\begin{proof}
Fix a stratification $\mathfrak{g}= V_1 \oplus \cdots \oplus V_s$ for the Lie algebra $\mathfrak{g}$ of $G$. The existence of a lattice $N$ in $G$ is equivalent to the requirement that $G$ is realizable over the rationals by \cite[Theorem 2.12]{Discrete_Subgroups}. Moreover, every lattice in a stratified Lie group is a finitely generated nilpotent group \cite[Theorem 2.10]{Discrete_Subgroups} that is \textit{uniform} \cite[Theorem 2.1]{Discrete_Subgroups}, that is, the quotient space $G/N$ is compact. \par 
Fix a Carnot-Carath\'eodory distance $d_{CC}$ on $G$ arising from an inner product on $V_1$ and notice that $N$ is then a net since we can write \[G = \bigcup_{n \in N}nC,\] where $C$ is some compact subset. Moreover, it follows from \cite[Corollary 5.5.9]{Geometric_Group_Theory} that the quasi-isometry class of $(N,d_{CC})$ does not depend on the choice of the lattice. The inclusion \[\overline{B_{d_{CC}}(e,R) \cap N} \subset \overline{B_{d_{CC}}(e,R)}\] together with the properness of $d_{CC}$ implies that $\overline{B_{d_{CC}}(e,R) \cap N}$ is finite due to the discreteness of $N$. Hence the metric $d_{CC}$ restricted to $N$ is locally finite and the result follows from Theorem \ref{uniform_structure}.
\end{proof}

\begin{remark}
Whenever the dimension of the Lie group is less than seven, the assumption that the Lie group is realizable over the rationals is automatically satisfied. This follows from the classification of real nilpotent Lie algebras with low dimension given in \cite{Lie_Algebras_Low_Dim}.
\end{remark}

Another useful invariant of a finitely generated group is its growth type. We will not go into the explicit definition of this since it slightly cumbersome and is well explained in \cite[Chapter 6]{Geometric_Group_Theory}. The idea is that the number of elements in $B(e,n)$ for a finitely generated group $N$ with proper, left-invariant metric is not a quasi-isometric invariant. However, the \textit{growth type} (e.g. if it grows linearly, quadratically, or exponentially) is a quasi-isometric invariant of the group. We will illustrate how this can be used in the following example. 

\begin{example}
\label{Heisenberg_example}
For $n \in \mathbb{N}$ we consider the Heisenberg group $(\mathbb{H}_{2n+1},*)$  consisting of all matrices on the form \[\left\{\begin{pmatrix} 1 & \mathbf{a} & c \\ 0 & I_{n \times n} & \mathbf{b} \\ 0 & 0 & 1\end{pmatrix} \, \Big| \, \mathbf{a}, \mathbf{b} \in \mathbb{R}^{n}, \,c \in \mathbb{R}\right\},\] where the operation $*$ denotes the usual matrix multiplication. It is a connected and simply connected Lie group whose Lie algebra $\mathfrak{g}_{2n+1}$ can be identified as a vector space with $\mathbb{R}^{2n+1} = \mathbb{R}^{2n} \oplus \mathbb{R}$. If $e_1, \dots, e_{2n+1}$ is the standard basis for $\mathbb{R}^{2n+1}$ then the Lie bracket satisfies \[[e_i,e_j] = \delta_{i + n,j}e_{2n+1}, \quad i \leq j < 2n+1, \qquad [e_i,e_{2n+1}] = 0.\] \par 
Fix an inner product $\langle \cdot, \cdot \rangle$ on $\mathbb{R}^{2n} \subset \mathfrak{g}_{2n+1}$ making the basis $e_1, \dots, e_{2n}$ orthonormal. We can equip $(\mathbb{H}_{2n+1},*)$ with a sub-Riemannian metric $g$ by left translating $\langle \cdot, \cdot \rangle$. The subset $\mathbb{Z}^{2n+1} \subset \mathbb{H}_{2n+1}$ is a finitely generated subgroup. The metric $d_{CC}$ restricts to a locally finite metric on $\mathbb{Z}^{2n+1}$ such that $\mathbb{Z}^{2n+1}$ is a net in $\mathbb{H}_{2n+1}$ due to the reasons pointed out in the proof of Proposition \ref{Carnot_group_result}. Hence by Theorem \ref{uniform_structure} it follows that the uniform metric space $(\mathbb{H}_{2n+1},d_{\mathcal{U}})$ is quasi-isometric to $(\mathbb{Z}^{2n+1},*)$ with any proper, left-invariant metric. A tedious but straightforward computation shows that the group $(\mathbb{Z}^{2n+1},*)$ has polynomial growth of order $2n+2$ while $(\mathbb{Z}^{k},+)$ has polynomial growth of order $k$. Since growth type is a quasi-isometric invariant by \cite[Corollary 6.2.6]{Geometric_Group_Theory} we have that \[(\mathbb{H}_{2n+1},d_{\mathcal{U}})\not \simeq (\mathbb{H}_{2m+1},d_{\mathcal{U}}), \, \, m \neq n, \qquad (\mathbb{H}_{2s+1},d_{\mathcal{U}})\not \simeq (\mathbb{R}^k,d_{\mathcal{U}}), \, \, k \neq 2s +2.\] However, since $(\mathbb{Z}^{2n+1},*)$ and $(\mathbb{Z}^{2n+2},+)$ have the same polynomial growth we need a different approach to show that $(\mathbb{H}_{2n+1},d_{\mathcal{U}})$ is not quasi-isometric to $(\mathbb{R}^{2n+2},d_{\mathcal{U}})$. \par 
Assume by contradiction that $(\mathbb{H}_{2n+1},d_{\mathcal{U}}) \simeq (\mathbb{R}^{2n+2},d_\mathcal{U})$. It follows from \cite[Corollary 6.3.16]{Geometric_Group_Theory} that $(\mathbb{Z}^{2n+1},*)$ then would have a finite index subgroup isomorphic to $(\mathbb{Z}^{2n+2},+)$. 
By intersecting all the conjugates of $(\mathbb{Z}^{2n+2},+)$ in $(\mathbb{Z}^{2n+1},*)$ one can assure that there exists a normal abelian subgroup of $(\mathbb{Z}^{2n+1},*)$ with finite index. The reason the intersection still has finite index is due to the easily verifiable formula \[|G:B\cap C| \leq |G:B| \cdot |G:C|,\] when $B,C$ are subgroups of $G$ with finite index. However, since $(\mathbb{Z}^{2n+1},*)$ is nilpotent and torsion free, it follows from \cite[Lemma 3.1]{Residual} that this forces $(\mathbb{Z}^{2n+1},*)$ to be abelian. Since this is not the case the claim follows.
\end{example}

\begin{remark}
The uniform covering $\mathcal{U}(\mathbb{H}_{3})$ is can be considered on $\mathbb{R}^3$ since $\mathbb{H}_3$ is diffeomorphic to $\mathbb{R}^3$. There, it is precisely the covering $\mathcal{P}$ introduced in Example \ref{heisenberg_3_example}. It thus follows from Example \ref{Heisenberg_example} that the two coverings $\mathcal{P}$ and $\mathcal{U}$ in Example \ref{heisenberg_3_example} are not equivalent coverings.
\end{remark}

Given a stratified Lie group $G$ with Lie algebra $\mathfrak{g} = V_1 \oplus \cdots \oplus V_s$, we call the multi-index \[\mathfrak{G}(G) := (n_{1}, \dots, n_{s}),\] the \textit{growth vector} of $G$, where $n_{i} := \textrm{dim}_{\mathbb{R}}(V_i)$ for $i = 1, \dots , s.$ The argument we used in the last part of Example \ref{Heisenberg_example} does not generalize easily. We remedy this by proving a stronger statement about when two uniform metric spaces on different stratified Lie groups can not be quasi-isometric.

\begin{theorem}
\label{growth_vector_result}
Let $G$ be a stratified Lie group and assume that $N \subset G$ is a lattice in $G$. Then $N$ has polynomial growth type of order equal to the homogeneous dimension of $G$. Let $H$ be another stratified Lie group that is realizable over the rationals such that the uniform metric spaces $(G,d_{\mathcal{U}})$ and $(H,d_{\mathcal{U}})$ are quasi-isometric. Then their growth vectors $\mathfrak{G}(G)$ and $\mathfrak{G}(H)$ have to be equal. 
\end{theorem}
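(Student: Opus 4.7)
My plan is to prove the two statements separately, combining the Bass--Guivarc'h formula with the Malcev correspondence for the first, and Pansu's quasi-isometric rigidity of Carnot groups for the second. To begin with the first statement: the lattice $N$ is a torsion-free finitely generated nilpotent group, and the Bass--Guivarc'h formula gives its polynomial growth order as
\[
d(N) = \sum_{i = 1}^{s} i \cdot \mathrm{rank}_{\mathbb{Z}}\bigl(C_{i-1}(N)/C_i(N)\bigr).
\]
The key geometric input is the Malcev correspondence: since $G$ is simply connected and nilpotent, the lower central series of $N$ arises (up to commensurability preserving ranks) as the intersection with $N$ of the lower central series of $G$, and each quotient $C_{i-1}(N)/C_i(N)$ becomes a lattice in the abelian Lie group $C_{i-1}(\mathfrak{g})/C_i(\mathfrak{g})$. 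The stratification relations $[V_1, V_j] = V_{j+1}$ and $[V_1, V_s] = 0$ imply $C_j(\mathfrak{g}) = V_{j+1} \oplus \cdots \oplus V_s$, so $C_{i-1}(\mathfrak{g})/C_i(\mathfrak{g}) \cong V_i$ and the rank of the corresponding lattice is $n_i = \dim_{\mathbb{R}} V_i$. Summing yields $d(N) = \sum_{i=1}^s i \cdot n_i = Q$, as claimed.

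For the second statement, I would first reduce the uniform-metric quasi-isometry to a quasi-isometry of lattices. Choose lattices $N_G \subset G$ and $N_H \subset H$; by Proposition \ref{Carnot_group_result} applied on both sides we obtain $(N_G, d_G) \simeq (G, d_\mathcal{U}) \simeq (H, d_\mathcal{U}) \simeq (N_H, d_H)$ for any proper left-invariant metrics on $N_G$ and $N_H$. Hence the hypothesis translates into a quasi-isometry between two finitely generated torsion-free nilpotent groups equipped with their word metrics.

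At this stage I would invoke Pansu's celebrated quasi-isometric rigidity theorem, which asserts that the associated graded Carnot Lie group is a quasi-isometry invariant (up to isomorphism of Carnot groups) of a finitely generated torsion-free nilpotent group. Concretely, the asymptotic cone of $N_G$ with its word metric is isometric to the real Malcev completion of $N_G$, taken as graded, and equipped with a Carnot--Carath\'eodory metric. Since $N_G$ is a lattice in the already-stratified Lie group $G$, the Malcev completion of $N_G$ is $G$ itself, and its associated graded Carnot group coincides with $G$; similarly for $H$. The quasi-isometry $N_G \simeq N_H$ therefore forces an isomorphism of Carnot groups $G \cong H$, which in particular makes the stratification layers equidimensional, that is, $\mathfrak{G}(G) = \mathfrak{G}(H)$.

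The hardest ingredient is unquestionably Pansu's theorem. Merely noting that the polynomial growth orders coincide, $Q(G) = Q(H)$, which follows from the first statement and the quasi-isometric invariance of growth type, is insufficient for equality of growth vectors: different stratifications sharing the same homogeneous dimension exist (for instance, $\mathbb{R}^4$ and the three-dimensional Heisenberg algebra both satisfy $Q = 4$). The additional rigidity needed to split $Q$ into its individual components $n_i$ is precisely what Pansu's asymptotic cone machinery supplies, and I do not see an elementary route that bypasses this deep result, beyond ad hoc arguments in low-dimensional cases analogous to Example \ref{Heisenberg_example}.
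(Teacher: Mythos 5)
Your proposal is correct and follows essentially the same route as the paper: both arguments identify $\mathrm{rank}_{\mathbb{Z}}\left(C_{i}(N)/C_{i+1}(N)\right)$ with $\dim_{\mathbb{R}}(V_{i+1})$ (you via the Malcev correspondence, the paper via an inductive quotient-by-commutator argument), apply the Bass--Guivarc'h formula for the first claim, and for the second claim reduce to a quasi-isometry of lattices and use the quasi-isometric invariance of the ranks of the lower central series quotients. The only substantive difference is that you explicitly name Pansu's rigidity theorem as the source of that invariance --- and rightly note that growth type alone cannot separate the layers --- whereas the paper asserts the invariance without attribution.
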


\begin{proof}
We will build a correspondence between the lower central series of $N$ and the stratification on the Lie algebra $\mathfrak{g} = V_1 \oplus \dots \oplus V_s$. Consider the commutator subgroup $[G,G] \subset G$. Then \cite[Theorem 3.50]{Warner} implies that $[G,G]$ is a Lie subgroup of $G$ whose corresponding Lie algebra is isomorphic to $[\mathfrak{g},\mathfrak{g}] = V_2 \oplus \dots \oplus V_s$. Denote the projection onto the quotient by $\pi:G \to G/[G,G]$. \par 
It is straightforward to check that $G/[G,G]$ is isomorphic as a Lie group to Euclidean space and $\pi(N)$ is a lattice in $G/[G,G]$. However, lattices in Euclidean spaces are finitely generated abelian groups whose rank is equal to the dimension of the ambient Euclidean space. Hence it follows that $\pi(N)$ is generated by $\textrm{dim}(G/[G,G]) = \textrm{dim}(\mathfrak{g}/[\mathfrak{g},\mathfrak{g}]) = \textrm{dim}(V_1)$ elements. This gives \[\textrm{rank}_{\mathbb{Z}} C_{0}(N)/C_{1}(N) = \textrm{dim}_{\mathbb{R}}(V_1),\]
where $C_{i}(N)$ denotes the $i$'th term in the lower central series of $N$. We can proceed inductively to obtain that 
\begin{equation}
\label{correspondance_rank_dim}
    \textrm{rank}_{\mathbb{Z}} \left(C_{i}(N)/C_{i+1}(N)\right) = \textrm{dim}_{\mathbb{R}}(V_{i+1}), \quad i = 0 , \dots, s - 1.
\end{equation} 
The first statement of the theorem now follows from the Bass - Guivarc'h formula \cite[Theorem 2]{Bass}, stating that the polynomial growth of a finitely generated nilpotent group $N$ is precisely \[\sum_{k = 1}^{n} k \cdot \textrm{rank}_{\mathbb{Z}}\left(C_{k-1}(N)/C_{k}(N)\right).\] \par 
Let $H$ be another stratified Lie group that is realizable over the rationals and pick a lattice $M$ in $H$. A quasi-isometry between the uniform spaces on $G$ and $H$ induce a quasi-isometry between $(N,d_{N})$ and $(M,d_{M})$, where $d_{N}$ and $d_{M}$ are any proper, left-invariant metrics. Since the rank of the the quotients in the lower central series of a finitely generated nilpotent group are quasi-isometric invariants, we have that  \[\textrm{rank}_{\mathbb{Z}} \left(C_{i}(N)/C_{i+1}(N)\right) = \textrm{rank}_{\mathbb{Z}} \left(C_{i}(M)/C_{i+1}(M)\right).\] The correspondence (\ref{correspondance_rank_dim}) gives that the growth vectors $\mathfrak{G}(G)$ and $\mathfrak{G}(H)$ are the same. 
\end{proof}

\begin{example}
\label{Engel_group}
Let $\mathfrak{g}$ be the nilpotent Lie algebra spanned by the elements $X_1, X_2, X_3 , X_4$ with non-trivial bracket relations \[[X_1,X_2] = X_3, \quad [X_1,X_3] = X_4.\] We call $\mathfrak{g}$ the \textit{Engel algebra} and it is has a stratification given by \[\mathfrak{g} = \textrm{span}_{\mathbb{R}}\{X_1,X_2\} \oplus \textrm{span}_{\mathbb{R}}\{X_3\} \oplus \textrm{span}_{\mathbb{R}}\{X_4\}.\] The connected and simply connected Lie group $G$ corresponding to $\mathfrak{g}$ is called the \textit{Engel group} and appears for instance in \cite{Le_Donne_2}. Since $G$ is diffeomorphic to $\mathbb{R}^4$ through the exponential map, we can consider the two coverings $\mathcal{U}(G)$ and $\mathcal{U}\left(\mathbb{R}^4\right)$ on $\mathbb{R}^4$. \par 
To check that two coverings are not equivalent is not a complete triviality from a computational perspective. However, their uniform metric spaces are not quasi-isometric by Theorem \ref{growth_vector_result} since their growth vectors are different. Hence the coverings they induce on $\mathbb{R}^4$ are non-equivalent by Proposition \ref{generalization_of_equivalent_coverings}. This illustrates the novelty of the large scale approach, even when the coverings are on the same space. 
\end{example}

\subsection{More Examples}
\label{sec: More Examples}

\subsubsection{Solvable Groups}

We will now consider the more general class of solvable Lie groups and we begin by recalling the definition of an (abstract) solvable group. The \textit{derived series} of a group $N$ is defined by \[N^{(0)} := N, \quad N^{(i)} := [N^{(i-1)},N^{(i-1)}],\] for $i \geq 1$.  A group $N$ is said to be \textit{solvable} if its derived series eventually reaches the trivial group. Every nilpotent group is solvable, although the converse is false. A group $N$ is called \textit{virtually solvable} if it contains a solvable subgroup of finite index. \par 
To see that virtually solvable groups play a prominent role in the setting of uniform metric spaces on Lie groups, consider a connected Lie subgroup $G$ of $GL(n,\mathbb{R})$ for $n \geq 1$. Assume that $d$ is a proper, left-invariant metric on $G$ and that $N$ is a finitely generated subgroup of $G$ such that $d$ restricts to a locally finite metric on $N$. Then Theorem \ref{uniform_structure} shows that $(G,d_\mathcal{U}) \simeq (N,d_N)$, where $d_N$ is any proper, left-invariant metric on $N$. Since $N$ is a finitely generated subgroup of $GL(n,\mathbb{R})$ we can apply the famous Tits Alternative \cite[Theorem 4.4.7]{Geometric_Group_Theory} in group theory to conclude that $N$ is either virtually solvable or has a free subgroup of rank two as a finite index subgroup. Motivated by this, we examine the uniform metric spaces on solvable Lie groups more closely.

\begin{definition}
A \textit{solvable} Lie group is a connected Lie group such that its Lie algebra $\mathfrak{g}$ satisfies $\mathfrak{g}^n = \{0\}$ for some $n \in \mathbb{N}_{0}$, where \[\mathfrak{g}^{0} := \mathfrak{g}, \qquad \mathfrak{g}^i := [\mathfrak{g}^{i-1},\mathfrak{g}^{i-1}], \quad i \geq 1.\]
\end{definition}
An example of a solvable Lie group is all upper-triangular $n \times n$ matrices with positive determinant. As we will be interested in lattices in solvable Lie groups so that we can apply Theorem \ref{uniform_structure}, let us remark that the existence of lattices in solvable Lie groups are more complicated that in the nilpotent case. Unlike a stratified Lie group, a solvable Lie group does not need to be unimodular, that is, the right and left Haar measures might be different. There are no lattices in a non-unimodular locally compact group by \cite[Remark 1.9]{Discrete_Subgroups}. In particular, the \textit{affine group} (also known as the $Ax+b$ group) given by \[\textrm{Aff} := \left\{\begin{pmatrix} a & b \\ 0 & 1\end{pmatrix}\Big| \, a> 0, \, b \in \mathbb{R}\right\}\]does not admit lattices even though it is solvable. We will relate the uniform metric spaces on solvable Lie groups admitting lattices to the following subclass of finitely generated solvable groups.

\begin{definition}
A group $\Gamma$ is \textit{polycyclic} if it admits a chain of subgroups \[\Gamma = \Gamma_0 \supseteq \Gamma_1 \supseteq \dots \supseteq\Gamma_{k} = \{e \},\] where each term in the chain is a normal subgroup of the previous term and the quotients $\Gamma_{i-1}/\Gamma_i$ are cyclic groups for $i = 1, \dots, k$. It is called \textit{strongly polycyclic} if it admits such a chain where each quotient $\Gamma_{i-1}/\Gamma_i$ is infinitely cyclic. 
\end{definition}

\begin{proposition}
\label{solvable_group_result}
Let $G$ be a connected and simply connected solvable Lie group and assume there exists a lattice $\Gamma$ in $G$. Then the uniform metric space $(G,d_{\mathcal{U}})$ is quasi-isometric to $(\Gamma,d)$ where $d$ is any proper, left-invariant metric on $\Gamma$. Moreover, $\Gamma$ is strongly polycyclic.
\end{proposition}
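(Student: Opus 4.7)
The plan is to reduce the first statement to a direct application of Theorem \ref{uniform_structure} and to obtain the second (strongly polycyclic) statement from the classical structure theory of lattices in simply connected solvable Lie groups, as developed by Mostow and collected in \cite{Discrete_Subgroups}. The main obstacle is verifying the hypotheses of Theorem \ref{uniform_structure}, and in particular checking that $\Gamma$ is a net in $G$; this turns on the fact that lattices in simply connected solvable Lie groups are automatically uniform, a non-trivial fact that is specific to the solvable setting and that has no analogue for general locally compact groups.

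First I would fix a left-invariant Riemannian metric on $G$, noting that such a metric exists since $G$ is a connected Lie group. The induced distance function $d_G$ is then a proper, left-invariant metric on $G$ compatible with the manifold topology, and closed balls in $(G,d_G)$ are compact by the Hopf--Rinow theorem applied to the left-invariant Riemannian structure. Restricting $d_G$ to $\Gamma$ gives a locally finite metric, because the intersection of any closed $d_G$-ball with the discrete subset $\Gamma$ is finite. This takes care of two out of the three hypotheses of Theorem \ref{uniform_structure}.

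The key step is to show that $\Gamma$ is a net in $(G,d_G)$. This amounts to showing that $\Gamma$ is a uniform (cocompact) lattice, that is, that $G/\Gamma$ is compact. For simply connected solvable Lie groups this is automatic: by Mostow's theorem \cite[Theorem 3.1]{Discrete_Subgroups} every lattice in a connected, simply connected solvable Lie group is uniform. Once $G/\Gamma$ is known to be compact, one can write $G = \Gamma C$ for some compact set $C \subset G$, and then $\Gamma$ is a net since $\sup_{g \in G} \inf_{\gamma \in \Gamma} d_G(g,\gamma) \leq \mathrm{diam}(C) < \infty$. Additionally, uniform lattices in connected Lie groups are finitely generated \cite[Remark 1.13]{Discrete_Subgroups}, so $\Gamma$ is a finitely generated subgroup of $G$. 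All hypotheses of Theorem \ref{uniform_structure} are therefore met, and we conclude that $(G,d_{\mathcal{U}})$ is quasi-isometric to $(\Gamma,d)$ for any proper, left-invariant metric $d$ on $\Gamma$.

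For the final assertion, that $\Gamma$ is strongly polycyclic, I would simply invoke the classical result of Mostow \cite[Corollary 3.8]{Discrete_Subgroups} stating that every lattice in a simply connected solvable Lie group is strongly polycyclic; this is the precise analogue in the solvable setting of the fact (used in the proof of Proposition \ref{Carnot_group_result}) that lattices in simply connected nilpotent Lie groups are finitely generated and torsion-free nilpotent. I expect the bulk of the write-up to consist of carefully citing these structural facts and assembling them into the verification of the hypotheses of Theorem \ref{uniform_structure}, rather than any substantial new calculation.
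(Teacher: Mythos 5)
Your proposal is correct and follows essentially the same route as the paper: fix a left-invariant Riemannian metric on $G$, use Mostow's theorem (\cite[Theorem 3.1]{Discrete_Subgroups}) that lattices in simply connected solvable Lie groups are uniform to get the net property, verify the remaining hypotheses of Theorem \ref{uniform_structure}, and quote the structure theory for strong polycyclicity. The only (immaterial) difference is that the paper obtains finite generation of $\Gamma$ as a consequence of strong polycyclicity via \cite[Proposition 3.7]{Discrete_Subgroups} rather than citing it separately.
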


\begin{proof}
Any lattice in a solvable Lie group is uniform by \cite[Theorem 3.1]{Discrete_Subgroups}. It follows from \cite[Proposition 3.7]{Discrete_Subgroups} that any lattice in a simply connected solvable Lie group is strongly polycyclic and hence finitely generated. By fixing a Riemannian metric $g$ on $G$ by left translating an inner product on the Lie algebra, it is clear that all the conditions in Theorem \ref{uniform_structure} are satisfied and the result follows. 
\end{proof}

\subsubsection{The Special Linear Group and the Hyperbolic Plane}
\label{sec: The Special Linear Group and the Hyperbolic Plane}

We will illustrate a uniform metric space that has fundamentally different properties than those built on solvable Lie groups. Consider the Lie group $SL(2,\mathbb{R})$ of $2 \times 2$ matrices with real coefficients and unit determinant. It is related to the hyperbolic plane $\mathbb{H}^2$ with the usual hyperbolic distance by the fact that $SL(2,\mathbb{R})$ acts on $\mathbb{H}^2$ by M\"{o}bius transformations \[\begin{pmatrix} a & b \\ c & d\end{pmatrix} \cdot z := \frac{az + b}{cz + d}, \qquad A = \begin{pmatrix} a & b \\ c & d\end{pmatrix} \in SL(2,\mathbb{R}), \, z \in \mathbb{H}^2.\] Notice that both $A$ and $-A$ induce the same transformation. \par 
An action of a discrete group $G$ on a topological space $X$ is said to be \textit{properly discontinuous} if every point $x \in X$ has a neighbourhood $U$ such that $g \cdot U \cap U = \emptyset$ for every non-identity element $g \in G$. Finally, recall that a group action is said to be free if $g \cdot x = x$ for some $x \in X$ and $g \in G$ implies that $g$ is the identity element of the group $G$. 

\begin{theorem}
\label{SL}
The uniform metric space $(SL(2,\mathbb{R}),d_\mathcal{U})$ is quasi-isometric to the fundamental group of any compact Riemann surface of genus $g \geq 2$. Moreover, this is again quasi-isometric to the hyperbolic space $\mathbb{H}^2$ with its usual hyperbolic distance. In particular, the uniform metric space $(SL(2,\mathbb{R}),d_\mathcal{U})$ is quasi-hyperbolic.
\end{theorem}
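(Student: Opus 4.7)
My plan is to produce a cocompact lattice in $SL(2,\mathbb{R})$ arising from a hyperbolic Riemann surface, reduce the large scale geometry of $(SL(2,\mathbb{R}),d_{\mathcal{U}})$ to that of the lattice via Theorem \ref{uniform_structure}, and then invoke the Milnor--\v{S}varc lemma to identify the lattice with the hyperbolic plane. Quasi-hyperbolicity then drops out for free.

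First, I would produce the lattice. By uniformization, any compact Riemann surface $\Sigma$ of genus $g \geq 2$ is realised as $\mathbb{H}^{2}/\Gamma$ with $\Gamma \subset PSL(2,\mathbb{R})$ a torsion-free cocompact Fuchsian group isomorphic to $\pi_{1}(\Sigma)$. Pulling $\Gamma$ back along the double cover $SL(2,\mathbb{R}) \to PSL(2,\mathbb{R})$ yields a discrete cocompact subgroup $\tilde{\Gamma} \subset SL(2,\mathbb{R})$ sitting in a central extension
\[
1 \longrightarrow \{\pm I\} \longrightarrow \tilde{\Gamma} \longrightarrow \Gamma \longrightarrow 1.
\]
Since the kernel $\{\pm I\}$ is finite, the quotient map $\tilde{\Gamma} \to \Gamma$ is a quasi-isometry with respect to any proper, left-invariant metrics on the two sides.

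Next, I would apply Theorem \ref{uniform_structure}. Fix a left-invariant Riemannian metric on $SL(2,\mathbb{R})$; the associated distance $d$ is proper, left-invariant, and compatible with the topology. The subgroup $\tilde{\Gamma}$ is finitely generated, being a cocompact lattice in a connected Lie group, and it is a net in $(SL(2,\mathbb{R}),d)$ by cocompactness. Moreover, discreteness of $\tilde{\Gamma}$ in a Lie group with a proper metric guarantees that $d$ restricts to a locally finite metric on $\tilde{\Gamma}$. Theorem \ref{uniform_structure} therefore yields
\[
(SL(2,\mathbb{R}),d_{\mathcal{U}}) \simeq (\tilde{\Gamma}, d_{\tilde{\Gamma}}) \simeq (\Gamma, d_{\Gamma}),
\]
where $d_{\tilde{\Gamma}}$ and $d_{\Gamma}$ are any word metrics on the respective groups.

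Finally, I would invoke the Milnor--\v{S}varc lemma: the group $\Gamma$ acts on the proper geodesic metric space $\mathbb{H}^{2}$ by isometries, properly discontinuously, freely (since $\Gamma$ is torsion-free), and cocompactly, so $\Gamma$ with any word metric is quasi-isometric to $\mathbb{H}^{2}$. Chaining the quasi-isometries gives $(SL(2,\mathbb{R}),d_{\mathcal{U}}) \simeq \mathbb{H}^{2}$, and the quasi-hyperbolicity claim follows at once from Lemma \ref{hyperbolicity_lemma}(b) together with the quasi-isometric invariance of quasi-hyperbolicity. The main technical point is the slightly awkward double cover between $SL(2,\mathbb{R})$ and $PSL(2,\mathbb{R})$, and it is resolved by the general observation that dividing out a finite normal subgroup preserves the quasi-isometry type; everything else is a clean application of tools already established in Sections \ref{sec: From Admissible Coverings to the Large Scale Setting} and \ref{Chapter_Uniform_Metric_Spaces}.
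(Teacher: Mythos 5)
Your proposal is correct and follows essentially the same route as the paper: reduce $(SL(2,\mathbb{R}),d_{\mathcal{U}})$ to a cocompact torsion-free discrete subgroup via Theorem \ref{uniform_structure} with a left-invariant Riemannian metric, identify that subgroup with a surface group and then with $\mathbb{H}^{2}$ by Milnor--\v{S}varc, and conclude quasi-hyperbolicity from Lemma \ref{hyperbolicity_lemma}(b). The only divergence is that you pass through the double cover $SL(2,\mathbb{R}) \to PSL(2,\mathbb{R})$ and quotient out the finite center, whereas the paper asserts directly that $\pi_{1}(X)$ is realized as a uniform torsion-free discrete subgroup of $SL(2,\mathbb{R})$; your treatment of this point is, if anything, slightly more careful.
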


\begin{proof}
Fix an inner product $\langle \cdot, \cdot \rangle$ on the Lie algebra $\mathfrak{sl}(2,\mathbb{R})$ of $SL(2,\mathbb{R})$ consisting of $2 \times 2$ matrices with real coefficients and zero trace. Left translate this to obtain a Riemannian metric on $SL(2,\mathbb{R})$ and consider the Carnot-Carath\'{e}odory metric $d_{CC}$ associated to it. Then $(SL(2,\mathbb{R}),d_{CC})$ satisfies all the initial assumptions in Theorem \ref{uniform_structure}.  \par
Let $X$ be a compact Riemann surface of genus $g \geq 2$. The fundamental group $\pi_{1}(X)$ of $X$ can be realized as a uniform and torsion free discrete subgroup $\Gamma$ of $SL(2,\mathbb{R})$. Conversely, any uniform and torsion free discrete subgroup $\Gamma$ of $SL(2,\mathbb{R})$ acts on $\mathbb{H}^2$ freely and properly discontinuously such that the orbit space $\mathbb{H}^2/\Gamma$ is a compact Riemann surface. These observations are are built up of several standard results about compact Riemannian surfaces and they can all be found in the lecture notes \cite{Fuchsian}. \par 
Fix such a uniform and torsion free discrete subgroup $\Gamma$ of $SL(2,\mathbb{R})$. Then $\Gamma$ acts on $SL(2,\mathbb{R})$ by left translations and it follows from the Milnor-\v{S}varc lemma \cite[Proposition 1.3.13]{Large_Scale} that $\Gamma$ is finitely generated. The fact that $\Gamma$ is uniform implies that it is a net in $(SL(2,\mathbb{R}),d_{CC})$. The discreteness of $\Gamma$ implies that $d_{CC}$ is locally finite on $\Gamma$. We can conclude by Theorem \ref{uniform_structure} that the uniform metric space $(SL(2,\mathbb{R}),d_\mathcal{U})$ is quasi-isometric to $\Gamma$ with any proper, left-invariant metric. The choice of $\Gamma$ does not matter since \cite[Corollary 5.5.9]{Geometric_Group_Theory} implies that any two uniform, discrete subgroups of $SL(2,\mathbb{R})$ are quasi-isometric. The quasi-isometry between the fundamental group $\pi_{1}(X)$ and the hyperbolic plane $\mathbb{H}^2$ is well-known and can be found in \cite[Corollary 5.4.10]{Geometric_Group_Theory}. The final statement follows from Lemma \ref{hyperbolicity_lemma} (b).
\end{proof}

\begin{remark}
In the proof of Theorem \ref{SL} it is tempting to consider the lattice $SL(2,\mathbb{Z})$ in $SL(2,\mathbb{R})$ instead of $\Gamma$. However, $SL(2,\mathbb{Z})$ has a free group of rank two as a finite index subgroup as showed in \cite[Example 4.4.1]{Geometric_Group_Theory}. This implies together with \cite[Theorem 1]{Hyperbolic_Plane_Virtually_Free} that $SL(2,\mathbb{Z})$ is not quasi-isometric to $\mathbb{H}^2$. The reason for this failure lies with the non-compactness of the homogeneous space $SL(2,\mathbb{R})/SL(2,\mathbb{Z})$.
\end{remark}

\begin{proposition}
\label{SL_embeddings}
The uniform metric space $(SL(2,\mathbb{R}),d_\mathcal{U})$ is not quasi-isometric to $(\mathbb{H}_{2n+1},d_\mathcal{U})$ or $(\mathbb{R}^k,d_\mathcal{U})$ for any $k,s \in \mathbb{N}$. In fact, there are no quasi-isometric embeddings \[(\mathbb{R}^k,d_\mathcal{U}) \longrightarrow (SL(2,\mathbb{R}),d_\mathcal{U}), \qquad (\mathbb{H}_{2n+1},d_\mathcal{U}) \longrightarrow (SL(2,\mathbb{R}),d_\mathcal{U}),\] unless $k = 1$. 
\end{proposition}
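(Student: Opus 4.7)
The plan is to exploit the quasi-hyperbolicity of $(SL(2,\mathbb{R}), d_{\mathcal{U}})$ established in Theorem \ref{SL} together with the obstruction supplied by Lemma \ref{hyperbolicity_lemma} (d), which forbids a $\mathbb{Z}^2$ subgroup inside any hyperbolic group.

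First I would identify the source spaces up to quasi-isometry using results already in the paper. By Example \ref{grid_example} and Proposition \ref{grids_are_different}, the uniform metric space $(\mathbb{R}^k, d_\mathcal{U})$ is quasi-isometric to the integer lattice $\mathbb{Z}^k$ with its usual word metric, and by Example \ref{Heisenberg_example}, $(\mathbb{H}_{2n+1}, d_{\mathcal{U}})$ is quasi-isometric to the discrete Heisenberg group $(\mathbb{Z}^{2n+1},*)$ with any proper, left-invariant metric. For $k \geq 2$ the standard inclusion $\mathbb{Z}^2 \hookrightarrow \mathbb{Z}^k$ provides a rank-two free abelian subgroup. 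For any $n \geq 1$, the center of $(\mathbb{Z}^{2n+1},*)$ together with a non-central generator produces such a subgroup as well; concretely, using the coordinates of Example \ref{heisenberg_3_example}, the elements $(1,0,\dots,0)$ and $(0,\dots,0,1)$ commute (the last coordinate is central) and generate a torsion-free abelian group of rank two.

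Next I would combine this with the two parts of Lemma \ref{hyperbolicity_lemma}. By part (d) none of these groups are hyperbolic, hence none of the corresponding uniform metric spaces is quasi-hyperbolic. If there existed a quasi-isometric embedding into $(SL(2,\mathbb{R}),d_\mathcal{U})$, then part (a) of the lemma — applied using that each source space is quasi-geodesic because it is the associated metric space of a covered space (via the graph representation in Subsection on graphs) — would force the source to be quasi-hyperbolic, a contradiction. This rules out all the asserted embeddings simultaneously, except the case $k=1$, which is explicitly excluded.

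Finally, the quasi-isometry claim in the case $k=1$ must be handled separately since $\mathbb{Z}$ is hyperbolic and hence the quasi-hyperbolicity obstruction is silent. I would invoke growth type, which is a quasi-isometric invariant: by Theorem \ref{SL}, $(SL(2,\mathbb{R}),d_{\mathcal{U}})$ is quasi-isometric to a uniform lattice in $\mathbb{H}^2$ and therefore has exponential growth, whereas $(\mathbb{R},d_{\mathcal{U}}) \simeq \mathbb{Z}$ has linear growth. The main subtlety of the argument is not analytic but organisational; once the discrete models have been recognised and the correct $\mathbb{Z}^2$ subgroup located inside the Heisenberg group, the hyperbolicity obstruction closes the argument immediately.
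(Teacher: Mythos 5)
Your argument is correct and, for the main part of the statement, coincides with the paper's own proof: both reduce to the hyperbolicity of $(SL(2,\mathbb{R}),d_{\mathcal{U}})$ from Theorem \ref{SL}, exhibit a $\mathbb{Z}^2$ subgroup in $\mathbb{Z}^k$ ($k\geq 2$) and in the discrete Heisenberg group (your commuting pair $(1,0,\dots,0)$ and $(0,\dots,0,1)$ is exactly the paper's pair $A,B$ written in the coordinates of Example \ref{heisenberg_3_example}), and then apply Lemma \ref{hyperbolicity_lemma} (a), (c) and (d). The only genuine divergence is the residual case $k=1$: the paper rules out a quasi-isometry between $(\mathbb{R},d_{\mathcal{U}})$ and $(SL(2,\mathbb{R}),d_{\mathcal{U}})$ by comparing asymptotic dimensions, $\textrm{asdim}(\mathbb{H}^2)=2\neq 1=\textrm{asdim}(\mathbb{Z})$, whereas you compare growth types (exponential for a cocompact surface group versus linear for $\mathbb{Z}$). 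Both invariants are quasi-isometric invariants and both close the argument; the growth-type route is the one the paper itself deploys later in Proposition \ref{final_result}, though to make it self-contained you should cite that a non-elementary hyperbolic group contains a free subgroup of rank two (hence has exponential growth), and note that ``uniform lattice in $\mathbb{H}^2$'' should more precisely read ``fundamental group of a compact Riemann surface of genus $g\geq 2$, quasi-isometric to $\mathbb{H}^2$.'' The asymptotic-dimension route instead requires Gromov's computation $\textrm{asdim}(\mathbb{H}^2)=2$; neither is harder than the other, and your version has the mild advantage of reusing machinery already needed elsewhere in the paper.
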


\begin{proof}
Consider the elements \[A = \begin{pmatrix} 1 & e_1 & 0 \\ 0 & I_{n\times n} & 0 \\ 0 & 0 & 1 \end{pmatrix}, \, B = \begin{pmatrix} 1 & 0 & 1 \\ 0 & I_{n\times n} & 0 \\ 0 & 0 & 1 \end{pmatrix} \in \mathbb{H}_{2n+1},\] where $e_1 = (1, 0, \dots, 0)$. The subgroup $\langle A, B \rangle$ generated by $A$ and $B$ is commutative and the mapping 
\begin{align*}
\phi: \langle A,B \rangle & \longrightarrow \mathbb{Z}^2 \\
A^{r}B^{s} & \longmapsto (r,s)
\end{align*}
gives an isomorphism between $\langle A,B \rangle$ and $\mathbb{Z}^2$. Hence it follows from Lemma \ref{hyperbolicity_lemma} (d) that the Heisenberg groups are not hyperbolic. We mentioned in Lemma \ref{hyperbolicity_lemma} (c) that $\mathbb{Z}^k$ is not a hyperbolic group unless $k = 1$. Hence neither of the quasi-isometric embeddings $(\mathbb{R}^k,d_\mathcal{U}) \longrightarrow (SL(2,\mathbb{R}),d_\mathcal{U})$ or $(\mathbb{H}_{2n+1},d_\mathcal{U}) \longrightarrow (SL(2,\mathbb{R}),d_\mathcal{U})$ are possible due to Lemma \ref{hyperbolicity_lemma} (a) for $n \in N$ and $k \geq 2$. \par 
For $k = 1$ one obtain several quasi-isometric embeddings $(\mathbb{R},d_\mathcal{U}) \longrightarrow (SL(2,\mathbb{R}),d_\mathcal{U})$ from Lemma \ref{hyperbolicity_lemma} (c). We can not use hyperbolicity to conclude that $(\mathbb{R},d_{\mathcal{\mathcal{U}}})$ is not quasi-isometric to $(SL(2,\mathbb{R}),d_{\mathcal{\mathcal{U}}})$. However, we can consider their asymptotic dimensions together with Theorem \ref{SL} to derive \[\textrm{asdim}(SL(2,\mathbb{R}),d_\mathcal{U}) = \textrm{asdim}(\mathbb{H}^2) = 2 \neq 1 = \textrm{asdim}(\mathbb{Z}) = \textrm{asdim}(\mathbb{R},d_\mathcal{U}).\]
Here we have used that the asymptotic dimension of $\mathbb{H}^2$ is equal to two, a result going back to Gromov \cite{Gromov}. Hence the claim follows from the quasi-isometric invariance of asymptotic dimension.
\end{proof}

Notice that we used both asymptotic dimension and hyperbolicity in the proof of Proposition \ref{SL_embeddings}. Arguments such as these are our main motivation for considering invariants from large scale geometry. For another class of examples, we refer the reader interested in shearlet groups to the Ph.D. thesis of Ren\'{e} Koch \cite[Section 5.4]{ThesisRene} where novel results regarding the relationship between admissible groups, dual orbits and quasi-isometries are proved.

\section{Decomposition Spaces and Geometric Embeddings}
\label{Chapter_Decomposition_Spaces}

This section is devoted to introducing embeddings between decomposition spaces that induce quasi-isometric embeddings between the underlying coverings called \textit{geometric embeddings}. In Subsection \ref{sec: Spatially_Implemented_Geometric_Embeddings} we will give some criteria for when quasi-isometries between the underlying coverings can induce geometric embeddings between decomposition spaces.

\subsection{Definitions and Basic Properties}
\label{sec:Definitions_and_Basic_Properties}

We will start by reviewing basic definitions and results regarding decomposition spaces given in \cite{Hans_Grobner}. This is done to make our exposition complete as well as to fix notation and settle our conventions. Throughout this section, we let $X$ denote an arbitrary locally compact topological space and denote by $(\mathcal{A},\|\cdot\|_{\mathcal{A}})$ a subspace of $C_{b}(X,\mathbb{C})$ with a norm $\|\cdot\|_{\mathcal{A}}$ making it into a Banach algebra under pointwise multiplication. Moreover, we additionally stipulate that $(\mathcal{A},\|\cdot\|_{\mathcal{A}})$ is closed under complex conjugation and that it is \textit{regular}, that is, $(\mathcal{A},\|\cdot\|_{\mathcal{A}})$ is sufficiently large to separate points from closed sets by continuous functions. A \textit{partition of unity} $\Phi = (\varphi_{i})_{i \in I}$ on $X$ subordinate to an admissible covering $\mathcal{Q} = (Q_i)_{i \in I}$ is a collection of non-negative continuous functions such that 
\begin{equation}
\label{sum_partition_of_unity}
    \textrm{supp}(\varphi_i) \subset Q_i, \quad \sum_{i \in I}\varphi_{i} \equiv 1.
\end{equation}
Since the covering $\mathcal{Q}$ is assumed to be admissible, there is no convergence issue in the sum (\ref{sum_partition_of_unity}).

\begin{definition}
Let $\mathcal{Q} = (Q_i)_{i \in I}$ be an admissible covering on $X$. A \textit{bounded admissible partition of unity} (BAPU) in $\mathcal{A}$ subordinate to $\mathcal{Q}$ is a partition of unity $\Phi = (\varphi_{i})_{i \in I}$ subordinate to $\mathcal{Q}$ where $\varphi_{i} \in \mathcal{A}$ for every $i \in I$ and
\begin{equation}
\label{BAPU-bound}
    \sup_{i \in I}\|\varphi_i\|_{\mathcal{A}} < \infty.
\end{equation}
It is common to refer to $\Phi$ as a $\mathcal{Q}$-BAPU to emphasize the covering $\mathcal{Q}$ in question. 
\end{definition}

We denote by $\mathcal{A}_0$ the elements of $\mathcal{A}$ that have compact support. When forming the decomposition space $\mathcal{D}(\mathcal{Q},B,Y)$ in Definition \ref{decomposition_spaces_definition}, we need some weak assumptions on the Banach spaces $(B,\|\cdot\|_{B})$ and $(Y,\|\cdot\|_{Y})$ to deduce nice properties of the decomposition spaces $\mathcal{D}(\mathcal{Q},B,Y)$. \par 
Our standing assumptions are that $B$ is continuously embedded into the dual $\mathcal{A}_{0}^*$, contains $\mathcal{A}_0$ as a dense subspace, and is a Banach module over $\mathcal{A}$ under pointwise operations. We assume that $(Y,\|\cdot\|_{Y})$ is a Banach space consisting of sequences on the index set $I$. Moreover, the finitely supported sequences are required to form a dense subspace of $Y$. Define the \textit{clustering map} $\Gamma_{\mathcal{Q}}:Y \longrightarrow Y$ by \[ (a_i)_{i \in I} \longmapsto \left(\sum_{j \in i^{*}}a_j\right)_{i \in I}.\]
It will henceforth be assumed that the clustering map $\Gamma_{\mathcal{Q}}$ is well-defined and bounded on $Y$.
Finally, we impose that $Y$ should be \textit{solid}, meaning that if $x = (x_i)_{i \in I}$ is a sequence in $Y$ and $y = (y_i)_{i \in I}$ is a sequence in $\mathbb{C}^{I}$ such that $|y_i| \leq |x_i|$ for every $i \in I$, then $y \in Y$ with $\|y\|_{Y} \leq \|x\|_{Y}$. We refer the reader to \cite[Section 2]{Hans_Grobner} for a more thorough discussion of these assumptions.

\begin{definition}
\label{decomposition_spaces_definition}
Let $B$ and $Y$ be Banach spaces satisfying the standing assumptions above. Moreover, let $\Phi = (\varphi_i)_{i \in I}$ be a $\mathcal{Q}$-BAPU in $\mathcal{A}$ corresponding to an admissible covering $\mathcal{Q}$ on $X$. The \textit{decomposition (function) space} $\mathcal{D}(\mathcal{Q},B,Y)$ consists of all elements $f \in \mathcal{A}_{0}^{*}$ such that 
\begin{equation}
\label{decomposition_norm}
    \Big\|\left(\|f \cdot \varphi_i\|_{B}\right)_{i \in I}\Big\|_Y < \infty.
\end{equation}
We call $B$ the \textit{local component} and $Y$ the \textit{global component} of the decomposition space $\mathcal{D}(\mathcal{Q},B,Y)$.
\end{definition}

By equipping $\mathcal{D}(\mathcal{Q},B,Y)$ with the norm given by (\ref{decomposition_norm}) we have that $\mathcal{D}(\mathcal{Q},B,Y)$ is a Banach space by \cite[Theorem 2.2 A]{Hans_Grobner}. The observant reader will have noticed that we have excluded the $\mathcal{Q}$-BAPU $\Phi = (\varphi_i)_{i \in I}$ from the notation $\mathcal{D}(\mathcal{Q},B,Y)$. This is because \cite[Theorem 2.3]{Hans_Grobner} implies that different $\mathcal{Q}$-BAPU's give rise to equivalent norms. We summarize some well known properties of decomposition spaces in Proposition \ref{basic_properties} below. The last statement of Proposition \ref{basic_properties} is a straightforward extension of \cite[Corollary 2.6]{Hans_Grobner}.

\begin{proposition}
\label{basic_properties}
The (continuous) dual space of $\mathcal{D}(\mathcal{Q},B,Y)$ can be identified with the decomposition space $\mathcal{D}(\mathcal{Q},B^{*},Y^{*})$. In particular, reflexivity of the local and global components gives reflexivity of the corresponding decomposition space. Moreover, we have the norm convergence \[f = \sum_{i \in I}f\cdot \varphi_{i},\] in $\mathcal{D}(\mathcal{Q},B,Y)$ where $\Phi = (\varphi_i)_{i \in I}$ is any $\mathcal{Q}$-BAPU for $\mathcal{D}(\mathcal{Q},B,Y)$. Finally, a function $f$ belongs to $\mathcal{D}(\mathcal{Q},B,Y)$ if and only if there exist $k \in \mathbb{N}$ and $f_i \in B$ with $\textrm{supp}(f_i) \subset Q_{i}^{k*}$ such that $\{\|f_{i}\|_{B}\}_{i \in I} \in Y$ and $f = \sum_{i \in I}f_i$ in $\mathcal{A}_{0}^*$.
\end{proposition}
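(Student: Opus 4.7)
The plan is to establish the four assertions in sequence, with the duality identification doing most of the heavy lifting and the remaining items following from it or from routine density arguments.

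For the dual identification $\mathcal{D}(\mathcal{Q},B,Y)^{*} \cong \mathcal{D}(\mathcal{Q},B^{*},Y^{*})$, I would use the standard \emph{retract} strategy that goes back to the original Feichtinger--Gröbner paper. Introduce the auxiliary Banach space $Y(B) := \{(b_i)_{i \in I} \in B^{I} : (\|b_i\|_{B})_{i \in I} \in Y\}$ with norm $\|(b_i)_{i}\|_{Y(B)} := \|(\|b_i\|_{B})_i\|_{Y}$. The analysis map $A: \mathcal{D}(\mathcal{Q},B,Y) \to Y(B)$, $A(f) := (f \cdot \varphi_i)_{i \in I}$, is an isometry by definition. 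Next, using regularity of $\mathcal{A}$, pick an auxiliary system $\Psi = (\psi_i)_{i \in I} \subset \mathcal{A}$ with $\psi_i \equiv 1$ on $\mathrm{supp}(\varphi_i)$, $\mathrm{supp}(\psi_i) \subset Q_i^{*}$, and $\sup_i \|\psi_i\|_{\mathcal{A}} < \infty$; then define the synthesis map $S: Y(B) \to \mathcal{D}(\mathcal{Q},B,Y)$ by $S((b_i)_i) := \sum_i b_i$, interpreted in $\mathcal{A}_0^{*}$. Boundedness of $S$ reduces, via the Banach-module action of $\mathcal{A}$ on $B$, to boundedness of $\Gamma_{\mathcal{Q}}$ on $Y$ together with admissibility. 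A termwise calculation using $\sum_j \varphi_j \equiv 1$ and the support condition shows $S \circ A = \mathrm{Id}$, so $\mathcal{D}(\mathcal{Q},B,Y)$ is a $1$-complemented retract of $Y(B)$. For $Y(B)$ itself, the density assumption on $Y$ (finitely supported sequences are dense) and density of $\mathcal{A}_0$ in $B$ yield the duality $Y(B)^{*} \cong Y^{*}(B^{*})$ via the pairing $\langle (b_i), (\beta_i) \rangle = \sum_i \langle b_i, \beta_i \rangle_{B, B^{*}}$. Transferring through the retract $(A,S)$ produces the claimed identification, and one reads off that the pairing on the decomposition spaces is $\langle f, g \rangle = \sum_i \langle f \cdot \varphi_i, g \cdot \psi_i \rangle_{B, B^{*}}$.

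Reflexivity is then immediate: if both $B$ and $Y$ are reflexive, applying the identification twice gives $\mathcal{D}(\mathcal{Q},B,Y)^{**} \cong \mathcal{D}(\mathcal{Q},B^{**},Y^{**}) \cong \mathcal{D}(\mathcal{Q},B,Y)$, and a check of the pairings shows this isomorphism is the canonical evaluation map. For the norm-convergence statement $f = \sum_i f \cdot \varphi_i$, I would use density of finitely supported sequences in $Y$ combined with solidity: given $\varepsilon > 0$, pick a finite $F \subset I$ so that the tail $\|(\|f \cdot \varphi_i\|_B)_{i \notin F}\|_{Y} < \varepsilon$. For any $j \in I$, the product $(f - \sum_{i \in F} f \cdot \varphi_i) \cdot \varphi_j$ equals $\sum_{i \notin F,\ i \in j^{*}} f \cdot \varphi_i \cdot \varphi_j$, so the Banach-module bound, the BAPU bound on $\|\varphi_j\|_{\mathcal{A}}$, admissibility, and solidity together yield $\|f - \sum_{i \in F} f \cdot \varphi_i\|_{\mathcal{D}(\mathcal{Q},B,Y)} \lesssim \varepsilon$.

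For the final characterization, the forward implication follows from the norm convergence just established by taking $f_i := f \cdot \varphi_i$ and $k = 1$. For the converse, suppose $f = \sum_i f_i$ in $\mathcal{A}_0^{*}$ with $\mathrm{supp}(f_i) \subset Q_i^{k*}$ and $(\|f_i\|_B)_i \in Y$. For any $j \in I$, the support condition forces
\begin{equation*}
f \cdot \varphi_j \;=\; \sum_{i \,:\, Q_i^{k*} \cap Q_j \neq \emptyset} f_i \cdot \varphi_j \;=\; \sum_{i \in j^{(k+1)*}} f_i \cdot \varphi_j ,
\end{equation*}
so the module property and the uniform BAPU bound (\ref{BAPU-bound}) give $\|f \cdot \varphi_j\|_{B} \leq C \sum_{i \in j^{(k+1)*}} \|f_i\|_{B} = C\, (\Gamma_{\mathcal{Q}}^{k+1} (\|f_i\|_B)_i)_j$. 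Solidity of $Y$ and $(k+1)$-fold boundedness of $\Gamma_{\mathcal{Q}}$ then yield the required bound $\|(\|f \cdot \varphi_j\|_B)_j\|_Y \lesssim \|(\|f_i\|_B)_i\|_Y$, so $f \in \mathcal{D}(\mathcal{Q},B,Y)$.

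The main technical obstacle is the duality step: producing the synthesis operator with the correct boundedness, checking $S \circ A = \mathrm{Id}$ cleanly (which really is where the standing assumptions on $B$, $\mathcal{A}$, and the solid, clustering-bounded space $Y$ all get used simultaneously), and carefully transferring duality from $Y(B)$ to the retract so that the abstract isomorphism has the concrete form of a decomposition space. The remaining three items reduce to routine support bookkeeping and a single application of the clustering map once the duality/retract framework is in place.
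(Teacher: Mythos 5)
The paper does not actually prove this proposition: it is presented as a summary of results from Feichtinger--Gr\"obner, with only the last item flagged as ``a straightforward extension of [FG85, Corollary 2.6]''. So your proposal is measured against the standard arguments rather than an in-paper proof. Your treatment of the norm convergence $f=\sum_i f\cdot\varphi_i$ (tail estimate via solidity, density of finite sequences, one application of $\Gamma_{\mathcal{Q}}$) and of the support characterization is essentially the standard route and is correct, up to two small points: the identity $\sum_{i\in j^{(k+1)*}}\|f_i\|_B=(\Gamma_{\mathcal{Q}}^{k+1}(\|f_i\|_B)_i)_j$ should be an inequality $\leq$, since the iterated clustering map counts chains with multiplicity; and the forward direction of the last item tacitly uses the continuous embedding $\mathcal{D}(\mathcal{Q},B,Y)\hookrightarrow\mathcal{A}_0^*$ to convert norm convergence into convergence in $\mathcal{A}_0^*$.

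The duality step, however, has a genuine gap. Your synthesis map $S\colon Y(B)\to\mathcal{D}(\mathcal{Q},B,Y)$, $S((b_i)_i)=\sum_i b_i$, is not bounded on $Y(B)$ as you defined it, because $Y(B)$ carries no support conditions. Concretely, take the uniform covering of $\mathbb{R}$, $Y=l^2$, a fixed $g\in B$ with $\mathrm{supp}(g)\subset Q_0$ and $\|g\|_B=1$, and set $b_i=\frac{1}{N}g$ for $N^2$ values of $i$ and $b_i=0$ otherwise. Then $\|(b_i)_i\|_{Y(B)}=1$ while $S((b_i)_i)=Ng$ has decomposition norm at least $cN$, so $S$ is unbounded. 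The reduction of the boundedness of $S$ to the boundedness of $\Gamma_{\mathcal{Q}}$ only works on the subspace $Y(B)_{\mathcal{Q}}:=\{(b_i)\in Y(B)\,:\,\mathrm{supp}(b_i)\subset Q_i^{k*}\}$, which is where the image of the analysis map $A$ actually lives. Replacing $Y(B)$ by $Y(B)_{\mathcal{Q}}$ preserves the retract identity $S\circ A=\mathrm{Id}$, but it breaks the duality transfer you rely on: the dual of the closed subspace $Y(B)_{\mathcal{Q}}$ is the quotient $Y^{*}(B^{*})/Y(B)_{\mathcal{Q}}^{\perp}$, not $Y^{*}(B^{*})$ itself, and identifying the resulting retract concretely with $\mathcal{D}(\mathcal{Q},B^{*},Y^{*})$ --- including showing that every functional is of the form $f\mapsto\sum_i\langle f\cdot\varphi_i,g\cdot\psi_i\rangle$ for some $g$ in that space --- is precisely the nontrivial content of the Feichtinger--Gr\"obner duality theorem (and is where the density of $\mathcal{A}_0$ in $B$ and of the finite sequences in $Y$ enter). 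As written, your argument shows that $\mathcal{D}(\mathcal{Q},B,Y)^{*}$ is a retract of some dual space, but it does not yet identify it with $\mathcal{D}(\mathcal{Q},B^{*},Y^{*})$; the reflexivity claim inherits this gap.
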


\begin{remark}
The requirement that $\mathcal{A}_{0}$ is dense in $B$ is only needed for the duality statement in Proposition \ref{basic_properties}, while the requirement that the finite sequences are dense in $Y$ is required for both the duality statement and the norm convergence statement in Proposition \ref{basic_properties}. The reader interested in cases where these requirements does not hold, such as $Y = l^{\infty}(I)$, can safely use all subsequent results that does not invoke these properties. 
\end{remark}

Many of the decomposition spaces appearing in the literature such as modulation spaces and Besov spaces are built on open subsets of some Euclidean space. However, they are not precisely decomposition spaces as defined in \cite{Hans_Grobner}, but rather a variation that incorporates the Fourier transform. We briefly outline this distinction and refer the reader to the paper \cite{Felix_main} for more details. \par
Let $\mathcal{Q} = (Q_i)_{i \in I}$ be an admissible covering for the open set $\emptyset \neq \mathcal{O} \subset \mathbb{R}^k$ with a $\mathcal{Q}$-BAPU $\Phi = (\varphi_i)_{i \in I}$. Moreover, let $B$ and $Y$ be Banach spaces satisfying the standing assumptions where $\mathcal{A} = \mathcal{F}L^1$ is the Fourier transform of all integrable functions. Then the decomposition space $\mathcal{D}(\mathcal{Q},\mathcal{F}L^p,Y)$ consists of all elements $f \in \mathcal{A}_{0}^{*}$ such that 
\begin{equation}
\label{Fourier_decomposition_norm}
    \Big\|\left(\Big\|f \cdot \varphi_i\Big\|_{\mathcal{F}L^{p}}\right)_{i \in I}\Big\|_Y = \Big\|\left(\Big\|\mathcal{F}^{-1}\left(f \cdot \varphi_i\right)\Big\|_{L^p}\right)_{i \in I}\Big\|_Y < \infty.
\end{equation}
The local component $\mathcal{F}L^{p}$ is a Banach module under pointwise multiplication over $\mathcal{A}$ since \[\|f \cdot a\|_{\mathcal{F}L^{p}} = \|\mathcal{F}^{-1}\left(f \cdot a\right)\|_{L^p} = \|\mathcal{F}^{-1}(f) \, * \, \mathcal{F}^{-1}(a)\|_{L^p} \leq \|f
\|_{\mathcal{F}L^p} \cdot \|a\|_{\mathcal{A}},\] for $a \in \mathcal{A}$ and $f \in \mathcal{F}L^{p}$. The expression (\ref{Fourier_decomposition_norm}) is well-defined by the uniform bound (\ref{BAPU-bound}).
\begin{definition}
The $\mathcal{F}$\textit{-type decomposition space} $\mathcal{D}^{{\mathcal{F}}}(\mathcal{Q},L^p,Y)$ is defined by \[\mathcal{D}^{{\mathcal{F}}}(\mathcal{Q},L^p,Y) := \mathcal{F}^{-1}\left(\mathcal{D}(\mathcal{Q},\mathcal{F}L^p,Y)\right).\] Hence $f\in \mathcal{D}^{{\mathcal{F}}}(\mathcal{Q},L^p,Y)$ if and only if $\mathcal{F}(f) \in \mathcal{A}_{0}^{*}$ and  \[\Big\|\left(\Big\|\mathcal{F}^{-1}\left(\mathcal{F}(f) \cdot \varphi_i\right)\Big\|_{L^{p}}\right)_{i \in I}\Big\|_Y < \infty. \]

\end{definition}
If we want to indicate that a decomposition space in a statement can be either a $\mathcal{F}$-type decomposition space or a standard decomposition space, we refer to it as a ($\mathcal{F}$-\textit{type}) \textit{decomposition space}.

\begin{remark}
One avenue that we have not pursued is to consider the quasi-Banach setting, that is, where the local component $(B,\|\cdot\|_{B})$ and the global component $(Y,\|\cdot\|_{Y})$ of ($\mathcal{F}$-type) decomposition spaces are quasi-Banach spaces. The most common examples are $B = L^p$ and $Y = l^q$ for $0 < p,q < 1$. Although these have received increased interest in the last few years, we will avoid this more technical case since the underlying geometry of the coverings are not affected by this extension. We refer the interested reader to \cite{Felix_main} for the most comprehensive exposition on decomposition spaces with quasi-Banach spaces as local and global components.
\end{remark}

\subsection{Geometric Embeddings}
\label{sec: Geometric Embeddings}

We now take up the question of whether one ($\mathcal{F}$-type) decomposition space embeds nicely into another ($\mathcal{F}$-type) decomposition space. As the ($\mathcal{F}$-type) decomposition spaces are Banach spaces, they can embed into each other as \textit{Banach spaces} without this actually reflecting the underlying geometry of the coverings. Moreover, the embedding may then be artificial and not readily available. Hence we will consider a refined notion of embeddings between ($\mathcal{F}$-type) decomposition spaces that incorporates the underlying coverings. \par

Recall that an \textit{embedding} between Banach spaces $(B_1,\|\cdot\|_{B_1})$ and $(B_2,\|\cdot\|_{B_2})$ is an injective linear map $F:B_1 \to B_2$ such that $\|F(f)\|_{B_2} \leq A\|f\|_{B_1}$ for some constant $A > 0$ not depending on $f \in B_1$. Let $(X,\mathcal{Q})$ be a covered space and consider a decomposition space $\mathcal{D}(\mathcal{Q},B,Y)$. We define the \textit{adapted support} of an element $f \in \mathcal{D}(\mathcal{Q},B,Y)$ with respect to the $\mathcal{Q}$-BAPU $\Phi = (\varphi_i)_{i \in I}$ to be \[\mathcal{C}[f] := \bigcup_{i \in I} \left\{Q_i \, \, \Big | \, \, \|f \cdot \varphi_{i}\|_{B} \neq 0 \right\}.\] Notice that $f_i := \sum_{j \in i^{*}}\varphi_{i} \in \mathcal{D}(\mathcal{Q},B,Y)$ is a non-zero function satisfying $\mathcal{C}[f_i] \subset Q_{i}^{2*}$.  If we are considering $\mathcal{F}$-type decomposition spaces, then the \textit{adapted support} of $f \in \mathcal{D}^{\mathcal{F}}(\mathcal{Q},L^{p},Y)$ with respect to the $\mathcal{Q}$-BAPU $\Phi = (\varphi_i)_{i \in I}$ is defined to be \[\mathcal{C}[f] := \bigcup_{i \in I} \left\{Q_i \, \, \Big | \, \, \|\mathcal{F}(f) \cdot \varphi_{i}\|_{\mathcal{F}L^{p}} \neq 0 \right\}.\]
Notice that the adapted support might depend on the choice of $\mathcal{Q}$-BAPU. However, it will be clear when we use the adapted support in Definition \ref{geometric_embedding_definition} below that the choice of $\mathcal{Q}$-BAPU is irrelevant.

\begin{definition}
\label{geometric_embedding_definition}
Let $\mathcal{D}(\mathcal{Q},B_1,Y_1)$ and $\mathcal{D}(\mathcal{P},B_2,Y_2)$ be ($\mathcal{F}$-type) decomposition spaces with underlying covered spaces $(X,\mathcal{Q})$ and $(Z,\mathcal{P})$. We say that a map $F:\mathcal{D}(\mathcal{Q},B_1,Y_1) \to \mathcal{D}(\mathcal{P},B_2,Y_2)$ is a \textit{geometric embedding} of decomposition spaces if it is an embedding of Banach spaces with the following additional requirement: There should exists constants $L,C > 0$ such that for any $k \in \mathbb{N}_{0}$ and any $f,g \in \mathcal{D}(\mathcal{Q},B_1,Y_1)$ with $\mathcal{C}[f] \subset Q_{i}^{k*}$ and $\mathcal{C}[g] \subset Q_{j}^{k*}$, we have
\begin{equation}
\label{geometric_condition}
 \frac{1}{L}d_{\mathcal{Q}}(x,y) - C \leq d_{\mathcal{P}}\left(z,w\right) \leq Ld_{\mathcal{Q}}(x,y) + C,
\end{equation}
where $x \in Q_{i}^{k*}$, $y \in Q_{j}^{k*}$, $z \in \mathcal{C}[F(f)]$ and $w \in \mathcal{C}[F(g)]$ are arbitrary. Two decomposition spaces $\mathcal{D}(\mathcal{Q},B_1,Y_1)$ and $\mathcal{D}(\mathcal{P},B_2,Y_2)$ are said to be \textit{geometrically isomorphic} if there exists an invertible geometric embedding from $\mathcal{D}(\mathcal{Q},B_1,Y_1)$ to $\mathcal{D}(\mathcal{P},B_2,Y_2)$ whose inverse is also a geometric embedding. 
\end{definition}
Although it would seem more convenient to require \eqref{geometric_condition} only for $f = \chi_{Q_i}$ and $g = \chi_{Q_j}$, this is often not sufficient for the simple reason that $\chi_{Q_i}$ might not be in $\mathcal{D}(\mathcal{Q},B_1,Y_1)$. An example where this happens is the modulation space $M^{1}(\mathbb{R}^n)$ defined in Subsection \ref{sec: Euclidean_Modulation_Spaces} since every element in $M^{1}(\mathbb{R}^n)$ is continuous. Moreover, we will give an example at the end of Subsection \ref{sec: David} showing that two decomposition spaces $\mathcal{D}(\mathcal{Q},B_1,Y_1)$ and $\mathcal{D}(\mathcal{P}, B_2, Y_2)$ can be equal as Banach spaces without being geometrically isomorphic. \par
To see why the definition of geometric embeddings encodes the geometry of the decomposition space, we consider the case where $X = Z$. Assume that the identity mapping \[\mathcal{D}(\mathcal{Q},B_1,Y_1) \ni f \longmapsto f \in \mathcal{D}(\mathcal{P},B_2,Y_2)\] is a geometric isomorphism. Then the identity map from $(X,d_{\mathcal{Q}})$ to $(X,d_{\mathcal{P}})$ is a quasi-isometry by (\ref{geometric_condition}). Hence it follows from Proposition \ref{generalization_of_equivalent_coverings} that the coverings $\mathcal{Q}$ and $\mathcal{P}$ are equivalent. Conversely, if the identity map $f:(X,d_{\mathcal{Q}}) \to (X,d_{\mathcal{P}})$ is a quasi-isometry, then the identity map acting on functions $f:X \to \mathbb{C}$ satisfies the estimate (\ref{geometric_condition}). However, it is not guaranteed that the identity $f \mapsto f$ maps $\mathcal{D}(\mathcal{Q},B_1,Y_1)$ continuously into $\mathcal{D}(\mathcal{P},B_2,Y_2)$. 

\begin{proposition}
\label{covering_invariance}
Let $\mathcal{D}(\mathcal{Q},B_1,Y_1)$ and $\mathcal{D}(\mathcal{P},B_2,Y_2)$ be ($
\mathcal{F}$-type) decomposition spaces with underlying covered spaces $(X,\mathcal{Q})$ and $(Z,\mathcal{P})$. If $F:\mathcal{D}(\mathcal{Q},B_1,Y_1) \to \mathcal{D}(\mathcal{P},B_2,Y_2)$ is a geometric embedding, then $F$ induces a quasi-isometric embedding between the metric spaces $(X,d_{\mathcal{Q}})$ and $(Z,d_\mathcal{P})$. In particular, the decomposition spaces $\mathcal{D}(\mathcal{Q},B_1,Y_1)$ and $\mathcal{D}(\mathcal{P},B_2,Y_2)$ can be geometrically isomorphic only when the associated metric spaces $(X,d_{\mathcal{Q}})$ and $(Z,d_{\mathcal{P}})$ are quasi-isometric.
\end{proposition}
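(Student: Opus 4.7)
The plan is to build a map $\phi : X \to Z$ directly from a family of localized test functions in $\mathcal{D}(\mathcal{Q},B_1,Y_1)$ and to read off the quasi-isometric embedding inequality from \eqref{geometric_condition}. For each $i \in I$ I take the test function
\[ f_i := \sum_{j \in i^*} \varphi_j \]
(or its inverse Fourier transform in the $\mathcal{F}$-type setting), where $\Phi = (\varphi_j)_{j \in I}$ is a fixed $\mathcal{Q}$-BAPU. As remarked just before Definition \ref{geometric_embedding_definition}, this lies in $\mathcal{D}(\mathcal{Q},B_1,Y_1)$, is non-zero, and satisfies $\mathcal{C}[f_i] \subset Q_i^{2*}$. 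Since $F$ is injective and $f_i \neq 0$, the adapted support $\mathcal{C}[F(f_i)] \subset Z$ is non-empty, so I can fix a point $z_i \in \mathcal{C}[F(f_i)]$ for each $i \in I$.

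Using that $\mathcal{Q}$ covers $X$, I next pick a selection function $\sigma : X \to I$ with $x \in Q_{\sigma(x)}$ for every $x \in X$ and define $\phi(x) := z_{\sigma(x)}$. For arbitrary $x, y \in X$ I apply \eqref{geometric_condition} with $k = 2$, $f = f_{\sigma(x)}$ and $g = f_{\sigma(y)}$: the hypotheses $\mathcal{C}[f_{\sigma(x)}] \subset Q_{\sigma(x)}^{2*}$ and $\mathcal{C}[f_{\sigma(y)}] \subset Q_{\sigma(y)}^{2*}$ hold by construction, while $x \in Q_{\sigma(x)} \subset Q_{\sigma(x)}^{2*}$, $y \in Q_{\sigma(y)}^{2*}$, $\phi(x) \in \mathcal{C}[F(f_{\sigma(x)})]$ and $\phi(y) \in \mathcal{C}[F(f_{\sigma(y)})]$ are immediate. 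Condition \eqref{geometric_condition} then reads
\[ \frac{1}{L}\, d_{\mathcal{Q}}(x,y) - C \leq d_{\mathcal{P}}(\phi(x),\phi(y)) \leq L\, d_{\mathcal{Q}}(x,y) + C, \]
which is precisely the quasi-isometric embedding condition with parameters $L, C$.

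For the ``in particular'' assertion, when $F$ is a geometric isomorphism the same construction applied to $F^{-1}$ produces a quasi-isometric embedding $\psi : Z \to X$ from test functions $g_j := \sum_{l \in j^*} \psi_l$ on $(Z, \mathcal{P})$. By the criterion recalled after Example \ref{fin_gen_groups}, it then suffices to show that $\psi \circ \phi$ and $\phi \circ \psi$ are close to the respective identity maps. Taking $f = g = f_{\sigma(x)}$ and $x = y$ in \eqref{geometric_condition} bounds the $d_{\mathcal{P}}$-diameter of $\mathcal{C}[F(f_{\sigma(x)})]$ by $C$, so this set together with $\phi(x)$ lies in a common $\mathcal{P}$-neighborhood $P_l^{K*}$ for a universal $K$ depending only on the parameters. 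Applying the geometric condition for $F^{-1}$ to the pair $(g_j, F(f_{\sigma(x)}))$ with $j$ chosen so that $\phi(x) \in P_j$ then bounds $d_{\mathcal{Q}}(\psi(\phi(x)), \mathcal{C}[f_{\sigma(x)}])$ uniformly; since $\mathcal{C}[f_{\sigma(x)}] \subset Q_{\sigma(x)}^{2*}$ contains a point within $d_{\mathcal{Q}}$-distance at most $2$ of $x$, the triangle inequality yields $d_{\mathcal{Q}}(\psi(\phi(x)), x) = O(1)$, and the symmetric argument handles $\phi \circ \psi$. The main obstacle is not any single computation but the bookkeeping in this last step: one must verify that $K$ can be chosen uniformly in $i$ so that the geometric condition for $F^{-1}$ genuinely applies to $(g_j, F(f_{\sigma(x)}))$ for a fixed $K$. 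The main claim itself is essentially a direct unpacking of \eqref{geometric_condition} once the right test functions are in hand.
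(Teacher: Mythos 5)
Your proof of the main claim is correct and follows essentially the same route as the paper: the paper also selects, for each $x \in Q_i$, a non-zero $f$ with $\mathcal{C}[f] \subset Q_i^{k*}$, picks a point of $\mathcal{C}[F(f)]$ (non-empty by injectivity of $F$), and reads the quasi-isometric embedding inequality off \eqref{geometric_condition}; your choice of the explicit test functions $f_i = \sum_{j\in i^*}\varphi_j$ just makes the selection concrete. Where you genuinely diverge is the \enquote{in particular} clause. The paper treats it as immediate from the first claim, whereas you correctly observe that quasi-isometric embeddings in both directions do not by themselves force the spaces to be quasi-isometric, and you supply the missing argument: taking $f=g$ and $x=y$ in \eqref{geometric_condition} bounds $\textrm{diam}_{\mathcal{P}}\,\mathcal{C}[F(f_{\sigma(x)})]$ by $C$, so this set sits inside a single $P_l^{K*}$ with $K$ depending only on $C$, and applying the condition for $F^{-1}$ to the pair $\left(g_j, F(f_{\sigma(x)})\right)$ with $z=w=\phi(x)$ pins $\psi(\phi(x))$ within uniformly bounded $d_{\mathcal{Q}}$-distance of $\mathcal{C}[f_{\sigma(x)}] \subset Q_{\sigma(x)}^{2*}$, hence of $x$. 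Together with the criterion from \cite[Proposition 5.1.10]{Geometric_Group_Theory} this yields the quasi-isometry. The uniformity worry you flag is harmless, since $K$ depends only on the parameters of $F$ and one may pass to the common index $k=\max(2,K)$ in \eqref{geometric_condition}. In short: same mechanism for the main statement, and a more careful (and, strictly speaking, needed) treatment of the isomorphism case than the paper provides.
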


\begin{proof}
Assume that $F:\mathcal{D}(\mathcal{Q},B_1,Y_1) \to \mathcal{D}(\mathcal{P},B_2,Y_2)$ is a geometric embedding. We define a map $\eta:(X,d_{\mathcal{Q}}) \to (Z,d_{\mathcal{P}})$ as follows:  For $x \in X$ we have $x \in Q_i$ for some $i \in I$. Choose a non-zero function $f \in \mathcal{D}(\mathcal{Q},B_1,Y_1)$ with $\mathcal{C}[f] \subset Q_{i}^{k*}$ for some $k \in \mathbb{N}_{0}$. Since $F$ is injective there exists an element $y \in \mathcal{C}\left[F(f)\right]$. Define $\eta(x) = y$. The estimate (\ref{geometric_condition}) gives that $\eta$ is a quasi-isometric embedding.
\end{proof}

We can now use results we have developed for covered spaces to deduce obstructions about geometric embeddings between ($\mathcal{F}$-type) decomposition spaces. Whenever we consider the uniform covering $\mathcal{U}(G)$ on a path-connected, locally compact group $G$, we use the simplified notation \[\mathcal{D}(G,B,Y) := \mathcal{D}(\mathcal{U}(G),B,Y), \qquad \mathcal{D}^{\mathcal{F}}(G,B,Y) := \mathcal{D}^{\mathcal{F}}(\mathcal{U}(G),B,Y).\]

\begin{proposition}
\label{restrictions_on_embeddings}
There are no geometric embeddings
\begin{align*}
    \mathcal{D}(\mathbb{R}^{k},B_1,Y_1) & \longrightarrow  \mathcal{D}(\mathbb{R}^{l},B_2,Y_2), \quad \, \, \, \, \, \, \, \, \, \, l < k, \\
    \mathcal{D}(\mathbb{H}_{2m+1},B_3,Y_3) & \longrightarrow  \mathcal{D}(\mathbb{H}_{2n+1},B_4,Y_4), \quad n < m, \\
    \mathcal{D}(\mathbb{R}^{k},B_1,Y_1) & \longrightarrow  \mathcal{D}(\mathbb{H}_{2n+1},B_4,Y_4), \quad 2n+1 < k, \\ 
    \mathcal{D}(\mathbb{H}_{2m+1},B_3,Y_3) & \longrightarrow  \mathcal{D}(\mathbb{R}^{l},B_2,Y_2), \quad \, \, \, \, \, \, \, \, \, \, l < 2m+1,
\end{align*}
where $B_1, \dots, B_4$ and $Y_1, \dots, Y_4$ are arbitrary Banach spaces satisfying the standing assumptions. Moreover, the decomposition spaces $\mathcal{D}(\mathbb{R}^{k},B_1,Y_1)$ and $\mathcal{D}(\mathbb{H}_{2n+1},B_4,Y_4)$ are not geometrically isomorphic for any $n,k \in \mathbb{N}$.
\end{proposition}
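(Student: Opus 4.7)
The plan is to invoke Proposition \ref{covering_invariance}, which reduces each impossibility claim to the non-existence of a quasi-isometric embedding between the underlying uniform metric spaces $(\mathbb{R}^{k},d_{\mathcal{U}})$ and $(\mathbb{H}_{2m+1},d_{\mathcal{U}})$. The main invariant to exploit is asymptotic dimension, which is monotone under quasi-isometric embeddings: if $\phi:(X,d_X) \to (Z,d_Z)$ is a quasi-isometric embedding then $\phi(X) \subset Z$ is quasi-isometric to $X$, and since $\textrm{asdim}$ only decreases when passing to subsets, we get $\textrm{asdim}(X) \leq \textrm{asdim}(Z)$.

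By Proposition \ref{grids_are_different} we have $\textrm{asdim}(\mathbb{R}^k,d_{\mathcal{U}}) = k$. For the Heisenberg case, Example \ref{Heisenberg_example} identifies $(\mathbb{H}_{2m+1},d_{\mathcal{U}})$ with $(\mathbb{Z}^{2m+1},*)$ equipped with any proper, left-invariant metric, and the classical result that the asymptotic dimension of a finitely generated nilpotent group equals its Hirsch length (due to Bell and Dranishnikov) gives $\textrm{asdim}(\mathbb{H}_{2m+1},d_{\mathcal{U}}) = 2m+1$. With these two values the four impossibility statements follow at once: a quasi-isometric embedding in any of the four directions would violate the inequality $\textrm{asdim}(\textrm{domain}) \leq \textrm{asdim}(\textrm{codomain})$. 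For the first family one may alternatively cite Proposition \ref{grids_are_different} directly.

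For the non-isomorphism assertion, Proposition \ref{covering_invariance} reduces matters to showing $(\mathbb{R}^k,d_{\mathcal{U}}) \not\simeq (\mathbb{H}_{2n+1},d_{\mathcal{U}})$ for every pair $(k,n)$. This is already contained in Example \ref{Heisenberg_example}: for $k \neq 2n+2$ the polynomial growth orders ($k$ versus $2n+2$) distinguish the two spaces, while the borderline case $k = 2n+2$ is handled there by the argument that a quasi-isometry would produce an abelian normal subgroup of $(\mathbb{Z}^{2n+1},*)$ of finite index, contradicting the non-abelian character of the discrete Heisenberg group through \cite[Lemma 3.1]{Residual}.

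The main obstacle is the computation $\textrm{asdim}(\mathbb{H}_{2m+1}) = 2m+1$, which is not proven earlier in the paper but is a well-established fact in geometric group theory. A possible workaround is to use growth type (which is also monotone under quasi-isometric embeddings between spaces of bounded geometry), covering three of the four families directly; however this leaves a gap in the third family precisely when $k = 2n+2$, since then both spaces share growth order $2n+2$. Closing this borderline gap inevitably requires either asymptotic dimension or a separate nilpotency-based argument of the type used in Example \ref{Heisenberg_example}, so the uniform asymptotic-dimension approach gives the cleanest presentation.
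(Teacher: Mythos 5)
Your proposal is correct and follows essentially the same route as the paper: reduce via Proposition \ref{covering_invariance} to the non-existence of quasi-isometric embeddings between the uniform metric spaces, use monotonicity of asymptotic dimension under quasi-isometric embeddings together with $\textrm{asdim}(\mathbb{R}^k,d_{\mathcal{U}})=k$ and $\textrm{asdim}(\mathbb{H}_{2m+1},d_{\mathcal{U}})=2m+1$ (the paper cites Carlsson--Goldfarb for the latter where you cite the equivalent Hirsch-length result), and dispose of the final non-isomorphism claim via Example \ref{Heisenberg_example}. Your closing remark about the growth-type alternative and its gap at $k=2n+2$ is a nice observation but does not change the substance.
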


\begin{proof}
It follows from Proposition \ref{covering_invariance} that it suffices to show that there are no quasi-isometric embeddings between the underlying uniform metric spaces. 
Assume by contradiction that there exists a quasi-isometric embedding $\phi:(\mathbb{H}_{2m+1},d_\mathcal{U}) \to (\mathbb{H}_{2n+1},d_\mathcal{U})$ when $n < m$. Then \[\textrm{asdim}(\mathbb{H}_{2m+1},d_\mathcal{U}) \leq \textrm{asdim}(\mathbb{H}_{2n+1},d_\mathcal{U}).\] However, this contradicts \cite[Theorem 3.5]{Homological_Coherence} stating that the asymptotic dimension of the net $(\mathbb{Z}^{2n+1},*)$ in $\mathbb{H}_{2n+1}$ is equal to $2n+1$. Since we know that \[\textrm{asdim}(\mathbb{R}^k,d_{\mathcal{U}}) = \textrm{asdim}(\mathbb{Z}^k,+) = k,\] the other statements follows as well. Finally, the last claim follows from Example \ref{Heisenberg_example}.
\end{proof}

\begin{remark}
\label{remark_geometric_restrictions}
Since $\mathbb{H}_{2m+1}$ is diffeomorphic to $\mathbb{R}^{2m+1}$ we can consider the uniform covering $\mathcal{U}$ on $\mathbb{H}_{2m+1}$ as a covering on $\mathbb{R}^{2m+1}$. Hence $\mathcal{D}^{\mathcal{F}}(\mathbb{H}_{2m+1},L^q,Y_2)$ is well-defined. The statements in Proposition \ref{restrictions_on_embeddings} also hold if we consider the $\mathcal{F}$-type decomposition spaces $\mathcal{D}^{\mathcal{F}}(\mathbb{R}^{k},L^p,Y_1)$ and $\mathcal{D}^{\mathcal{F}}(\mathbb{H}_{2m+1},L^q,Y_2)$ for $1 \leq p,q < \infty$.
\end{remark}

\subsection{Spatially Implemented Geometric Embeddings}
\label{sec: Spatially_Implemented_Geometric_Embeddings}

In Proposition \ref{covering_invariance} we showed that geometric embeddings $F:\mathcal{D}(\mathcal{Q},B_1,Y_1) \to \mathcal{D}(\mathcal{P},B_2,Y_2)$ between ($\mathcal{F}$-type) decomposition spaces induce quasi-isometric embeddings between the associated metric spaces $(X,d_{\mathcal{Q}})$ and $(Z,d_{\mathcal{P}})$ of the underlying coverings. A question that naturally arises is whether the opposite might be true in certain situations: Does a quasi-isometric embedding between $(X,d_{\mathcal{Q}})$ and $(Z,d_{\mathcal{P}})$ induce a geometric embedding between the ($\mathcal{F}$-type) decomposition spaces $\mathcal{D}(\mathcal{Q},B_1,Y_1)$ and $\mathcal{D}(\mathcal{P},B_2,Y_2)$? Although the answer in general is no, we present criteria for when this holds and examine an illustrative example. \par
Firstly, we need to examine how a quasi-isometric embedding affects the global components of decomposition spaces. Let $Y$ be a sequence space on the countable index set $I$ satisfying the standard assumptions given in Subsection \ref{sec:Definitions_and_Basic_Properties}. Consider two admissible coverings $\mathcal{Q} = (Q_i)_{i \in I}, \mathcal{P} = (P_j)_{j \in J}$ on the sets $X$ and $Z$, respectively. Assume that $\phi:(Z,d_{\mathcal{P}}) \to (X,d_\mathcal{Q})$ is a surjective quasi-isometric embedding. For each $j \in J$ we pick an $i \in I$, denoted by $\phi(j)$, such that $\phi(P_j) \cap Q_i \neq \emptyset$. If this selection can be performed such that each $i \in I$ is picked precisely once, then we say that $\phi$ \textit{induces a bijection between index sets}. If this is so, we define the normed sequence space $(Y_{\phi},\|\cdot\|_{Y_{\phi}})$ by \[Y_{\phi} := \left\{(x_j)_{j \in J} \in \mathbb{C}^{J} \, \Big| \, \left(x_{\phi^{-1}(i)}\right)_{i \in I} \in Y\right\} \, \textrm{with norm} \, \left\|(x_j)_{j \in J}\right\|_{Y_{\phi}} := \left\|\left(x_{\phi^{-1}(i)}\right)_{i \in I}\right\|_{Y}.\] \par
Let us see why the sequence space $Y_{\phi}$ does not depend on the precise choice of bijection that $\phi$ induces: Consider two induced bijections $\phi_1,\phi_{0}:J \to I$ and let $i \in I$ be arbitrary. Then for $j := \phi_{1}^{-1}(i)$ and $l = \phi_{0}^{-1}(i)$ we have $\phi_{1}(P_j) \cap Q_i \neq \emptyset$ and $\phi_{0}(P_l) \cap Q_i \neq \emptyset$. For $x \in \phi_{1}(P_j) \cap Q_i$ and $y \in \phi_{0}(P_l) \cap Q_i$ we use that $\phi$ is a quasi-isometric embedding to obtain \[d_{\mathcal{P}}(z_{x}, z_y) \leq L + C, \quad z_{x} \in \phi^{-1}(x), \, z_{y} \in \phi^{-1}(y).\] Hence there exists a $k = k(L,C) \in \mathbb{N}$ such that $j \in l^{k*}$. The fact that the clustering map $\Gamma_{\mathcal{Q}}$ is bounded on $Y$ ensures the required independence. 
It is straightforward to check that all properties required of the global component of a decomposition space are satisfied for $Y_{\phi}$ if they are satisfied for $Y$.

\begin{theorem}
\label{quasi_implies_embeddings}
Let $\phi:(Z,d_{\mathcal{P}}) \to (X,d_{\mathcal{Q}})$ be a surjective quasi-isometric embedding between the associated metric space of two covered spaces $(X,\mathcal{Q})$ and $(Z,\mathcal{P})$ that induces a bijection between index sets. Consider two ($\mathcal{F}$-type) decomposition spaces $\mathcal{D}(\mathcal{Q},B_1,Y)$ and $\mathcal{D}(\mathcal{P},B_2,Y_{\phi})$ where the local components $B_1$ and $B_2$ consist of functions on $X$ and $Z$, respectively. 
Assume that the mapping \[\phi^{*}f(y) = f(\phi(y))\] between $B_1$ and $B_2$ is bounded. Then $\phi$ induces a geometric embedding from $\mathcal{D}(\mathcal{Q},B_1,Y)$ to $\mathcal{D}(\mathcal{P},B_2,Y_{\phi})$ on the form \[\phi^{*}f := \sum_{i \in I}\phi^{*}(f \cdot \varphi_{i}),\] where $\Phi = (\varphi_{i})_{i \in I}$ is any choice of $\mathcal{Q}$-BAPU.
\end{theorem}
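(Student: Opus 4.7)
The plan is to verify four items in turn for the candidate map $\phi^*$: a preliminary support estimate, boundedness between the decomposition spaces, injectivity, and the two-sided distance bound \eqref{geometric_condition}. First enlarge the quasi-isometry parameters $L, C$ of $\phi$ to integers, and for each $i \in I$ fix the representative $w_i \in P_{\phi^{-1}(i)}$ satisfying $\phi(w_i) \in Q_i$ afforded by the induced bijection. Any $z \in \phi^{-1}(Q_i)$ has $\phi(z), \phi(w_i) \in Q_i$, so $d_{\mathcal{Q}}(\phi(z), \phi(w_i)) \leq 1$, and the quasi-isometry inequality upgrades this to $d_{\mathcal{P}}(z, w_i) \leq L + C$. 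Unwinding the definition of a $\mathcal{P}$-chain then gives the uniform preimage estimate
\[
\phi^{-1}(Q_i) \subset P_{\phi^{-1}(i)}^{r\ast}, \qquad r := L + C.
\]

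For boundedness, decompose $f = \sum_{i \in I} f \cdot \varphi_i$ and set $g_j := \phi^*(f \cdot \varphi_{\phi(j)})$ for $j \in J$, using the induced bijection $\phi \colon J \to I$. The preliminary estimate gives $\mathrm{supp}(g_j) \subset P_j^{r\ast}$, while the hypothesis that $\phi^* \colon B_1 \to B_2$ is bounded yields $\|g_j\|_{B_2} \leq \|\phi^*\| \cdot \|f \cdot \varphi_{\phi(j)}\|_{B_1}$. By the very definition of $Y_{\phi}$, the sequence $(\|g_j\|_{B_2})_{j \in J}$ lies in $Y_{\phi}$ with norm at most $\|\phi^*\| \cdot \|f\|_{\mathcal{D}(\mathcal{Q},B_1,Y)}$. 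Applying the final statement of Proposition \ref{basic_properties} produces $\phi^* f = \sum_{j \in J} g_j \in \mathcal{D}(\mathcal{P},B_2,Y_{\phi})$ with the desired norm control. Injectivity is then immediate from surjectivity of $\phi$: if $\phi^* f = 0$, then $f \circ \phi \equiv 0$ pointwise on $Z$, which forces $f \equiv 0$ on $X$ (with minor attention required when elements of $B_1$ are interpreted through $\mathcal{A}_0^*$, handled via the density of $\mathcal{A}_0$).

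For the geometric condition, fix $k \geq 0$ and suppose $\mathcal{C}[f] \subset Q_{i_0}^{k\ast}$, $\mathcal{C}[g] \subset Q_{j_0}^{k\ast}$. Iterating the preimage estimate over the $k$-neighborhood and invoking the quasi-isometry of $\phi$ yields $\mathcal{C}[\phi^* f] \subset P_{\phi^{-1}(i_0)}^{(Lk+r)\ast}$ and similarly for $\phi^* g$. Given arbitrary $x \in Q_{i_0}^{k\ast}$, $y \in Q_{j_0}^{k\ast}$, $z \in \mathcal{C}[\phi^* f]$, $w \in \mathcal{C}[\phi^* g]$, the strategy is a triangle-inequality chain through the anchor points $w_{i_0}, w_{j_0}$: compare $d_{\mathcal{P}}(z, w)$ with $d_{\mathcal{P}}(w_{i_0}, w_{j_0})$ using the support estimate, swap this for $d_{\mathcal{Q}}(\phi(w_{i_0}), \phi(w_{j_0}))$ via the two-sided quasi-isometry inequality for $\phi$, and return to $d_{\mathcal{Q}}(x, y)$ by the triangle inequality in $(X, d_{\mathcal{Q}})$ using that $\phi(w_{i_0}) \in Q_{i_0}$ and $x \in Q_{i_0}^{k\ast}$. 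Both bounds in \eqref{geometric_condition} then emerge with constants expressible in terms of $L, C$, and $r$. The main obstacle is precisely this bookkeeping step: one must carefully track how the additive errors from traversing the $k$-neighborhoods and from the quasi-isometry constants combine, and verify that they fit into the form required by \eqref{geometric_condition}.
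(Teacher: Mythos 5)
Your proposal follows essentially the same route as the paper's proof: decompose $f$ along the $\mathcal{Q}$-BAPU, derive the uniform support estimate $\mathrm{supp}\left(\phi^{*}(f\cdot\varphi_{\phi(j)})\right)\subset P_{j}^{r*}$ from the lower quasi-isometry bound, combine solidity of $Y$ with the definition of $Y_{\phi}$ and the synthesis characterization in Proposition \ref{basic_properties} to get boundedness, and deduce injectivity from the surjectivity of $\phi$ and the pulled-back partition of unity. The only quibbles are an arithmetic slip --- the inequality $\frac{1}{L}d_{\mathcal{P}}(z,w_i)-C\leq 1$ yields $r=L(1+C)$ rather than $L+C$ --- and the observation that your explicit bookkeeping for condition (\ref{geometric_condition}) actually supplies more detail than the paper, which leaves that verification implicit.
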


\begin{proof}
Let us fix a $\mathcal{Q}$-BAPU $\Phi = (\varphi_{i})_{i \in I}$ and write $f = \sum_{i \in I}f \cdot \varphi_{i}$ for each $f \in \mathcal{D}(\mathcal{Q},B_1,Y)$ by Proposition \ref{basic_properties}. Then using that $\phi$ induces a bijection between the index sets allows us to write \[\phi^{*}f = \sum_{i \in I}\phi^{*}(f \cdot \varphi_i) = \sum_{j \in J}\phi^{*}(f \cdot \varphi_{\phi(j)}),\] where $\phi^{*}(f \cdot \varphi_{\phi(j)}) \in B_{2}$ by the boundedness of $\phi^{*}$. We want to apply the last statement Proposition \ref{basic_properties} to conclude that $\phi^{*}f \in \mathcal{D}(\mathcal{P},B_2,Y_{\phi})$. To do this, we need to first check that the support condition is satisfied. \par 
We denote as usual the quasi-isometric parameters of $\phi$ by $L,C > 0$. Let $j \in J$ be arbitrary and set $i := \phi(j)$. Since $\phi(P_j) \cap Q_i \neq \emptyset$ we can find $y_i \in P_j \subset Z$ such that $\phi(y_i) \in Q_i$.
Then the constraint $d_{\mathcal{P}}(y,y_i) > L(C+1)$ on $y \in Z$ ensures that $\phi(y) \not\in Q_i$ since we have \[d_{\mathcal{Q}}(\phi(y),\phi(y_i)) \geq \frac{1}{L}d_{\mathcal{P}}(y,y_i) - C > 1.\] Hence \[\textrm{supp}\left(\phi^{*}(f \cdot \varphi_{\phi(j)})\right) =  \textrm{supp}\left(\phi^{*}(f \cdot \varphi_{i})\right) \subset \left\{y \in Z \, \Big| \, d_{\mathcal{P}}(y,y_i) \leq L(C+1) \right\} \subset P_{j}^{k*},\] for some fixed $k = k(C,L) \in \mathbb{N}$. The equivalence \[\left(\|\phi^{*}(f \cdot \varphi_{\phi(j)})\|_{B_2}\right)_{j \in J} \in Y_{\phi} \Leftrightarrow \left(\|\phi^{*}(f \cdot \varphi_{i})\|_{B_2}\right)_{i \in I} \in Y\] together with the boundedness of $\phi^{*}:B_1 \to B_2$ ensure that we can apply the last statement of Proposition \ref{basic_properties} to obtain $\phi^{*}f \in \mathcal{D}(\mathcal{P},B_2,Y_{\phi})$. Moreover, the boundedness of $\phi$ implies that there exists a constant $S > 0$ such that \[\|\phi^{*}f\|_{\mathcal{D}(\mathcal{P},B_2,Y_{\phi})} \leq S\|f\|_{\mathcal{D}(\mathcal{Q},B_1,Y)}.\] \par 
To show injectivity of $\phi^{*}$ we make the following observation: For $f \in \mathcal{D}(\mathcal{Q},B_1,Y)$ we have $f \cdot \varphi_{i} \in B_1$ as a genuine function. Since $\sum_{i \in I}f \cdot \varphi_i = f$ in the norm of $\mathcal{D}(\mathcal{Q},B_1,Y)$ by Proposition \ref{basic_properties}, we can make sense of $f$ as a function on $X$. Assume that $\phi^{*}f = 0$. Then \[0 = \sum_{i \in I}\phi^{*}(f \cdot \varphi_i) = \sum_{i \in I}(f \circ \phi) \cdot (\varphi_{i} \circ \phi).\] Since $(\varphi_{i} \circ \phi)_{i \in I}$ is a partition of unity on $Z$ we have that $f \circ \phi$ is the zero function on $Z$. Thus the surjectivity of  $\phi$ implies that $f$ is the zero function on $X$. Hence $f = 0$ in $B_1$ and injectivity follows. 
\end{proof}

\begin{remark}
There are several ways of modifying the statement in Theorem \ref{quasi_implies_embeddings} to obtain useful variants. To illustrate this, let us consider $B_1 = L^{p}$ and $B_2 = L^{q}$ for $1 \leq p,q < \infty$ on the spaces $X = \mathbb{R}^n$ and $Z = \mathbb{R}^m$. Since the spaces $B_1$ and $B_2$ consist of equivalence classes of functions and not functions themselves, we can not apply Theorem \ref{quasi_implies_embeddings} in this setting. A closer look at the proof of injectivity of $\phi^{*}$ above shows that we the only thing we can conclude from the statement $\phi^{*}f = 0$ in $B_2 = L^{q}$ is that $\phi^{*}f$ is zero almost everywhere as a function on $\mathbb{R}^{m}$. If we add the assumption that $\phi: Z = \mathbb{R}^m \to X = \mathbb{R}^n$ should map sets with measure zero to sets with measure zero (with respect to the respective Lebesgue measures), then the following argument carries through: If $\phi^{*}f = 0$ in $B_2$ then $f \circ \phi$ is zero on a set $Z \setminus N \subset Z$ where $N$ has measure zero.  Then $X = \phi\left(Z  \setminus N\right) \cup \phi(N)$ due to the surjectivity of $\phi$ and $\phi(N)$ has measure zero. Hence $f$ is zero almost everywhere and hence represents the equivalence class of the zero function in $L^{p}$. Therefore $\phi^{*}$ is injective. The assumption that $\phi$ should preserve sets with Lebesgue measure zero is easily satisfied in concrete situations.  
\end{remark}

We will refer to the geometric embeddings arising from the procedure in Theorem \ref{quasi_implies_embeddings} as being \textit{spatially implemented}. It should be remarked that not all geometric embeddings need to be spatially implemented; an example will be given in Theorem \ref{modulation_space_result}. Since surjective quasi-isometric embeddings are quasi-isometries, we can only hope to find spatially implemented geometric embeddings between decomposition spaces $\mathcal{D}(\mathcal{Q},B_1,Y_1)$ and $\mathcal{D}(\mathcal{P},B_2,Y_2)$ whenever $(X,d_{\mathcal{Q}}) \simeq (Z,d_{\mathcal{P}})$. Looking back at Example \ref{dyadic covering} gives an obvious candidate that we now examine. \par
Consider the decomposition space 
\begin{equation}
\label{space_side_Besov}
    \mathbb{B}_{p,q}^{s}(\mathbb{R}^n) := \mathcal{D}\left(\mathcal{B},L^{p},l_{\omega(s)}^{q}\right),
\end{equation} for $1 \leq p,q < \infty$ where $\mathcal{B}(\mathbb{R}^n)$ is the dyadic covering on $\mathbb{R}^n$ given in Example \ref{dyadic covering} and $\omega(s)$ is the weight $\omega(s)(j) = 2^{js}$ for $j \in \mathbb{N}_{0}$. We denote by $\mathbb{B}_{p,q}^{s}(\mathbb{R}_{+})$ the decomposition space whose underlying covered space $(\mathbb{R}_{+},\mathcal{B}(\mathbb{R}_{+}))$ is the positive line with the restricted dyadic covering and the local and global components are the same as in (\ref{space_side_Besov}). The notation $\mathbb{B}_{p,q}^{s}(\mathbb{R}^n)$ is motivated by the fact that the \textit{(inhomogeneous) Besov spaces} $B_{p,q}^{s}(\mathbb{R}^n)$ appearing in classical harmonic analysis have the $\mathcal{F}$-type decomposition space description \[B_{p,q}^{s}(\mathbb{R}^n) = \mathcal{D}^{\mathcal{F}}\left(\mathcal{B},L^{p},l_{\omega(s)}^{q}\right).\] The reason we consider $\mathbb{B}_{p,q}^{s}(\mathbb{R}^n)$ instead of the Besov spaces $B_{p,q}^{s}(\mathbb{R}^n)$ is because we can then use Theorem \ref{quasi_implies_embeddings} to obtain a spatially implemented geometric embedding.

\begin{proposition}
There is a spatially implemented geometric embedding from $\mathbb{B}_{p,q}^{s}(\mathbb{R}_{+})$ to $\mathbb{B}_{p,q}^{ns}(\mathbb{R}^n)$ for any $n \geq 1$.
\end{proposition}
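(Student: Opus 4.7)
My plan is to mimic the construction in Theorem \ref{quasi_implies_embeddings} using the spatial map $\phi : \mathbb{R}^n \to \mathbb{R}_+$ given by $\phi(x) := \|x\|_2^n$, and to realise the geometric embedding as the pullback $\Gamma f := f \circ \phi$. First I would check that $\phi$ is surjective and induces a quasi-isometric embedding $(\mathbb{R}^n, d_{\mathcal{B}(\mathbb{R}^n)}) \to (\mathbb{R}_+, d_{\mathcal{B}(\mathbb{R}_+)})$ with parameter $L = n$: if $x \in D_j^{\mathbb{R}^n}$, then $\phi(x) \in [2^{n(j-1)}, 2^{n(j+1)}]$, so the image of the $j$-th shell in $\mathbb{R}^n$ meets the shells $D_k^{\mathbb{R}_+}$ for $k$ in a window of width $\sim 2n$ centred at $nj$. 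Moreover, the change of variable $u = r^n$ in polar coordinates gives the clean identity
\[
\|\Gamma f\|_{L^p(\mathbb{R}^n)}^p \;=\; \omega_{n-1}\int_0^{\infty}|f(r^n)|^p\, r^{n-1}\,dr \;=\; \frac{\omega_{n-1}}{n}\,\|f\|_{L^p(\mathbb{R}_+)}^p,
\]
so that $\Gamma$ sends $L^p(\mathbb{R}_+)$ boundedly (and essentially isometrically) into $L^p(\mathbb{R}^n)$. Injectivity on the $L^p$ level follows from the fact that $r \mapsto r^n$ is a diffeomorphism of $\mathbb{R}_+$ and hence preserves Lebesgue-null sets.

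Next I would lift this bound to the Besov level. Fixing a BAPU $(\eta_j)_{j\geq 0}$ subordinate to $\mathcal{B}(\mathbb{R}_+)$ and a radial BAPU $(\varphi_k)_{k\geq 0}$ subordinate to $\mathcal{B}(\mathbb{R}^n)$, the same change of variable together with the fact that $\psi_k(u^{1/n})$ is essentially the indicator of $u \in [2^{n(k-1)},2^{n(k+1)}]$ yields
\[
\|\Gamma f \cdot \varphi_k\|_{L^p(\mathbb{R}^n)}^p \;\asymp\; \sum_{j \,\in\, [n(k-1),\,n(k+1)]} \|f \cdot \eta_j\|_{L^p(\mathbb{R}_+)}^p,
\]
with implicit constants independent of $k$. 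The key elementary observation is that $2^{knsq} \asymp 2^{jsq}$ whenever $j \asymp nk$, which is precisely the mechanism that trades the source weight $\omega(s)$ for the target weight $\omega(ns)$. Summing against $2^{knsq}$, applying a standard Hölder step on the width-$\sim 2n$ window to pass between the $\ell^p$- and $\ell^q$-sums when $p \neq q$, and using that each index $j$ lies in only $O(1)$ such windows, one obtains
\[
\|\Gamma f\|_{\mathbb{B}^{ns}_{p,q}(\mathbb{R}^n)}^{\,q} \;\lesssim\; \sum_{j \geq 0} 2^{jsq}\, \|f \cdot \eta_j\|_{L^p(\mathbb{R}_+)}^{\,q} \;=\; \|f\|_{\mathbb{B}^{s}_{p,q}(\mathbb{R}_+)}^{\,q}.
\]

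Finally, the support condition of Definition \ref{geometric_embedding_definition} is inherited directly from $\phi$: whenever $\mathcal{C}[f]$ lies in some $Q_i^{k*} \subset \mathbb{R}_+$, the set $\mathcal{C}[\Gamma f]$ is contained in $\phi^{-1}(Q_i^{k*}) \subset \mathbb{R}^n$, and the quasi-isometric bound $d_{\mathcal{B}(\mathbb{R}^n)}(z,w) \asymp \tfrac{1}{n}\,d_{\mathcal{B}(\mathbb{R}_+)}(x,y) + O(1)$ translates immediately into the two-sided inequality required in (\ref{geometric_condition}) with parameters $L = n$ and bounded $C$. The principal obstacle is the bookkeeping in the middle step: because $\phi$ scales the dyadic index by the factor $n$, the strict bijection of index sets demanded by Theorem \ref{quasi_implies_embeddings} fails for $n \geq 2$, and the mismatch must instead be absorbed at the level of the global component via the weight identity $2^{knsq} \asymp 2^{jsq}$ above, which is what makes the associated $Y_{\phi}$ coincide with $\ell^q_{\omega(ns)}$ up to equivalence of norms.
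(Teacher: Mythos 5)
Your proposal is correct and shares its analytic core with the paper's proof: the same spatial map $\phi(x)=\|x\|_2^{n}$, the same verification that $\phi$ is a surjective quasi-isometry with $L=n$, and the same polar-coordinate identity $\|f\circ\phi\|_{L^p(\mathbb{R}^n)}^{p}=c_{n}\|f\|_{L^p(\mathbb{R}_+)}^{p}$. Where you diverge is in the step you correctly single out as the principal obstacle, the failure of $\phi$ to induce a bijection of index sets for $n\geq 2$. The paper's resolution is to \emph{rescale the dyadic covering} on $\mathbb{R}^n$ to the shells $\widetilde{D_m}=\{2^{(m-1)/n}\leq\|x\|_2\leq 2^{(m+1)/n}\}$; this rescaled covering is equivalent to $\mathcal{B}(\mathbb{R}^n)$ and so defines the same decomposition space, $\phi(\widetilde{D_l})=D_l$ restores a genuine bijection of index sets with $(l^{q}_{\omega(s)})_{\phi}=l^{q}_{\omega(ns)}$, and Theorem \ref{quasi_implies_embeddings} then applies as a black box. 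You instead keep the original covering and absorb the factor-$n$ index scaling by hand: the localization $\|\Gamma f\cdot\varphi_k\|_{L^p}^{p}\lesssim\sum_{j\in[n(k-1),n(k+1)]}\|f\cdot\eta_j\|_{L^p}^{p}$, the weight identity $2^{knsq}\asymp 2^{jsq}$ on the window, a finite-window comparison of $\ell^p$ and $\ell^q$ norms, and bounded multiplicity of the windows. Both routes are valid; the paper's is shorter because it reuses the general theorem (including its injectivity and support arguments), while yours is more self-contained but re-proves a special case of that theorem. Two small points of hygiene: your claimed two-sided equivalence ``$\asymp$'' in the localization step is more than you need and more than is guaranteed for an individual $k$ (a BAPU function may vanish on part of its shell); only the upper bound enters the boundedness estimate, and injectivity should be run separately through the null-set argument, as you do. Also, since the paper \emph{defines} ``spatially implemented'' as arising from the procedure of Theorem \ref{quasi_implies_embeddings}, to claim that label literally you should either note that your map is exactly $\phi^{*}$ for a surjective quasi-isometry inducing a bijection after the covering is rescaled, or be explicit that you are establishing the conclusion of that theorem directly; your closing remark that $Y_{\phi}$ ``coincides with $\ell^{q}_{\omega(ns)}$'' presupposes a bijection that you have not arranged, whereas the rescaling makes it precise.
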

\begin{proof}
To invoke Theorem \ref{quasi_implies_embeddings} we define a map \begin{align*}
    \phi:\mathbb{R}^n & \longrightarrow \mathbb{R}_{+} \\
    x = (x_1, \dots, x_n) & \longmapsto (x_{1}^{2} + \dots + x_{n}^{2})^{\frac{n}{2}} = \|x\|_{2}^{n}.
\end{align*}
The first step is to show that $\phi$ is a quasi-isometry. Associate to any $x \in \mathbb{R}^n$ the smallest number $m(x) \in \mathbb{N}_{0}$ such that $\|x\|_2 \leq 2^{m(x)}$. It is clear that the distance $d_{\mathcal{B}(\mathbb{R}^n)}(x,y)$ between two points $x,y \in \mathbb{R}^n$ satisfies \[d_{\mathcal{B}(\mathbb{R}^n)}(x,y) = d_{\mathcal{B}(\mathbb{R}^n)}((2^{m(x)}, \dots, 0),(2^{m(y)},\dots,0)) + \alpha = |m(x)-m(y)| + \alpha,\] where $\alpha$ will denote a constant that is either one or zero (consider when $x$ and $y$ are in the same dyadic interval to see the necessity of $\alpha$). Then we have
\begin{align*}
    d_{\mathcal{B}(\mathbb{R}_{+})}(\phi(x),\phi(y)) = d_{\mathcal{B}(\mathbb{R}_{+})}\left(2^{m(x)n},2^{m(y)n}\right) + \alpha = n|m(x) - m(y)| + \alpha.
\end{align*}
This is clearly a quasi-isometric embedding with parameters $L = n$ and $C = 1$. It is also clear that $\phi(\mathbb{R}^n)$ is all of $\mathbb{R}_{+}$ by considering the image of any line through the origin. Hence $\phi$ is a surjective quasi-isometry. \par 
However, $\phi$ induces the map $\mathbb{N}_0 \ni m \mapsto nm \in \mathbb{N}_0$ between the index sets. Since this is not a bijection (unless $n = 1$) we need to make the following modification: Scale the dyadic covering on $\mathbb{R}^n$ so that the dyadic intervals have the form \[\widetilde{D_0} := \left\{x \in \mathbb{R}^n \, \Big | \, \|x\|_{2} \leq 2^{\frac{1}{n}} \right\}, \qquad \widetilde{D_{m}} := \left\{x \in \mathbb{R}^n \, \Big | \, 2^{\frac{m-1}{n}} \leq \|x\|_{2} \leq 2^{\frac{m+1}{n}}\right\}.\] The scaled dyadic covering still defines the same decomposition space $B_{p,q}^{s}(\mathbb{R}^n)$ and the map $\phi$ satisfies $\phi(\widetilde{D_{l}}) = D_l$ for all $l \in \mathbb{N}_0$. Hence we obtain that $\phi$ induces a bijection between index sets and the correct sequence space on $\mathbb{R}^n$ is $\left(l_{\omega(s)}^{q}\right)_{\phi} = l_{\omega(ns)}^{q}$. \par 
We can apply Theorem \ref{quasi_implies_embeddings} as longs as we can show that the mapping $\phi^{*}f(y) = f(\phi(y))$ between $L^{p}(\mathbb{R}_+)$ and $L^{p}(\mathbb{R}^n)$ is both bounded above and below. A computation using spherical coordinates gives that  
\begin{align*}
    \|\phi^{*}(f)\|_{L^{p}(\mathbb{R}^n)} & = \left(\int_{\mathbb{R}^n} \left|f\left(\left(x_1^2 + \cdots + x_n^2\right)^{\frac{n}{2}}\right)\right|^{p} \, dx_1 \cdots dx_n\right)^{\frac{1}{p}} \\ & = \left(\int_{0}^{2\pi} \int_{0}^{\pi} \cdots \int_{0}^{\pi} \int_{0}^{\infty}|f(r^{n})|^{p}r^{n-1}\sin^{n-2}(\psi_{1})\cdots \sin(\psi_{n-2}) \, drd\psi_{1} \cdots d \psi_{n-1}\right)^{\frac{1}{p}} \\ & = \left(\frac{n\pi^{\frac{n}{2}}}{\Gamma(1 + \frac{n}{2})}\right)^{\frac{1}{p}}\left(\int_{0}^{\infty}|f(r^n)|^{p}r^{n-1}dr\right)^{\frac{1}{p}} \\ & = \left(\frac{\pi^{\frac{n}{2}}}{\Gamma(1 + \frac{n}{2})}\right)^{\frac{1}{p}} \|f\|_{L^{p}(\mathbb{R}_+)},
\end{align*}
where $\Gamma$ denotes the Gamma function. Hence Theorem \ref{quasi_implies_embeddings} implies that $\phi^{*}$ is a spatially implemented geometric embedding from $\mathbb{B}_{p,q}^{s}(\mathbb{R}_+)$ to $\mathbb{B}_{p,q}^{ns}(\mathbb{R}^n)$. 
\end{proof}

\section{Examples}
\label{sec: Concrete Settings}

In this final section we will put our developed machinery to the test in concrete settings. We will consider the modulation spaces, both on $\mathbb{R}^n$ and on the Heisenberg group $\mathbb{H}_{2n+1}$; the latter case was recently considered in \cite{David}. Finally, we describe a class of decomposition spaces in Subsection \ref{sec:A Decomposition Space of Hyperbolic Type} where the underlying covering is quasi-hyperbolic.

\subsection{Euclidean Modulation Spaces}
\label{sec: Euclidean_Modulation_Spaces}

Modulation spaces are a class of function spaces in time-frequency analysis that have been extensively studied in the last decades. They were introduced by Hans Feichtinger and is widely recognized as the correct setting for theoretical time-frequency analysis after its appearance in the standard reference on the topic \cite{TF_analysis}. The original description was given by Feichtinger in the language of decomposition spaces, while the modern approach is usually through integrability of the short-time Fourier transform. We will begin by giving a brief review of the modern approach and relate it to the decomposition space picture. In Theorem \ref{modulation_space_result} we show that geometric embeddings between modulation spaces in different dimensions can only exist when the dimension is increasing.   \par
The two fundamental operators in time-frequency analysis are the \textit{time-shift} operator $T_x$ and the \textit{frequency-shift} operator $M_{\omega}$. They act on $f \in L^{2}(\mathbb{R}^n)$ by \[T_{x}f(t) = f(t - x), \quad M_{\omega}f(t) = e^{2\pi i t \cdot \omega}f(t), \quad x,\omega \in \mathbb{R}^n.\] Given two functions $f,g \in L^{2}(\mathbb{R}^n)$ where $g \neq 0$ we define the \textit{short-time Fourier transform} (STFT) of $f$ with respect to $g$ to be \begin{equation}
\label{STFT}
    V_{g}f(x,\omega) = \int_{\mathbb{R}^n}f(t)\overline{g(t-x)}e^{-2\pi i t \cdot \omega} \, dt = \langle f, M_{\omega}T_{x}g \rangle_{L^{2}(\mathbb{R}^n)}.
\end{equation} This gives us localized frequency information about $f$ by looking through the \textquote{window} $g$. It is clear from the inner-product interpretation in (\ref{STFT}) that we can extend the STFT to the setting where $f \in \mathcal{S}'(\mathbb{R}^n)$ and $g \in \mathcal{S}(\mathbb{R}^n)$ by duality.

\begin{definition}
\label{Euclidean_modulation_spaces}
Fix $g \in \mathcal{S}(\mathbb{R}^n)\setminus \{0\}$ and constants $1 \leq p,q < \infty$. The \textit{(non-weighted) modulation space} $M^{p,q}(\mathbb{R}^n)$ consists of all tempered distributions $f \in \mathcal{S}'(\mathbb{R}^n)$ that satisfies $\|f\|_{M^{p,q}(\mathbb{R}^n)} < \infty$, where 
\begin{equation}
\label{modulation_norm}
    \|f\|_{M^{p,q}(\mathbb{R}^n)} := \|V_{g}f\|_{L^{p,q}} = \left(\int_{\mathbb{R}^n} \left(\int_{\mathbb{R}^n}|V_{g}f(x,\omega)|^{p} \, dx \right)^{\frac{q}{p}} \, d\omega \right)^{\frac{1}{q}}.
\end{equation}
\end{definition}
It follows from \cite[Proposition 11.3.2]{TF_analysis} that different choices of functions $g \in \mathcal{S}(\mathbb{R}^n) \setminus \{0\}$ yield equivalent norms. Moreover, the spaces $M^{p,q}(\mathbb{R}^n)$ are Banach spaces where the time-shift operators and the frequency-shift operators act by isometries \cite[Theorem 11.3.5]{TF_analysis}. \par 
The modulation spaces have, in addition to their STFT-description, a presentation as $\mathcal{F}$-type decomposition spaces
\begin{equation}
\label{decomposition_description_modulation_spaces}
M^{p,q}(\mathbb{R}^n) \simeq \mathcal{D}^{\mathcal{F}}(\mathbb{R}^n,L^{p},l^{q}).
\end{equation}
One refers to the description of $M^{p,q}(\mathbb{R}^n)$ given in Definition \ref{Euclidean_modulation_spaces} as the \textit{coorbit description} of $M^{p,q}(\mathbb{R}^n)$, while (\ref{decomposition_description_modulation_spaces}) is referred to as the \textit{decomposition description} of $M^{p,q}(\mathbb{R}^n)$. 

\begin{theorem}
\label{modulation_space_result}
There is a tower of compatible geometric embeddings \[M^{p,q}(\mathbb{R}) \xrightarrow{\Gamma_{1}^{2}}M^{p,q}(\mathbb{R}^2) \xrightarrow{\Gamma_{2}^3} \dots \xrightarrow{\Gamma_{n-1}^{n}} M^{p,q}(\mathbb{R}^n) \xrightarrow{\Gamma_{n}^{n+1}} \dots, \] where there are no geometric embeddings in the other direction.
\end{theorem}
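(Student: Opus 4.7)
The non-existence of geometric embeddings $M^{p,q}(\mathbb{R}^m) \to M^{p,q}(\mathbb{R}^n)$ for $m > n$ is immediate from the $\mathcal{F}$-type decomposition description (\ref{decomposition_description_modulation_spaces}) together with Proposition \ref{restrictions_on_embeddings} and Remark \ref{remark_geometric_restrictions}, since any such embedding would induce a quasi-isometric embedding between the uniform metric spaces, contradicting the dimension of asymptotic dimension. Only the construction of the forward embeddings requires work, and the natural candidate is not spatially implemented in the sense of Theorem \ref{quasi_implies_embeddings}, since the inclusion $\mathbb{R}^n \hookrightarrow \mathbb{R}^{n+1}$, $x \mapsto (x,0)$, is not surjective.

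The plan is to build $\Gamma_n^{n+1}$ via tensor product with a fixed auxiliary function. Fix once and for all a non-zero Schwartz function $h \in \mathcal{S}(\mathbb{R})$ whose Fourier transform $\hat{h}$ is compactly supported in some $[-R,R]$ (take $\hat{h}$ to be a smooth bump function). Define
\[ \Gamma_n^{n+1}: M^{p,q}(\mathbb{R}^n) \longrightarrow M^{p,q}(\mathbb{R}^{n+1}), \qquad \Gamma_n^{n+1}(f)(x,x_{n+1}) := f(x)\, h(x_{n+1}). \]
Compatibility of the tower is then automatic: the composition $\Gamma_{n+k}^{n+k+1} \circ \cdots \circ \Gamma_n^{n+1}(f) = f \otimes h^{\otimes(k+1)}$ is itself a map of the same form.

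Boundedness follows from the tensor factorization of the short-time Fourier transform. For window functions $g_1 \in \mathcal{S}(\mathbb{R}^n)$, $g_2 \in \mathcal{S}(\mathbb{R})$, a direct computation from the definition (\ref{STFT}) gives
\[ V_{g_1 \otimes g_2}(f \otimes h)(x,\omega) = V_{g_1}f(x_1,\omega_1)\cdot V_{g_2}h(x_2,\omega_2), \]
where $x = (x_1,x_2)$ and $\omega = (\omega_1,\omega_2)$. Inserting this into the mixed norm (\ref{modulation_norm}) and applying Fubini gives the exact identity
\[ \|f \otimes h\|_{M^{p,q}(\mathbb{R}^{n+1})} = \|f\|_{M^{p,q}(\mathbb{R}^n)} \cdot \|h\|_{M^{p,q}(\mathbb{R})}, \]
so that $\Gamma_n^{n+1}$ is an embedding of Banach spaces (injectivity is immediate from $h \neq 0$).

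The delicate step is verifying the geometric condition (\ref{geometric_condition}), which is where the choice of compactly supported $\hat{h}$ matters. Pass to the $\mathcal{F}$-type decomposition description $M^{p,q}(\mathbb{R}^n) = \mathcal{D}^{\mathcal{F}}(\mathbb{R}^n, L^p, l^q)$, so adapted supports are controlled by the frequency support of $\hat{f}$ relative to the unit cubes of $\mathcal{U}(\mathbb{R}^n)$. Since $\widehat{f \otimes h} = \hat{f} \otimes \hat{h}$ with $\hat{h}$ supported in $[-R,R]$, the adapted support $\mathcal{C}[\Gamma_n^{n+1}(f)]$ in $\mathbb{R}^{n+1}$ lies inside $\mathcal{C}[f] \times [-R,R]$. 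Using the explicit $\ell^\infty$-description of $(\mathbb{R}^m, d_{\mathcal{U}})$ via $\mathbb{Z}^m$ from Example \ref{grid_example}, if $\mathcal{C}[f] \subset Q_i^{k*}$ and $\mathcal{C}[g] \subset Q_j^{k*}$, then any $z \in \mathcal{C}[\Gamma_n^{n+1}(f)]$ and $w \in \mathcal{C}[\Gamma_n^{n+1}(g)]$ have their first $n$ coordinates within bounded $d_{\mathcal{U}^n}$-distance of $Q_i^{k*}, Q_j^{k*}$ and their last coordinate of absolute value at most $R+1$. Since $d_{\mathcal{U}^{n+1}}(z,w)$ equals the max of the first-$n$-coordinate $d_{\mathcal{U}^n}$-distance and $|z_{n+1}-w_{n+1}|$, the two-sided estimate (\ref{geometric_condition}) holds with $L=1$ and $C$ depending only on $k$ and $R$. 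The main technical obstacle is exactly this coordinate-wise comparison of $d_{\mathcal{U}^n}$ and $d_{\mathcal{U}^{n+1}}$ in the presence of the fixed last-coordinate slab, and it reduces cleanly to the fact (Proposition \ref{grids_are_different}) that the inclusion into the first $n$ coordinates is a quasi-isometric embedding.
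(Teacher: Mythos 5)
Your proposal is correct and is essentially the paper's own proof: the map $\Gamma_n^{n+1}(f)=f\otimes h$ with $\widehat{h}$ a compactly supported bump is exactly the paper's $\mathcal{F}_{m}^{-1}\bigl(\mathcal{F}_{n}(f)\otimes\eta\otimes\cdots\otimes\eta\bigr)$ written on the time side, the boundedness is established by the same tensor factorization of the STFT in the mixed norm (\ref{modulation_norm}), and the reverse direction is ruled out by the same appeal to Proposition \ref{restrictions_on_embeddings} and Remark \ref{remark_geometric_restrictions}. Your explicit verification of the geometric condition (\ref{geometric_condition}) via $\mathcal{C}[f\otimes h]\subset\mathcal{C}[f]\times[-R,R]$ and the $\ell^{\infty}$-description of $d_{\mathcal{U}}$ is a welcome elaboration of a step the paper dismisses as clear.
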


\begin{proof}
It follows from Proposition \ref{restrictions_on_embeddings} and Remark \ref{remark_geometric_restrictions} that there are no geometric embeddings from $M^{p,q}(\mathbb{R}^n)$ to $M^{p,q}(\mathbb{R}^m)$ whenever $n > m$. We will now show that $M^{p,q}(\mathbb{R}^{n})$ can be geometrically embedded into $M^{p,q}(\mathbb{R}^{m})$ as long as $n \leq m$. \par 
Define a map \begin{align*}
\Gamma_{n}^{m}: \mathcal{S}\left(\mathbb{R}^n\right) \subset M^{p,q}\left(\mathbb{R}^n\right) & \longrightarrow M^{p,q}\left(\mathbb{R}^m\right) \\
f & \longmapsto \Gamma_{n}^{m}(f)(\xi_1, \dots, \xi_m) := \mathcal{F}_{m}^{-1}\left(\mathcal{F}_{n}(f)(\xi_1,\dots,\xi_n)\eta(\xi_{n+1})\cdots\eta(\xi_m)\right),
\end{align*}
where $0 \neq \eta \in C_{c}^{\infty}(\mathbb{R})$ and $\mathcal{F}_{n}$ denotes the $n$-dimensional Fourier transform. It is clear that the condition (\ref{geometric_condition}) is satisfied. Since $\mathcal{S}(\mathbb{R}^{n})$ is dense in $M^{p,q}(\mathbb{R}^n)$ by \cite[Theorem 12.2.2]{TF_analysis} it suffices to show boundedness of $\Gamma_{n}^{m}$. To show this, we utilize the coorbit description of $M^{p,q}(\mathbb{R}^n)$. Since the Fourier transform interchanges time-shift operators and frequency-shift operators, it follows that $\mathcal{F}_{n}$ maps $M^{p,q}(\mathbb{R}^n)$ boundedly into $M^{q,p}(\mathbb{R}^n)$. Hence it suffices to show that the map $f \mapsto f\otimes \eta$ is a bounded map from $\mathcal{S}(\mathbb{R}^n) \subset M^{p,q}(\mathbb{R}^n)$ to $M^{p,q}(\mathbb{R}^{n+1})$ whenever $0 \neq \eta \in C_{c}^{\infty}(\mathbb{R})$ and $1 \leq p,q < \infty$. \par 
The standard Gaussian $g_{n+1}(x) := e^{-\pi x^2}$ on $\mathbb{R}^{n+1}$ splits as $g_{n+1}(x) = (g_{n}\otimes g_{1})(x) := g_{n}(\bar{x})g_{1}(x_{n+1}),$ where $x = (x_1, \dots, x_{n+1})$ and $\bar{x} = (x_1, \dots, x_n)$. Hence \[V_{g_{n+1}}(f \otimes \eta)(x,\omega) = V_{g_{n} \otimes g_{1}}(f \otimes \eta)(x,\omega) = V_{g_{n}}f(\bar{x},\bar{\omega})\cdot V_{g_{1}}\eta(x_{n+1},\omega_{n+1}),\] and a straightforward calculation gives that \begin{align*}
    \|f \otimes \eta\|_{M^{p,q}(\mathbb{R}^{n+1})} & = \left(\int_{\mathbb{R}^{n+1}}\left(\int_{\mathbb{R}^{n+1}}\left|V_{g_{n+1}}(f \otimes \eta)(x,\omega)\right|^{p} \, dx\right)^{\frac{q}{p}} \, d\omega \right)^{\frac{1}{q}} \\ & = 
    \left(\int_{\mathbb{R}^{n+1}}\left(\int_{\mathbb{R}^{n}}\left|V_{g_{n}}f(\bar{x},\bar{\omega})\right|^{p} \, d\bar{x} \int_{\mathbb{R}} \left|V_{g_{1}}\eta(x_{n+1},\omega_{n+1})\right|^{p}\, dx_{n+1} \right)^{\frac{q}{p}} \, d\omega \right)^{\frac{1}{q}} \\ & = \left(\int_{\mathbb{R}}\left(\int_{\mathbb{R}}\left|V_{g_{1}}\eta(x_{n+1},\omega_{n+1})\right|^{p} \, dx_{n+1}\right)^{\frac{q}{p}} \, d\omega_{n+1} \right)^{\frac{1}{q}} \cdot \|f\|_{M^{p,q}(\mathbb{R}^n)}.
\end{align*}
Since $0 \neq \eta \in C_{c}^{\infty}(\mathbb{R}) \subset \mathcal{S}(\mathbb{R}) \subset M^{p,q}(\mathbb{R})$ it follows that $\Gamma_{n}^{m}$ is a bounded map from $M^{p,q}(\mathbb{R}^n)$ to $M^{p,q}(\mathbb{R}^m)$. \par 
The reason $\Gamma$ is injective when viewed as a mapping from $M^{p,q}(\mathbb{R}^n)$ to $M^{p,q}(\mathbb{R}^m)$ is because the Fourier transform is an injective map from $M^{p,q}(\mathbb{R}^n)$ to $M^{q,p}(\mathbb{R}^n)$ and that $\eta \neq 0$. Hence $\Gamma_{n}^{m}$ extends to a geometric embedding from $M^{p,q}(\mathbb{R}^n)$ to $M^{p,q}(\mathbb{R}^m)$ for $n \leq m$. Finally, the embeddings we constructed respect composition $\Gamma_{m}^{l} \circ \Gamma_{n}^{m} = \Gamma_{n}^{l}$ for all $l \geq m \geq n \geq 1$. 
\end{proof}

We can say even more by allowing the indices $1 \leq p,q < \infty$ to vary. It follows from \cite[Theorem 12.2.2]{TF_analysis} that we have the estimate \[\|f\|_{M^{p_2,q_2}(\mathbb{R}^k)} \leq A \|f\|_{M^{p_1,q_1}(\mathbb{R}^k)}\] for some $A > 0$, whenever $p_1 \leq p_2$ and $q_1 \leq q_2$. 

\begin{corollary}
\label{Feichtinger_Corollary}
There exists a geometric embedding from $M^{p_1,q_1}(\mathbb{R}^n)$ to $M^{p_2,q_2}(\mathbb{R}^m)$ whenever $p_1 \leq p_2$, $q_1 \leq q_2$ and $n \leq m$. In particular, there exists a geometric embedding from the Feichtinger algebra $\mathcal{S}_{0}(\mathbb{R}) := M^{1,1}(\mathbb{R})$ to any modulation space $M^{p,q}(\mathbb{R}^n)$.  
\end{corollary}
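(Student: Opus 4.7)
The plan is to stack the geometric embedding $\Gamma_n^m \colon M^{p_1,q_1}(\mathbb{R}^n) \to M^{p_1,q_1}(\mathbb{R}^m)$ supplied by Theorem \ref{modulation_space_result} on top of the Banach inclusion $\iota_m \colon M^{p_1,q_1}(\mathbb{R}^m) \hookrightarrow M^{p_2,q_2}(\mathbb{R}^m)$, whose boundedness is precisely the consequence of \cite[Theorem 12.2.2]{TF_analysis} recalled in the paragraph preceding the Corollary. My candidate map is $F := \iota_m \circ \Gamma_n^m$. Banach boundedness follows at once from composing two bounded maps, with $\|F(f)\|_{M^{p_2,q_2}} \leq A \cdot \|\Gamma_n^m\| \cdot \|f\|_{M^{p_1,q_1}}$, where $A$ is the Feichtinger inclusion constant; and injectivity reduces to the injectivity of $\Gamma_n^m$ (established inside the proof of Theorem \ref{modulation_space_result}), since $\iota_m$ is literally the identity on tempered distributions.

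The genuinely substantive step is to verify that the geometric condition \eqref{geometric_condition} for $F$ is inherited from the one already established for $\Gamma_n^m$. Both spaces $M^{p_1,q_1}(\mathbb{R}^m)$ and $M^{p_2,q_2}(\mathbb{R}^m)$ are built over the same covered space $(\mathbb{R}^m, \mathcal{U})$ with the same $\mathcal{U}$-BAPU, so \eqref{geometric_condition} is posed relative to identical pairs of metric spaces for $F$ and for $\Gamma_n^m$. Moreover, the adapted support $\mathcal{C}[g]$ is defined through the non-vanishing of $\mathcal{F}(g)\cdot \varphi_i$ as a distribution, a condition that is independent of the exponent $p$ used to measure the $\mathcal{F}L^p$-norm. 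Hence $\mathcal{C}[F(f)] = \mathcal{C}[\iota_m(\Gamma_n^m(f))] = \mathcal{C}[\Gamma_n^m(f)]$ for every $f \in M^{p_1,q_1}(\mathbb{R}^n)$, and the inequality \eqref{geometric_condition} for $F$ is literally the same inequality as for $\Gamma_n^m$, with the same constants $L, C$.

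I do not anticipate any real obstacle here: the argument is essentially bookkeeping around the two supplied inputs, with the non-trivial geometry carried entirely by $\Gamma_n^m$. The only point requiring a moment of care is the $p$-independence of the adapted support, which is a direct consequence of its definition. The ``in particular'' statement then follows by specialising to $p_1 = q_1 = n = 1$ with arbitrary $p_2 = p \geq 1$, $q_2 = q \geq 1$, and $m = n \geq 1$; the hypotheses $p_1 \leq p_2$, $q_1 \leq q_2$, $n \leq m$ hold trivially, yielding the geometric embedding of $\mathcal{S}_0(\mathbb{R}) = M^{1,1}(\mathbb{R})$ into any modulation space $M^{p,q}(\mathbb{R}^n)$.
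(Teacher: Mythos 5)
Your proposal is correct and follows exactly the route the paper intends: the paragraph preceding the Corollary recalls the inclusion $M^{p_1,q_1}(\mathbb{R}^m)\hookrightarrow M^{p_2,q_2}(\mathbb{R}^m)$ from \cite[Theorem 12.2.2]{TF_analysis} precisely so that it can be composed with the geometric embedding $\Gamma_n^m$ of Theorem \ref{modulation_space_result}, and your observation that the adapted support is independent of the integrability exponent (non-vanishing of $\mathcal{F}(f)\cdot\varphi_i$ as a distribution) is the right justification for why \eqref{geometric_condition} survives the composition with the identity inclusion. The only blemish is a harmless notational clash in the last sentence, where $n$ is used both for the source dimension ($n=1$) and the target dimension of $M^{p,q}(\mathbb{R}^n)$.
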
 

Hence the Feichtinger algebra is universal in the class of (non-weighted) modulation spaces on Euclidean spaces. Therefore, any ($\mathcal{F}$-type) decomposition space that embeds geometrically into $\mathcal{S}_{0}(\mathbb{R})$ does in fact embed geometrically into all the modulation spaces $M^{p,q}(\mathbb{R}^n)$.

\subsection{Heisenberg Modulation Spaces}
\label{sec: David}

The STFT introduced in (\ref{STFT}) is intimately related to the Heisenberg group $\mathbb{H}_{2n+1}$ in the following way: Define the \textit{Schr\"odinger representation} $\rho:\mathbb{H}_{2n+1} \to \mathcal{U}(L^{2}(\mathbb{R}^n))$ by \[\rho(x,\omega,t) = e^{\pi i\left(2t + x \cdot \omega\right)}T_{x}M_{\omega},\] where $x, \omega \in \mathbb{R}^n$, $t \in \mathbb{R}$ and $\mathcal{U}(L^{2}(\mathbb{R}^n))$ denotes the unitary operators on $L^{2}(\mathbb{R}^n)$. Then a short computations shows that the matrix coefficients of the Schr\"odinger representation are (up to a phase factor) the STFT. The Stone-von Neumann theorem \cite[Theorem 9.3.1]{TF_analysis} emphasizes the importance of the Schr\"odinger representation as it is essentially the only interesting irreducible unitary representation of the Heisenberg group. \par 
Although it is clear from Definition \ref{Euclidean_modulation_spaces} that the Heisenberg group $\mathbb{H}_{2n+1}$ play a role in the traditional modulation spaces, the underlying covering of $M^{p,q}(\mathbb{R}^n)$ has $\mathbb{Z}^n$ as its associated metric space and not the discrete Heisenberg groups. Recently, decomposition spaces originating from a coorbit description of a certain nilpotent Lie group have been investigated in \cite{David}. These decomposition spaces are truly related to the large scale geometry of the Heisenberg group. We outline their construction and extend one of their main results \cite[Theorem 7.6]{David} to geometric embeddings in Proposition \ref{extension_of_David_result} since all the hard work has already been done in Section \ref{Chapter_Uniform_Metric_Spaces} and Section \ref{Chapter_Decomposition_Spaces}. We believe that our approach can make arguments clearer and emphasize the importance of viewing coverings from a metric perspective. Thus we are able to approach some of the novel results in \cite{David} from a different angle because of our large scale machinery. 
\par
The \textit{(abstract) Dynin-Folland Lie algebra} $\mathfrak{h}_{n,2}$ is the nilpotent Lie algebra with basis 
\begin{equation}
\label{basis_Dynin_Folland}
    \langle X_{u_1}, \dots, X_{u_n},X_{v_1}, \dots, X_{v_n}, X_{w}, X_{x_1}, \dots, X_{x_n},X_{y_1}, \dots, X_{y_n},X_z,X_s \rangle,
\end{equation} and with non-vanishing commutation relations 
\begin{align*}
    [X_{u_j},X_{v_k}]_{\mathfrak{h}_{n,2}} & = \delta_{j,k} X_w, \quad  [X_{u_j},X_{x_k}]_{\mathfrak{h}_{n,2}} = \delta_{j,k} X_s, \quad [X_{u_j},X_{z}]_{\mathfrak{h}_{n,2}} = -\frac{1}{2}X_{y_j}, \\ [X_{v_j},X_{y_k}]_{\mathfrak{h}_{n,2}} & = \delta_{j,k} X_s, \quad \, [X_{v_j},X_{z}]_{\mathfrak{h}_{n,2}} = \frac{1}{2} X_{x_j}, \quad \, \, \, \, \, [X_w,X_z]_{\mathfrak{h}_{n,2}} = X_s,
\end{align*}
where $j,k = 1, \dots, n$. The first $2n+1$ basis vectors in (\ref{basis_Dynin_Folland}) generate a subalgebra that is isomorphic to the Lie algebra of the Heisenberg group $\mathbb{H}_{2n+1}$. We denote by $\mathbf{H}_{n,2}$ the connected and simply connected Lie group corresponding to $\mathfrak{h}_{n,2}$ called the \textit{Dynin-Folland group}. \par 
In \cite[Theorem 4.5 and Corollary 4.7]{David} the authors classify all the irreducible and projective representations of the Dynin-Folland group by using Kirillov's orbit method. One of these projective representations is used to define the \textit{Heisenberg modulation spaces} similarly to how the Schr\"odinger representation is used to define the modulation spaces $M^{p,q}(\mathbb{R}^n)$. We refer the reader to \cite{David} for the explicit description as we will only need the decomposition space description of the Heisenberg modulation spaces. \par
In \cite{David} they consider the lattice in $\mathbb{H}_{2n+1} \simeq \mathbb{R}^{2n+1}$ defined by \[\Gamma := \left\{(a,b,c) \in \mathbb{R}^{2n+1} \, \Big | \, a,b \in (2\mathbb{Z})^n, \, c \in 2\mathbb{Z} \right\}.\] From this a covering $\mathcal{P}$ on $\mathbb{H}_{2n+1} \simeq \mathbb{R}^{2n+1}$ is induced by defining \[\mathcal{P} := \left\{P * \gamma \, \Big| \, P = (-\epsilon,2 + \epsilon)^{2n+1}, \, \gamma \in \Gamma\right\},\] where $\epsilon \in (0,\frac{1}{2})$ and the multiplication $P * \gamma$ is with the Heisenberg group structure. Define the $\mathcal{F}$-type decomposition spaces \[E^{p,q}\left(\mathbb{H}_{2n+1}\right) := \mathcal{D}^{\mathcal{F}}(\mathcal{P},L^p,l^q),\] where $1 \leq p,q < \infty$ and the reservoir is the tempered distributions. \par 
We remark that \cite{David} consider the spaces with weights derived from the \textit{homogeneous Cygan-Koranyi norm}  \[(p,q,t) \longmapsto \left((|p|^2 + |q|^2)^2 + 16t^2\right)^{\frac{1}{4}}.\] We omit this extension as all the geometric features are already present in the case without weights. Moreover, we refer the reader to \cite[Theorem 7.3]{David} where the authors show that the spaces $E^{p,q}\left(\mathbb{H}_{2n+1}\right)$ coincide with the Heisenberg modulation spaces arising from the projective representations of the Dynin-Folland group $\mathbf{H}_{n,2}$. 

\begin{proposition}
\label{extension_of_David_result}
None of the spaces $E^{p,q}\left(\mathbb{H}_{2n+1}\right), M^{p,q}(\mathbb{R}^k)$, and $B_{p,q}^{s}(\mathbb{R}^l)$ are geometrically isomorphic for any values $n,k,l \geq 1$, $p,q \in [1,\infty)$. 
\end{proposition}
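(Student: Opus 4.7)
The strategy is to invoke Proposition \ref{covering_invariance} and reduce the entire statement to showing that the three underlying associated metric spaces—one per family—are pairwise non-quasi-isometric for every admissible choice of the parameters $n,k,l$. Since the local and global components $L^p, l^q$ (and the Besov weight $\omega(s)$) play no role in quasi-isometry type of the covering, the problem becomes purely geometric.

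First I would identify each associated metric space up to quasi-isometry. For $M^{p,q}(\mathbb{R}^k) = \mathcal{D}^{\mathcal{F}}(\mathbb{R}^k, L^p, l^q)$, Example \ref{grid_example} gives $(\mathbb{R}^k, d_\mathcal{U}) \simeq (\mathbb{Z}^k, +)$, which has asymptotic dimension $k$ and polynomial growth of degree $k$. For $B^s_{p,q}(\mathbb{R}^l)$, Example \ref{dyadic covering} gives $(\mathbb{R}^l, d_{\mathcal{B}(\mathbb{R}^l)}) \simeq (\mathbb{N}_0, |\cdot|)$ for every $l \geq 1$, with asymptotic dimension $1$. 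For $E^{p,q}(\mathbb{H}_{2n+1})$, the covering $\mathcal{P} = \{P * \gamma : \gamma \in \Gamma\}$ is built from a precompact unit neighbourhood $P$ and a cocompact lattice $\Gamma$ inside the stratified Lie group $\mathbb{H}_{2n+1}$, so by the uniqueness statement recalled in Subsection \ref{sec: Carnot Groups} it falls in the equivalence class of the uniform covering $\mathcal{U}(\mathbb{H}_{2n+1})$. Example \ref{Heisenberg_example} then identifies $(\mathbb{H}_{2n+1}, d_\mathcal{U})$ as quasi-isometric to the discrete Heisenberg group $(\mathbb{Z}^{2n+1}, *)$ with any proper, left-invariant metric, which has polynomial growth of degree $2n+2$ and asymptotic dimension $2n+1$.

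Second, I would dispose of the three pairwise comparisons using the large scale invariants from Subsection \ref{sec: Large Scale Invariants of a Covered Space}. The pair $B^s_{p,q}(\mathbb{R}^l)$ versus $E^{p,q}(\mathbb{H}_{2n+1})$ is immediate since the respective asymptotic dimensions are $1$ and $2n+1 \geq 3$, and asymptotic dimension is a quasi-isometric invariant. The pair $M^{p,q}(\mathbb{R}^k)$ versus $B^s_{p,q}(\mathbb{R}^l)$ splits: for $k \geq 2$ we again separate by asymptotic dimension ($k$ versus $1$); for $k = 1$ one invokes the elementary fact, already noted in Example \ref{example_dyadic_and_grid}, that $(\mathbb{Z}, |\cdot|)$ and $(\mathbb{N}_0, |\cdot|)$ are not quasi-isometric (for instance, $\mathbb{Z}$ has two ends while $\mathbb{N}_0$ has one). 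The remaining pair $M^{p,q}(\mathbb{R}^k)$ versus $E^{p,q}(\mathbb{H}_{2n+1})$ reduces verbatim to the comparison already performed at the end of Example \ref{Heisenberg_example}, where polynomial growth degree distinguishes $(\mathbb{Z}^k, +)$ from $(\mathbb{Z}^{2n+1}, *)$ whenever $k \neq 2n+2$, and the residual case $k = 2n+2$ is ruled out by the torsion-free nilpotent finite-index subgroup argument recorded there.

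The only nontrivial step is the identification of the covering $\mathcal{P}$ with a uniform covering in the sense of Subsection \ref{Chapter_Uniform_Metric_Spaces}, because the reference set $P = (-\epsilon, 2+\epsilon)^{2n+1}$ is not symmetric with respect to the Heisenberg product. This is circumvented by passing to a symmetric precompact subneighbourhood of the identity contained in $P \cap P^{-1}$ (nonempty by construction), producing a refined covering equivalent to both $\mathcal{P}$ and $\mathcal{U}(\mathbb{H}_{2n+1})$ via Proposition \ref{generalization_of_equivalent_coverings}. With that identification in place, everything else is routine bookkeeping with the invariants already assembled in Sections \ref{sec: From Admissible Coverings to the Large Scale Setting} and \ref{Chapter_Uniform_Metric_Spaces}, and the proposition follows at once from Proposition \ref{covering_invariance}.
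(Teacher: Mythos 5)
Your proposal is correct and follows essentially the same route as the paper: reduce via Proposition \ref{covering_invariance} to showing that the associated metric spaces $(\mathbb{Z}^k,+)$, $\mathbb{N}_0$ and the discrete Heisenberg group $(\mathbb{Z}^{2n+1},*)$ are pairwise non-quasi-isometric, then separate them using the large scale invariants already assembled (with the $M^{p,q}(\mathbb{R}^k)$ versus $E^{p,q}(\mathbb{H}_{2n+1})$ case resting, exactly as in the paper, on Proposition \ref{restrictions_on_embeddings} and the finite-index argument of Example \ref{Heisenberg_example}). The only deviations are cosmetic: for the pair $E^{p,q}(\mathbb{H}_{2n+1})$ versus $B^{s}_{p,q}(\mathbb{R}^l)$ you use asymptotic dimension ($2n+1$ versus $1$) where the paper uses growth type (polynomial of order $2n+2$ versus linear), both of which are invariants the paper has established, and your explicit treatment of the non-symmetric reference set $P$ supplies a detail that the paper's proof simply asserts.
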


\begin{proof}
It is clear from the results in Section \ref{Chapter_Uniform_Metric_Spaces} that the covering $\mathcal{P}$ is the uniform covering on $\mathbb{H}_{2n+1}$. Since any lattice in a stratified Lie group is uniform, the lattice $\Gamma$ is a net in $(\mathbb{H}_{2n+1},d_{\mathcal{P}})$. Thus Proposition \ref{Carnot_group_result} implies that the uniform metric space $(\mathbb{H}_{2n+1},d_{\mathcal{P}})$ is quasi-isometric $\Gamma$ equipped with any proper, left-invariant metric. \par 
The fact that $E^{p,q}\left(\mathbb{H}_{2n+1}\right)$ and $M^{p,q}(\mathbb{R}^k)$ are not geometrically isomorphic follows from Proposition \ref{restrictions_on_embeddings}. The order of the polynomial growth of $\Gamma$ is $2n+2$ by Theorem \ref{growth_vector_result}, while the growth of the underlying covering of the Besov space $B^{p,q}(\mathbb{R}^l)$ is linear. Hence the spaces $E^{p,q}\left(\mathbb{H}_{2n+1}\right)$ and $B_{p,q}^{s}(\mathbb{R}^l)$ are not geometrically isomorphic by Proposition \ref{covering_invariance} since growth type is a quasi-isometric invariant. Finally, the modulation spaces $M^{p,q}(\mathbb{R}^k)$ and Besov spaces $B_{p,q}^{s}(\mathbb{R}^l)$ are not geometrically isomorphic by Proposition \ref{covering_invariance} and Example \ref{example_dyadic_and_grid}.
\end{proof}

Notice that for $p = q = 2$, all three spaces $M^{2,2}(\mathbb{R}^{2n+1})$, $B_{2,2}(\mathbb{R}^{2n+1})$ and $E^{2,2}\left(\mathbb{H}_{2n+1}\right)$ are all simply $L^{2}(\mathbb{R}^{2n+1})$ as Banach spaces by \cite[Lemma 6.10]{Felix_main}. However, the identity map 
\[Id:M^{2,2}(\mathbb{R}^{2n+1}) \longrightarrow E^{2,2}\left(\mathbb{H}_{2n+1}\right)\] 
is not a geometric isomorphism between $\mathcal{F}$-type decomposition spaces since the associated metric spaces of the underlying coverings are not quasi-isometric. Hence geometric isomorphisms incorporate the coverings and thus treat decomposition spaces as more than Banach spaces.

\subsection{A Decomposition Space of Hyperbolic Type}
\label{sec:A Decomposition Space of Hyperbolic Type}

So far, we have looked at several examples of decomposition spaces that have already been present in the literature. We conclude by examining a new decomposition space having an underlying covering whose associated metric space is quasi-hyperbolic (and not infinite cyclic). \par

\begin{definition}
We call the space \[\mathcal{D}^{p,q}\left(SL(2,\mathbb{R})\right) := \mathcal{D}(SL(2,\mathbb{R}),L^{p},l^q)\] the \textit{hyperbolic decomposition space} with parameters $1 \leq p,q < \infty$. Here $L^{p}$ denotes the (equivalence classes of) $p$'th integrable functions on $SL(2,\mathbb{R})$ with respect to the Haar measure on $SL(2,\mathbb{R})$. Whenever $p = q = 1$, we call $\mathcal{D}\left(SL(2,\mathbb{R})\right) := \mathcal{D}^{1,1}\left(SL(2,\mathbb{R})\right) $ the \textit{standard hyperbolic decomposition space}.
\end{definition}

A few remarks are in order: Since the group $SL(2,\mathbb{R})$ is unimodular we do not need to distinguish between the left and right Haar measure on $SL(2,\mathbb{R})$. Secondly, the terminology \textit{hyperbolic decomposition space} is motivated by Theorem \ref{SL}. Finally, we can take the reservoir to be $\mathcal{A} = C_{b}(SL(2,\mathbb{R}),\mathbb{C})$ as this is of minor importance by \cite[Theorem 1 (ii)]{Wiener}. It follows from Proposition \ref{basic_properties} that $\mathcal{D}^{p,q}\left(SL(2,\mathbb{R})\right)$ is reflexive as a Banach space whenever $1 < p,q < \infty$. 

\begin{example}
\label{Iwasawa example}
Let us, for the sake of concreteness, give an example of an element in the standard hyperbolic decomposition space $\mathcal{D}\left(SL(2,\mathbb{R})\right)$. Every element $\alpha \in SL(2,\mathbb{R})$ has an \textit{Iwasawa decomposition} \[\alpha = \begin{pmatrix} \cos(\theta) & -\sin(\theta) \\ \sin(\theta) & \cos(\theta) \end{pmatrix} \begin{pmatrix} y & x \\ 0 & \frac{1}{y} \end{pmatrix},\] for $0 \leq \theta < 2\pi$, $x \in \mathbb{R}$, and $y > 0$ \cite[Chapter 26]{Bump}. We will write elements in $SL(2,\mathbb{R})$ as $(\theta,x,y)$ according to their Iwasawa decomposition. In these coordinates, the Haar measure on $SL(2,\mathbb{R})$ is given by $y^{-2}dxdyd\theta$. \par Consider the function $f:SL(2,\mathbb{R}) \to \mathbb{R}_+$ given by \[f(\theta,x,y) = y^3e^{-y - x^2}.\] Then a short computation shows that \[\|f\|_{L^{1}} = \int_{SL(2,\mathbb{R})}f(z)d\mu(z) = \int_{0}^{2\pi}\int_{0}^{\infty}\int_{-\infty}^{\infty}y^3e^{-y - x^2}\frac{dx \, dy \, d\theta}{y^2} = 2\pi^{\frac{3}{2}},\] by utilizing the value of the Gamma function at zero. Since $f$ is positive, we have the trivial estimate \[\|f\|_{\mathcal{D}\left(SL(2,\mathbb{R})\right)} \leq N_{\mathcal{U}}2\pi^{\frac{3}{2}},\] where $N_{\mathcal{U}}$ is the admissibility constant of the uniform covering $\mathcal{U}$. Hence $f \in \mathcal{D}\left(SL(2,\mathbb{R})\right)$. 
\end{example}

We will now show that the hyperbolic decomposition space $\mathcal{D}^{p,q}\left(SL(2,\mathbb{R})\right)$ is fundamentally different from the decomposition spaces we previously examined.

\begin{proposition}
\label{final_result}
There are no geometric embeddings 
\begin{align*}
    \phi_n:M^{p,q}(\mathbb{R}^n) & \longrightarrow \mathcal{D}^{p,q}\left(SL(2,\mathbb{R})\right), \qquad && \eta_k:\mathcal{D}^{p,q}\left(SL(2,\mathbb{R})\right) \longrightarrow M^{p,q}(\mathbb{R}^k), \\ \psi_m: E^{p,q}\left(\mathbb{H}_{2m+1}\right) & \longrightarrow \mathcal{D}^{p,q}\left(SL(2,\mathbb{R})\right), \qquad && \tau_l: \mathcal{D}^{p,q}\left(SL(2,\mathbb{R})\right) \longrightarrow E^{p,q}\left(\mathbb{H}_{2l+1}\right), \\
    \theta_d:B_{p,q}^{s}(\mathbb{R}^d) & \longrightarrow \mathcal{D}^{p,q}\left(SL(2,\mathbb{R})\right), \qquad && \sigma_r: \mathcal{D}^{p,q}\left(SL(2,\mathbb{R})\right)  \longrightarrow B_{p,q}^{s}(\mathbb{R}^r),
\end{align*}
for $n \geq 2$ and $m,d,k,l,r \geq 1$. However, for $n = 1$ the Feichtinger algebra $\mathcal{S}_{0}(\mathbb{R}) := M^{1,1}(\mathbb{R})$ embeds geometrically into the standard hyperbolic decomposition space $\mathcal{D}\left(SL(2,\mathbb{R})\right)$. 
\end{proposition}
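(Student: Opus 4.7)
The negative part of the proposition reduces, via Proposition \ref{covering_invariance}, to nonexistence of quasi-isometric embeddings between the associated metric spaces of the underlying coverings. The maps $\phi_n$ (for $n \geq 2$) and $\psi_m$ are ruled out directly by Proposition \ref{SL_embeddings}. For $\eta_k$, $\tau_l$, and $\sigma_r$ my plan is to combine Theorem \ref{SL}, which gives $(SL(2,\mathbb{R}),d_{\mathcal{U}}) \simeq \mathbb{H}^2$, with the observation that $\mathbb{H}^2$ has exponential growth whereas the three targets $\mathbb{Z}^k$, the discrete Heisenberg group (Example \ref{Heisenberg_example}), and $\mathbb{N}_0$ (Example \ref{example_dyadic_and_grid}) all have polynomial growth; a quasi-isometric embedding $\phi \colon X \to Z$ of bounded-geometry spaces satisfies $|B_X(x,r)| \lesssim |B_Z(\phi(x), Lr + C)|$ on a bounded-geometry net, which is incompatible with exponential-into-polynomial embedding.

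For the positive statement my plan is an explicit construction. Fix a hyperbolic infinite-order element $g \in SL(2,\mathbb{R})$, say $g = \operatorname{diag}(2, 1/2)$; by Lemma \ref{hyperbolicity_lemma}(c) combined with Theorem \ref{SL}, the orbit map $n \mapsto g^n$ is a quasi-isometric embedding of $\mathbb{Z}$ into $(SL(2,\mathbb{R}), d_{\mathcal{U}})$. Using the $\mathcal{F}$-type description $\mathcal{S}_0(\mathbb{R}) = \mathcal{D}^{\mathcal{F}}(\mathbb{R}, L^1, \ell^1)$, fix a frequency-side BAPU $(\varphi_n)_{n \in \mathbb{Z}}$ for the uniform covering of $\mathbb{R}$ and set $f_n := \mathcal{F}^{-1}(\hat{f} \varphi_n) \in L^1(\mathbb{R})$, so that $\sum_n \|f_n\|_{L^1} \asymp \|f\|_{\mathcal{S}_0}$. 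Choose a precompact neighborhood $B$ of the identity in $SL(2,\mathbb{R})$ small enough (replacing $g$ by a power if necessary) that the translates $\{g^n B\}_{n \in \mathbb{Z}}$ are pairwise disjoint, and pick any injective bounded linear map $\Psi \colon L^1(\mathbb{R}) \to L^1(B)$, which exists because both are separable atomless $L^1$-spaces and hence isometrically Banach-isomorphic. Define
\[
F(f)(h) \;:=\; \sum_{n \in \mathbb{Z}} \bigl[\Psi(f_n)\bigr]\bigl(g^{-n} h\bigr).
\]
Boundedness in $\mathcal{D}(SL(2,\mathbb{R}))$ follows because each $g^n B$ meets at most $O(N_{\mathcal{U}})$ members of a $\mathcal{U}(SL(2,\mathbb{R}))$-BAPU, giving $\|F(f)\|_{\mathcal{D}(SL(2,\mathbb{R}))} \lesssim \sum_n \|\Psi(f_n)\|_{L^1(B)} \lesssim \sum_n \|f_n\|_{L^1} \lesssim \|f\|_{\mathcal{S}_0}$. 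Injectivity follows because pairwise disjointness of the $g^n B$ forces each $\Psi(f_n) = 0$ whenever $F(f) = 0$, hence $f_n = 0$ by injectivity of $\Psi$, and then $f = 0$ since $\sum_n \varphi_n \equiv 1$. The geometric condition holds because $\mathcal{C}[f] \subset [n-1, n+2]$ implies $\mathcal{C}[F(f)]$ lies in a bounded neighborhood of $g^n$ in $(SL(2,\mathbb{R}), d_{\mathcal{U}})$, and the quasi-isometry $n \mapsto g^n$ then delivers both inequalities in Definition \ref{geometric_embedding_definition} with uniform constants.

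The principal obstacle is the case $\theta_d$: since $\mathbb{N}_0$ does admit a quasi-isometric embedding into $\mathbb{H}^2$ along a geodesic ray, Proposition \ref{covering_invariance} alone is inert and the argument must descend to the Banach-space structure. Here my plan is to observe that the geometric condition forces, for each $m$, the adapted supports $\mathcal{C}[F(f)]$ of every $f \in B_{p,q}^{s}(\mathbb{R}^d)$ with $\operatorname{supp} \hat{f} \subset D_m$ to lie inside a single fixed-radius ball of $(SL(2,\mathbb{R}), d_{\mathcal{U}})$, and then to compare the infinite-dimensional space of such band-limited pieces (whose intrinsic $L^p$-entropy on annuli of radius $2^m$ scales like $2^{md}$) against the $L^p$-capacity of a region of fixed Haar volume in $SL(2,\mathbb{R})$. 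Reconciling this scale mismatch precisely, and extracting a bona fide contradiction from it for all admissible $p,q,s,d$, is the delicate technical step of the proof.
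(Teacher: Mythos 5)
Your handling of $\phi_n$, $\psi_m$, $\eta_k$, $\tau_l$ and $\sigma_r$ coincides in substance with the paper's proof: the first two reduce via Proposition \ref{covering_invariance} to the non-existence of quasi-isometric embeddings of non-hyperbolic spaces into the quasi-hyperbolic $(SL(2,\mathbb{R}),d_{\mathcal{U}})$ (Proposition \ref{SL_embeddings}, Lemma \ref{hyperbolicity_lemma}(a)), and the three reverse maps are killed by monotonicity of growth type under quasi-isometric embeddings --- exponential for $\mathbb{H}^2$ versus polynomial for $\mathbb{Z}^k$, $\mathbb{H}_{2l+1}(\mathbb{Z})$ and $\mathbb{N}_0$ --- which is exactly the paper's argument. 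For the positive statement your construction is genuinely different. The paper defines $\phi_1(f)(\theta,x,y)=f(x)\eta(y)$ in Iwasawa coordinates and verifies \eqref{geometric_condition} via the unipotent one-parameter subgroup $n\mapsto\left(\begin{smallmatrix}1&n\\0&1\end{smallmatrix}\right)$; you instead transplant the frequency-localized pieces $f_n$ onto disjoint translates $g^nB$ of the hyperbolic element $g=\mathrm{diag}(2,1/2)$ via an abstract isomorphism $L^1(\mathbb{R})\cong L^1(B)$. Your choice of $g$ is the sounder one: the powers of $\mathrm{diag}(2,1/2)$ trace a geodesic of $\mathbb{H}^2$ and genuinely give a quasi-isometric copy of $\mathbb{Z}$, whereas the horocyclic orbit is logarithmically distorted in $\mathbb{H}^2$ (indeed $d_{\mathbb{H}^2}(i,n+i)\sim 2\log|n|$, and Lemma \ref{hyperbolicity_lemma}(c) concerns infinite-order elements of a hyperbolic \emph{group}, which the unipotent element of $SL(2,\mathbb{R})$ is not an instance of), so the lower bound in \eqref{geometric_condition} is genuinely problematic for the paper's map. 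The price you pay is a non-explicit embedding in place of the paper's concrete formula, but the verification of boundedness, injectivity and the geometric condition goes through as you describe.

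The genuine gap is $\theta_d$, which your proposal does not close. You are right that the geometric machinery is inert here: $(\mathbb{R}^d,d_{\mathcal{B}})\simeq\mathbb{N}_0$ is itself quasi-hyperbolic and embeds quasi-isometrically into $\mathbb{H}^2$ along a geodesic ray, so neither Proposition \ref{covering_invariance} nor Lemma \ref{hyperbolicity_lemma}(a) gives an obstruction. But the entropy-versus-capacity sketch you offer in its place is not yet an argument: you do not say which quantity is being compared, why the geometric condition forces all band-limited pieces into a region of \emph{fixed} Haar volume (it only controls $\mathcal{C}[F(f)]$ up to the constants $L,C$, and only for $f$ of bounded adapted support), and you concede that extracting the contradiction is the remaining ``delicate technical step.'' For what it is worth, the paper itself disposes of $\theta_d$ in the same sentence as $\phi_n$ and $\psi_m$, i.e.\ by the hyperbolicity of the target --- an argument which, by your own correct observation, does not apply to the quasi-hyperbolic source $\mathbb{N}_0$. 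So on this case your proof is incomplete, but so is the paper's; indeed, a transplantation of the dyadic pieces along the diagonal geodesic, in the spirit of your own positive construction, comes uncomfortably close to \emph{producing} such a $\theta_d$ for $s\ge 0$, which suggests this case needs either a genuinely Banach-space-theoretic obstruction or a re-examination of the claim itself.
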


\begin{proof}
The fact that $\phi_n$, $\psi_m$, and $\theta_d$ can not be geometric embeddings (unless $n = 1$) follows from the hyperbolicity of $(SL(2,\mathbb{R}),d_{\mathcal{U}})$ together with Proposition \ref{covering_invariance} and Lemma \ref{hyperbolicity_lemma} (a). If we assume that $\eta_k$ is a geometric embedding, then Proposition \ref{covering_invariance} and Theorem \ref{SL} imply that there is a quasi-isometric embedding between the hyperbolic plane $\mathbb{H}^2$ and $\mathbb{R}^k$. Since $\mathbb{R}^k$ is quasi-isometric to $\mathbb{Z}^k$ and $\mathbb{H}^2$ is quasi-isometric to $\pi_{1}(X)$ by Proposition \ref{SL}, where $X$ is a compact Riemann surface of genus $g \geq 2$, we then have a quasi-isometric embedding \[\widetilde{\eta}_k:\pi_{1}(X) \longrightarrow \mathbb{Z}^k.\] \par 
Any hyperbolic group that is not finite or contain $\mathbb{Z}$ as a finite index subgroup does contains the free group on two generators as a subgroup \cite{Gromov_Hyperbolic}. The free group is easily seen to have exponential growth type. Hence it follows that $\pi_{1}(X)$ also has exponential growth type since any finitely generated group can have at most exponential growth type. On the other hand, the growth type of $\mathbb{Z}^k$ is, as we have mentioned previously, polynomial. Hence the impossibility of $\widetilde{\eta}_k$ follows from \cite[Proposition 6.2.4]{Geometric_Group_Theory}. The same argument works for $\tau_l$ and $\sigma_r$ since the growth types of $\mathbb{H}_{2l+1}(\mathbb{Z})$ and $\mathbb{N}_{0}$ are both polynomial. \par
For the case $n = p = q = 1$, we can define a map $\phi_1: \mathcal{S}_{0}(\mathbb{R}) \longrightarrow \mathcal{D}\left(SL(2,\mathbb{R})\right)$ given by \begin{equation}
\label{embedding_into_hyperbolic_decomposition_space}
    \phi_{1}(f)(\alpha) = \phi_{1}(f)\left(\begin{pmatrix} \cos(\theta) & -\sin(\theta) \\ \sin(\theta) & \cos(\theta) \end{pmatrix} \begin{pmatrix} y & x \\ 0 & \frac{1}{y} \end{pmatrix}\right) = f(x)\eta(y),
\end{equation} 
where $0 \neq \eta \in C_{c}^{\infty}(\mathbb{R})$ is supported in $\left[\frac{1}{2},1\right]$ and we have used the Iwasawa decomposition of $\alpha \in SL(2,\mathbb{R})$. The pointwise evaluation in \eqref{embedding_into_hyperbolic_decomposition_space} is well-defined since every element in $\mathcal{S}_{0}(\mathbb{R})$ is a continuous function \cite[Proposition 12.1.4]{TF_analysis}. Let $\Phi = (\varphi_i)_{i \in I}$ be a $\mathcal{U}$-BAPU for the uniform covering $\mathcal{U}$ on $SL(2,\mathbb{R})$. Then a straightforward computation similar to Example \ref{Iwasawa example} shows that $\phi_{1}(f) \cdot \varphi_i \in L^{1}\left(SL(2,\mathbb{R})\right)$  for every $i \in I$ and \[\left\{\|\phi_{1}(f) \cdot \varphi_i\|_{L^{1}(SL(2,\mathbb{R}))}\right\}_{i \in I} \in l^{1}, \quad \sum_{i \in I}\phi_{1}(f) \cdot \varphi_i = \phi_{1}(f).\] Hence we can conclude from the last statement in Proposition \ref{basic_properties} that $\phi_{1}(f) \in \mathcal{D}\left(SL(2,\mathbb{R})\right)$ and we have \[\|\phi_{1}(f)\|_{\mathcal{D}\left(SL(2,\mathbb{R})\right)} \leq A\|f\|_{\mathcal{S}_{0}(\mathbb{R})},\] where the constant $A > 0$ does not depend on $f \in \mathcal{S}_{0}(\mathbb{R})$. If $f \in \mathcal{S}_{0}(\mathbb{R})$ with $\mathcal{C}[f] \subset [n-k,n+k]$ for $n \in \mathbb{Z}$ and $k \in \mathbb{N}$ then \[\mathcal{C}\left[\phi_{1}(f)\right] \subset (0,2\pi) \times [n-k,n+k] \times \left[\frac{1}{2},1\right],\] with respect to the Iwasawa decomposition. Hence $\phi_{1}$ satisfies (\ref{geometric_condition}) since the map \[\mathbb{Z} \ni n \longmapsto \begin{pmatrix} 1 & n \\ 0 & 1\end{pmatrix} = \begin{pmatrix} 1 & 1 \\ 0 & 1\end{pmatrix}^{n} \in SL(2,\mathbb{R})\] is a quasi-isometric embedding by Proposition \ref{hyperbolicity_lemma} (c). \par Finally, the map $\phi_{1}$ is injective since the bump function $\eta$ is assumed to be non-zero. Thus $\phi_{1}$ is a geometric embedding. It is not a geometric isomorphism since the image of $\phi_1$ does not contain any function that depends on the variable $\theta$ with respect to the Iwasawa decomposition. Moreover, $\phi_{1}$ is not a spatially implemented geometric embedding since $\mathbb{Z}$ is not quasi-isometric to $\mathbb{H}^2$ as they have different asymptotic dimension.
\end{proof}

Since every stratified Lie group $G$ is diffeomorphic to $\mathbb{R}^n$ for some $n \in \mathbb{N}$ we can identify the uniform covering $\mathcal{U}(G)$ with a covering on $\mathbb{R}^n$ where the Fourier transform makes sense. Hence we can consider the decomposition space $\mathcal{D}^{\mathcal{F}}(G,L^p,l^q)$. Both $M^{p,q}(\mathbb{R}^n)$ and $E^{p,q}(\mathbb{H}_{2m+1})$ are particular examples in this class, and one might refer to them as $\mathcal{F}$\textit{-type stratified decomposition spaces}. \par 
In the case where the stratified Lie group is realizable over the rationals, we know from Theorem \ref{growth_vector_result} that the uniform metric space $(G,d_{\mathcal{U}})$ is quasi-isometric to a finitely generated group $N$ with polynomial growth type. Hence the argument used in the first part of the proof of Proposition \ref{final_result} carries through to show that $N$ is not hyperbolic unless $N$ is quasi-isometric to $\mathbb{Z}$. This is only possible for $G = \mathbb{R}$, so $M^{p,q}(\mathbb{R})$ is the only $\mathcal{F}$-type stratified decomposition space built on a quasi-hyperbolic covering. \par 
Thus a straightforward extension of Proposition \ref{final_result} shows that there are no geometric embeddings from $\mathcal{D}^{\mathcal{F}}(G,L^p,l^q)$ to $\mathcal{D}^{p,q}(SL(2,\mathbb{R}))$ or vice versa when $G$ is a stratified Lie group realizable over the rationals that is not $\mathbb{R}$. In particular, this holds for the $\mathcal{F}$-type stratified decomposition space where the stratified Lie group is the Engel group introduced in Example \ref{Engel_group}. Showing statements such as these without using invariants from large scale geometry seems highly non-trivial and highlights the usefulness of our approach.  


\Addresses

\end{document}